\definecolor{darkblue}{rgb}{0, 0, .4}
\title{Well-quasi-ordering and finite distinguishing number}
\author{Aistis Atminas\\[10pt]
\small Department of Mathematics\\
\small London School of Economics\\
\small London, WC2A 2AE\\
\small United Kingdom
\and
Robert Brignall\\[10pt]
\small Department of Mathematics and Statistics\\
\small The Open University\\
\small Milton Keynes, MK7 6AA\\
\small United Kingdom}
\date{May 3, 2019}
\newcommand{\A}{\mathcal{A}}
\newcommand{\B}{\mathcal{B}}
\newcommand{\C}{\mathcal{C}}
\newcommand{\X}{\mathcal{X}}
\newcommand{\F}{\mathcal{F}}
\newcommand{\FF}{\mathfrak{F}}
\renewcommand{\P}{\mathcal{P}}
\DeclareMathOperator{\free}{Free}
\theoremstyle:=definition,remark,plain\do{%
        \expandafter\g@addto@macro\csname th@\theoremstyle\endcsname{%
            \addtolength\thm@preskip\parskip
            }%
        }
\theoremstyle{plain}
\newtheorem{theorem}{Theorem}[section]
\newtheorem{question}[theorem]{Question}
\newtheorem{lemma}[theorem]{Lemma}
\newtheorem{definition}[theorem]{Definition}
\newtheorem{conjecture}[theorem]{Conjecture}
\newtheorem{corollary}[theorem]{Corollary}
\newtheorem{problem}[theorem]{Problem}
\newtheorem{algo}[theorem]{Algorithm}
\newtheorem{proposition}[theorem]{Proposition}
\long\def\probl#1#2{~\par
\begin{compactitem}
\item[\textsc{Input:}]
#1
\item[\textsc{Output:}]
#2
\end{compactitem}}
\tikzstyle{every node}=[circle, draw, fill=black,
\begin{document}
\maketitle
\begin{abstract}
Balogh, Bollob\'as and Weinreich showed that a parameter that has since been termed the \emph{distinguishing number} can be used to identify a jump in the possible speeds of hereditary classes of graphs at the sequence of Bell numbers.  

We prove that every hereditary class that lies above the Bell numbers and has finite distinguishing number contains a boundary class for well-quasi-ordering. This means that any such hereditary class which in addition is defined by finitely many minimal forbidden induced subgraphs must contain an infinite antichain. As all hereditary classes below the Bell numbers are well-quasi-ordered, our results complete the answer to the question of well-quasi-ordering for hereditary classes with finite distinguishing number. 

We also show that the decision procedure of Atminas, Collins, Foniok and Lozin to decide the Bell number (and which now also decides well-quasi-ordering for classes of finite distinguishing number) has run time bounded by an explicit (quadruple exponential) function of the order of the largest minimal forbidden induced subgraph of the class.

%In proving this, we establish several other results. First, we show that the algorithm given by Atminas, Collins, Foniok and Lozin to decide whether a class lies above or below the Bell numbers can be made deterministic. Second, via a structural characterisation of every hereditary property whose speed is below the Bell numbers, we show that every such property is well-quasi-ordered by the labelled induced subgraph relation, and hence is finitely defined. Third, by relating Nash-Williams' `minimal bad sequence' construction from 1963 to the `boundary classes' of Korpelainen, Lozin and Razgon from 2013, we demonstrate that every hereditary property with finite distinguishing number above the Bell numbers either contains a minimal bad sequence, or is a boundary class for well-quasi-ordering.
\end{abstract}

\section{Introduction}

Whereas the question of whether a downset is well-quasi-ordered or not is answered completely for the graph minor ordering (due to Robertson and Seymour~\cite{robertson:graph-minors-i-xx:}) and for the subgraph ordering (due to Ding~\cite{ding:subgraphs-and-w:}), for the induced subgraph ordering the question remains largely open, as the interface between well-quasi-ordering and not seems highly complex. Nevertheless, of the three orderings, classes of graphs closed under taking induced subgraphs are the only ones that can contain arbitrarily large graphs with high edge density, which makes this particular ordering indispensable.

A \emph{hereditary property}, or (throughout this paper) \emph{class} of graphs, is a collection of graphs that is closed under the induced subgraph ordering. That is, if $\C$ is a class, $G\in \C$ and $H$ is an induced subgraph of $G$, then $H$ must also lie in $\C$. Many natural collections of graphs form classes, for example perfect graphs and chordal graphs are both hereditary properties of graphs.

A common -- and computationally convenient -- way to describe graph classes is via their set of \emph{minimal forbidden graphs}. While some classes (such as perfect graphs and chordal graphs) have infinitely many minimal forbidden graphs, for others (such as cographs) this set is finite, in which case we say that the class is \emph{finitely defined}. For brevity, since we will always be dealing with minimal forbidden graphs, we will henceforth drop the word `minimal' and instead simply refer to `forbidden graphs.' 

The collection of forbidden graphs of a class is an antichain in the induced subgraph ordering (that is, no graph is an induced subgraph of another). Since there are non-finitely-defined classes, we see that the set of all finite graphs with the induced subgraph relation is not a well-quasi-order. However, we may still ask whether a class (having at least one forbidden graph) is \emph{well-quasi-ordered}, i.e.\ (since all classes are well-founded) if it contains no infinite antichains. Well-quasi-ordering in graph classes has received considerable attention, see, for example~\cite{daligault:well-quasi-orde:,damaschke:induced-subgrap:,ding:on-canonical-antichains:,korpelainen:two-forbidden:}. 

Classes that are well-quasi-ordered often possess a stronger property: well-quasi-ordering by the \emph{labelled} induced subgraph relation. Formally speaking, let $(W,<)$ be a quasi-order of labels, and consider graphs whose vertices are labelled by elements of $W$. We say that $H$ is a \emph{labelled induced subgraph} of $G$ if there is an isomorphism from $H$ to an induced subgraph of $G$ in which  each $v\in V(H)$ is mapped to some $u\in V(G)$ such that the label of $v$ is less than or equal to the label of $u$ in the ordering on $W$. A class of (unlabelled) graphs $\C$ is \emph{labelled well-quasi-ordered} if whenever $(W,<)$ is a well-quasi-order, the collection of labelled graphs from $\C$ contains no infinite antichain in the labelled induced subgraph ordering. 

In recent years, labelled well-quasi-ordering has emerged over unlabelled well-quasi-ordering in significance. For example Daligault, Rao and Thomass\'e~\cite{daligault:well-quasi-orde:} conjectured that labelled well-quasi-ordered classes must have bounded `clique-width'. This conjecture remains open (and is still widely believed), but it was shown recently~\cite{lozin:wqo-vs-cw} that the corresponding question for unlabelled well-quasi-ordering is false.

Given a graph class $\C$, the \emph{speed} of $\C$ is the sequence $(|\C^n|)_{n\geq1}$, where $\C^n$ denotes the collection of graphs in $\C$ on the vertex set $\{1,2,\dots,n\}$. Independently, Scheinerman and Zito~\cite{scheinerman:on-the-size:} and Alekseev~\cite{alekseev:range-of-values:}, observed that there were a number of discrete layers in the range of possible speeds. From `slowest' to `fastest', these are constant, polynomial, exponential, factorial and superfactorial.

Balogh Bollob\'as and Weinreich~\cite{balogh:the-speed-of-he:} showed that the factorial layer (in which $|\C^n| = n^{\theta(n)}$) can further be divided into two by the \emph{Bell numbers}, $\B_n$, which count the number of partitions of a set of size $n$. We say that a class $\C$ is \emph{above the Bell numbers} if there exists $N\geq 1$ such that $|\C^n| \geq \B_n$ for all $n\geq N$, and below otherwise. In~\cite{balogh:the-speed-of-he:} it was shown that all classes below the Bell numbers are of constant, polynomial or exponential speed, or there exists $k$ such that $|\C^n| = n^{(1-1/k+o(1))n}$. By contrast, all classes above the Bell numbers have speeds $|\C^n|\geq n^{(1+o(1))n}$.

In studying this dichotomy at the Bell numbers, the authors of~\cite{balogh:a-jump-to-the-bell:} developed a concept which we will call the `distinguishing number'. We postpone the details until later, but every class below the Bell numbers has finite distinguishing number, while there are 13 minimal classes above the Bell numbers that have infinite distinguishing number. Building on this, Atminas, Collins, Foniok and Lozin~\cite{atminas:deciding-the-bell:} describe a procedure to determine whether a finitely defined class lies above or below the Bell numbers. However, although the procedure is guaranteed to halt on any finite input, neither the theory underpinning the procedure in~\cite{atminas:deciding-the-bell:} nor the extremal arguments in~\cite{balogh:a-jump-to-the-bell:} can be used directly to obtain explicit bounds on the running time of the algorithm. 

Our main result comprises two distinct parts, and is as follows.

\begin{theorem}\label{thm-section-one-statement}
Let $\C$ be a class of graphs with finite distinguishing number. Then $\C$ is labelled well-quasi-ordered if and only if $\C$ is below the Bell numbers. Moreover, if $\C$ is finitely defined and above the Bell numbers, then it is not well-quasi-ordered. 

The running time $r=r(m)$ of the procedure to decide whether $\C$ lies above or below the Bell numbers satisfies $\log\log\log\log \left(r(m)\right) \leq m^{1+o(1)}$, where $m$ is the order of the largest forbidden graph of $\C$.
\end{theorem}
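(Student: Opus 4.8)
The plan is to treat the theorem as three linked statements: (a) if $\C$ lies below the Bell numbers then it is labelled well-quasi-ordered; (b) if $\C$ has finite distinguishing number and lies above the Bell numbers then it contains a boundary class for well-quasi-ordering, and in particular is not labelled well-quasi-ordered (and not well-quasi-ordered when finitely defined); and (c) the Atminas--Collins--Foniok--Lozin decision procedure runs in time bounded in terms of the order of the largest forbidden graph of $\C$. Parts (a) and (b) together give the biconditional, since labelled well-quasi-ordering implies well-quasi-ordering.

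For (a) I would start from the structural description of classes below the Bell numbers due to Balogh, Bollob\'as and Weinreich: every graph in such a class is, up to a bounded-size modification, a ``linear'' arrangement of boundedly many cliques and independent sets following a fixed finite rule. A class admitting a decomposition of this shape is encodable over a finite alphabet, so a minimal-bad-sequence argument (equivalently, Higman's lemma applied to the resulting words) shows it is well-quasi-ordered; enlarging the alphabet to carry the labels of an arbitrary well-quasi-order $(W,<)$ keeps the encoding finite, and the same argument then gives labelled well-quasi-ordering, upgrading the unlabelled statement already recorded in the literature.

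Part (b) is the crux. I would use the characterisation behind the Atminas--Collins--Foniok--Lozin procedure to show that, because $\C$ has finite distinguishing number, the only way for it to lie above the Bell numbers is to contain, for every $n$, a graph realising a canonical ``two-dimensional'' obstruction: roughly, a blow-up of a fixed small pattern with unboundedly many layers across which adjacency genuinely varies, in contrast to the purely linear chains permitted below the Bell numbers. (The finite-distinguishing-number hypothesis is precisely what pins us to this situation; it excludes the thirteen minimal classes above the Bell numbers that have infinite distinguishing number, which would otherwise require a separate treatment.) After a Ramsey-type cleaning that stabilises the pattern and the inter-layer adjacency types along an infinite subfamily of these obstructions, I would extract an infinite antichain, and likewise an infinite antichain of $2$-labelled graphs lying in $\C$, the latter already showing that $\C$ is not labelled well-quasi-ordered. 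Taking the hereditary closure of the antichain and intersecting along its tail produces a limit class contained in $\C$, and a minimal limit class inside it is the desired boundary class $\X$. When $\C$ is in addition finitely defined, the standard fact that a finitely defined class is well-quasi-ordered exactly when it contains no boundary class for well-quasi-ordering yields the ``Moreover'' clause. I expect the construction and verification of this antichain --- arranging the inter-layer variation so that it genuinely prevents one obstruction from embedding into another, and establishing minimality of the resulting limit class --- to be the main obstacle of the whole argument.

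For (c) I would prove a bounded-witness lemma: if every graph in $\F$ has at most $m$ vertices, then $\free(\F)$ lies above the Bell numbers if and only if it already contains one of the canonical obstructions of part (b) of order at most an explicit $g(m)$. The ``if'' direction is immediate; the ``only if'' direction is a compactness argument, since a large obstruction in $\free(\F)$ can be trimmed to a sub-obstruction of bounded order that still avoids every $F\in\F$, each such $F$ being able to meet only a bounded window of it. Granting the lemma, the Atminas--Collins--Foniok--Lozin procedure --- whose iterations only ever manipulate configurations whose size is controlled by the witnesses being searched for --- halts after a number of steps, and with intermediate objects, bounded by a function of $m$ alone; bounding the sizes of those intermediate objects, rather than merely counting the iterations, is the delicate point here. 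Assembling (a), (b) and (c) completes the proof.
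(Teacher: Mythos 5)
Your three-part architecture matches the paper's, and part (a) is essentially the paper's route (the Balogh--Bollob\'as--Weinreich structure theorem makes every graph in a below-Bell class $k$-uniform for bounded $k$ after deleting boundedly many vertices, and $k$-uniform graphs are labelled well-quasi-ordered by an encoding-plus-Higman argument due to Korpelainen and Lozin). The gaps are in (b) and (c). In (b), the step you flag as ``the main obstacle'' is not merely hard but, as you state it, impossible: you propose to extract from any above-Bell class $\C$ with finite distinguishing number an infinite unlabelled antichain of obstructions lying in $\C$. But $\C=\P(w,H)$ is itself such a class and is well-quasi-ordered, so it contains no infinite antichain whatsoever; the unmodified obstructions (initial segments of the $\P(w,H)$ construction) form a chain, not an antichain. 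The antichain must live \emph{outside} the minimal class: the paper takes a long segment $G_\ell$ corresponding to a prefix of the form $aba$ and complements the single adjacency between its first and last vertices, so that its sparsification becomes a cycle while every graph in $\P(w,H)$ sparsifies to a path; Lemma~\ref{embedding} (embeddings of strong $(\ell,d)$-graphs descend to embeddings of their sparsifications) then shows these modified graphs are minimal forbidden graphs of $\P(w,H)$ and pairwise incomparable, and only the hypothesis that $\C$ is finitely defined with forbidden graphs of bounded order forces all but finitely many of them back into $\C$. This is precisely why the unlabelled conclusion carries the ``finitely defined'' hypothesis while the labelled one does not. (Your direct labelled antichain of endpoint-marked segments is fine and was the authors' original route; the published paper instead invokes Daligault--Rao--Thomass\'e to dispose of non-finitely-defined classes.)

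In (c) your bounded-witness lemma attacks the wrong bound. Trimming a large witness to a bounded window essentially bounds the period of $w$ once $H$ is fixed --- that is the pigeonhole argument of Theorem~\ref{periodic}. What the Atminas--Collins--Foniok--Lozin procedure actually lacks, and what the paper supplies, is a bound on the order of $H$, i.e.\ on $\ell_\C$, in terms of $m$: without it the list of candidate classes $\P(w,H)$ is not known to be finite as a function of $m$. This bound cannot be obtained by trimming (restricting $w$ to a subalphabet destroys the consecutive-position edge rule in the definition of $G_{w,H}$); it requires the quantitative ``finite version'' of the thirteen-minimal-classes theorem (Theorem~\ref{thm:infinitetofinite}), a genuine extremal argument combining the bipartite unavoidable-structures function $f(p,q,r)$, four-colour Ramsey numbers and Cook's lemma on uniformising large bags. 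Your compactness sketch does not engage with this, so the claimed run-time bound is not established.
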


Note that one half of the first part of Theorem~\ref{thm-section-one-statement} (that classes below the Bell numbers are labelled well-quasi-ordered) is readily deduced from the literature, by combining a structural result due to Balogh, Bollob\'as and Weinreich~\cite[Theorem 30]{balogh:the-speed-of-he:} with a result relating this structure to well-quasi-ordering in Korpelainen and Lozin~\cite[Theorem 2]{korpelainen:two-forbidden:}. For completeness, we have included a direct proof of this direction in Appendix~\ref{subsec-belowbell}.

In proving Theorem~\ref{thm-section-one-statement}, we establish a number of other results. First, Theorem~\ref{thm:min-boundary} is a general result that shows `minimal' antichains (equating to the `minimal bad sequences' of Nash-Williams~\cite{nash-williams:on-well-quasi-o:}) and `boundary classes' (see Korpelainen and Lozin~\cite{korpelainen:boundary-properties:}) refer to the same objects. Second, Theorem~\ref{thm:pwh-boundary} identifies infinitely many minimal antichains of graphs (or, equivalently, boundary classes), lying (minimally) above the Bell numbers. Finally, in Theorem~\ref{periodic} we adapt the extremal arguments used by Balogh, Bollob\'as and Weinreich in~\cite{balogh:the-speed-of-he:,balogh:a-jump-to-the-bell:} in order to obtain explicit bounds on the run time of the existing procedure from~\cite{atminas:deciding-the-bell:}. Specifically, our Theorem~\ref{thm:infinitetofinite} can be thought of as a `finite' version of Theorem~20 from~\cite{balogh:a-jump-to-the-bell:} (presented as Theorem~\ref{thm:infinite} in this work), and the approach we take to complete the proof is comparable, although rather more intricate.

The rest of this paper is organised as follows. In Section~\ref{preliminaries} we define the distinguishing number and the characterisation of classes with infinite distinguishing number. Section~\ref{sec-antichains} contains the (general and self-contained) result that relates minimal antichains to boundary classes. Section~\ref{5.5} establishes that the boundary for well-quasi-ordering coincides with the Bell numbers. Section~\ref{sec-bound} contains the required (largely Ramsey-theoretic) arguments to provide the bounds on the decision procedure. Finally, we present a number of concluding remarks and future directions in Section~\ref{sec-conclusion}.

\section{Distinguishing number and the Bell dichotomy}\label{preliminaries}

For a graph $G$ and a set $X=\{v_1, \dotsc, v_t\} \subseteq V(G)$,
we say that the pairwise disjoint subsets $U_1$,~\dots,~$U_m$ of~$V(G)$ are
\emph{distinguished} by~$X$ (or $X$ \emph{distinguishes} $U_1,\dots, U_m$) if for each~$i$, all vertices of~$U_i$
have the same neighbourhood in $X$, and for each $i \neq j$,
vertices $x \in U_i$ and $y \in U_j$ have different neighbourhoods in~$X$. (Note that we do not require that $X$ is disjoint from the sets $U_i$.)

Given a graph $G$, clearly, for any set $X\subseteq V(G)$ one can distinguish at most $2^{|X|}$ sets, but not all of these necessarily contain a large number of vertices. In order to establish the ``jump'' from classes whose speeds are below the Bell numbers to those whose speeds are above, the authors of \cite{balogh:the-speed-of-he:,balogh:a-jump-to-the-bell:} identify a parameter for graph classes which, roughly speaking, identifies whether the class contains graphs with arbitrarily many sets each of arbitrary cardinality that are distinguished by some $X\subseteq V(G)$.

More precisely, the \emph{distinguishing number}\footnote{Note that the term `distinguishing number' is commonly used for a graph parameter that relates to vertex labellings preserved under automorphisms. We will not need this notion here, so have kept the terminology used in earlier articles, such as~\cite{atminas:deciding-the-bell:}.} of a class $\C$, denoted $k_\C$, is defined as follows:
\begin{compactenum}[(a)]
\item
If for all $k, \ell \in \mathbb{N}$ we can find a graph $G \in\C$ that admits
some $X \subset V(G)$ distinguishing at least $\ell$~sets, each  of size at least~$k$,
then we say that the distinguishing number is infinite, and write $k_{\C}=\infty$.
\item
Otherwise, there exists a pair $(k, \ell)$ such that for any graph $G\in \C$, any subset $X\subset V(G)$ distinguishes at most $\ell$~sets each of size at least~$k$.
We define $k_{\C}$ to be the minimum value of~$k$ in all such pairs.
\end{compactenum}

When $k_\C < \infty$, we set $\ell_\C$ to be the largest $\ell$ such that for every $k$ there exists $G\in\C$ and a set $X\subseteq V(G)$ which distinguishes $\ell$ sets, each of size at least $k$. In other words, $\ell_\C$ measures how many arbitrarily large sets can be distinguished in graphs in $\C$.

\paragraph{Infinite distinguishing number}
When a class $\C$ has infinite distinguishing number ($k_\C=\infty$), then \cite[Theorem 20]{balogh:a-jump-to-the-bell:} shows that $|\C_n|\geq \mathcal{B}_n$, the $n$th Bell number. In fact, there are precisely 13 minimal classes with $k_\C=\infty$, illustrated in Figure~\ref{fig:uni}: 
\begin{description}
\item[$\X_1$:] every graph is a disjoint union of cliques.
\item[$\X_2,\X_3,\X_4,\X_5$:] every graph can be partitioned into two sets $V_1$ and $V_2$, such that every $v\in V_2$ has at most one neighbour in $V_1$. For $\X_2$, $V_1$ and $V_2$ are both independent sets; in $\X_3$, $V_1$ is an independent set and $V_2$ a clique; $\X_4$ has $V_1$ a clique and $V_2$ an independent set; and in $\X_5$ both $V_1$ and $V_2$ are cliques.

\item[$\X_6,\X_7$:] every graph can be partitioned into two sets $V_1$ and $V_2$, such that the vertices in $V_1$ can be linearly ordered by the inclusion of their neighbourhoods in $V_2$. For $\X_6$, $V_1$ and $V_2$ are both independent sets; for $\X_7$, $V_1$ is an independent set and $V_2$ is a clique.
\item[$\X_8,\dots,\X_{13}$:] The classes of the complements of the graphs in $\X_1,\dots,\X_6$, respectively. (Note that $\X_7=\overline{\X_7}$.)
\end{description}

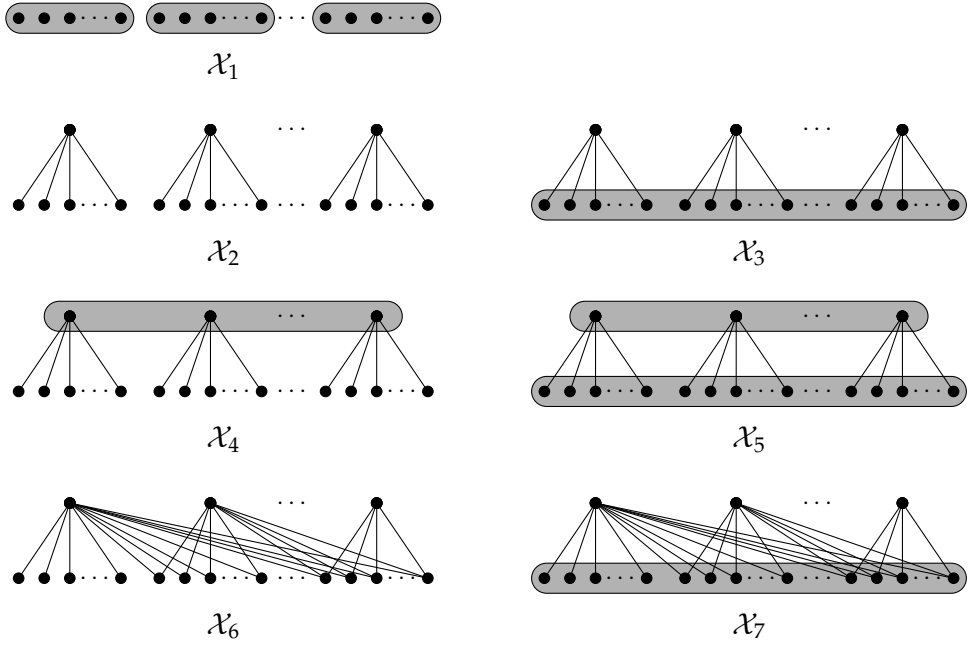
\begin{figure}[t]
\begin{center}
\begin{tabular}{ccc}
\begin{tikzpicture}[xscale=1.7]
\foreach \x in {0,1.1,2.4}
{
	\draw[fill=black!30, rounded corners=2mm] (\x-0.5,-0.2) rectangle (\x+0.5,0.2);
	\foreach \xx in {-0.4,-0.2,0,0.4}
		\node at (\x+\xx,0) {};
	\node[draw=none,fill=none] at (\x+0.21,0) {\footnotesize $\cdots$};		
}
\node[draw=none,fill=none] at (1.75,0) {\footnotesize $\cdots$};	
\end{tikzpicture}
\\
$\X_1$\\[10pt]
\begin{tikzpicture}[xscale=1.7]
\foreach \x in {0,1.1,2.4}
{
	\foreach \xx in {-0.4,-0.2,0,0.4}
		\draw (\x,1) node {} -- (\x+\xx,0) node {};
	\node[draw=none,fill=none] at (\x+0.21,0) {\footnotesize $\cdots$};		
}
\node[draw=none,fill=none] at (1.75,0) {\footnotesize $\cdots$};	
\node[draw=none,fill=none] at (1.75,1) {\footnotesize $\cdots$};	
\end{tikzpicture}
&\rule{10pt}{0pt}&
\begin{tikzpicture}[xscale=1.7]
\draw[fill=black!30, rounded corners=2mm] (-0.5,-0.2) rectangle (2.9,0.2);
\foreach \x in {0,1.1,2.4}
{
	\foreach \xx in {-0.4,-0.2,0,0.4}
		\draw (\x,1) node {} -- (\x+\xx,0) node {};
	\node[draw=none,fill=none] at (\x+0.21,0) {\footnotesize $\cdots$};		
}
\node[draw=none,fill=none] at (1.75,0) {\footnotesize $\cdots$};	
\node[draw=none,fill=none] at (1.75,1) {\footnotesize $\cdots$};	
\end{tikzpicture}
\\
$\X_2$&&$\X_3$\\[10pt]
\begin{tikzpicture}[xscale=1.7]
\draw[fill=black!30, rounded corners=2mm] (-0.2,0.8) rectangle (2.6,1.2);
\foreach \x in {0,1.1,2.4}
{
	\foreach \xx in {-0.4,-0.2,0,0.4}
		\draw (\x,1) node {} -- (\x+\xx,0) node {};
	\node[draw=none,fill=none] at (\x+0.21,0) {\footnotesize $\cdots$};		
}
\node[draw=none,fill=none] at (1.75,0) {\footnotesize $\cdots$};	
\node[draw=none,fill=none] at (1.75,1) {\footnotesize $\cdots$};	
\end{tikzpicture}
&\rule{10pt}{0pt}&
\begin{tikzpicture}[xscale=1.7]
\draw[fill=black!30, rounded corners=2mm] (-0.2,0.8) rectangle (2.6,1.2);
\draw[fill=black!30, rounded corners=2mm] (-0.5,-0.2) rectangle (2.9,0.2);
\foreach \x in {0,1.1,2.4}
{
	\foreach \xx in {-0.4,-0.2,0,0.4}
		\draw (\x,1) node {} -- (\x+\xx,0) node {};
	\node[draw=none,fill=none] at (\x+0.21,0) {\footnotesize $\cdots$};		
}
\node[draw=none,fill=none] at (1.75,0) {\footnotesize $\cdots$};	
\node[draw=none,fill=none] at (1.75,1) {\footnotesize $\cdots$};	
\end{tikzpicture}
\\
$\X_4$&&$\X_5$\\[10pt]
\begin{tikzpicture}[xscale=1.7]
\foreach \x in {0,1.1,2.4}
{
	\foreach \xx in {-0.4,-0.2,0,0.4}
		\draw (\x,1) node {} -- (\x+\xx,0) node {};
	\node[draw=none,fill=none] at (\x+0.21,0) {\footnotesize $\cdots$};		
}
\foreach \xx in {-0.4,-0.2,0,0.4}
{
	\draw (0,1) -- (1.1+\xx,0);
	\draw (0,1) -- (2.4+\xx,0);
	\draw (1.1,1) -- (2.4+\xx,0);
}
\node[draw=none,fill=none] at (1.75,0) {\footnotesize $\cdots$};	
\node[draw=none,fill=none] at (1.75,1) {\footnotesize $\cdots$};	
\end{tikzpicture}
&\rule{10pt}{0pt}&
\begin{tikzpicture}[xscale=1.7]
\draw[fill=black!30, rounded corners=2mm] (-0.5,-0.2) rectangle (2.9,0.2);
\foreach \x in {0,1.1,2.4}
{
	\foreach \xx in {-0.4,-0.2,0,0.4}
		\draw (\x,1) node {} -- (\x+\xx,0) node {};
	\node[draw=none,fill=none] at (\x+0.21,0) {\footnotesize $\cdots$};		
}
\node[draw=none,fill=none] at (1.75,0) {\footnotesize $\cdots$};	
\node[draw=none,fill=none] at (1.75,1) {\footnotesize $\cdots$};	
\foreach \xx in {-0.4,-0.2,0,0.4}
{
	\draw (0,1) -- (1.1+\xx,0);
	\draw (0,1) -- (2.4+\xx,0);
	\draw (1.1,1) -- (2.4+\xx,0);
}
\end{tikzpicture}
\\
$\X_6$&&$\X_7$\\
\end{tabular}
\end{center}
\caption{Typical representatives from the minimal classes $\X_1,\dots,\X_7$ with infinite distinguishing number (grey regions indicate cliques). The other six minimal classes are formed by complementation, $\X_8=\overline{\X_1}, \dots, \X_{13}=\overline{\X_6}$ (note that $\X_7 = \overline{\X_7}$).}
\label{fig:uni}
\end{figure}

\begin{theorem}[{Balogh, Bollob\'as and Weinreich~\cite[Theorem 20]{balogh:a-jump-to-the-bell:}}]
\label{thm:infinite}
Let $\C$ be a class with $k_\C=\infty$.
Then $\C$ contains one or more of the (minimal) classes of graphs $\X_1,\X_2,\dots,\X_{13}$.
\end{theorem}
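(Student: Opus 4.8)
The plan is to begin with the large distinguished sets supplied by $k_\C=\infty$ and, through a chain of Ramsey-type reductions --- each of which only passes to an induced subgraph of the ambient graph, and hence stays inside $\C$ --- to peel off a \emph{canonical} configuration. There are only finitely many such configurations, and each $\X_t$ is exactly the hereditary closure of one explicit sequence of canonical graphs: $\X_1$ of the graphs $mK_m$; $\X_8=\overline{\X_1}$ of the complete $m$-partite graphs with parts of size $m$; $\X_2$ of the disjoint unions of $m$ copies of $K_{1,m}$; $\X_6$ of the graphs in which an independent set $x_1,\dots,x_m$ sits above independent, mutually non-adjacent blocks $W_0,\dots,W_m$ of size $m$ with $x_i$ adjacent to all of $W_j$ exactly when $i\le j$; and so on. Hence it is enough to prove: for every $m$, the class $\C$ contains an arbitrarily large canonical graph of one fixed type.

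Fix, for every $(k,\ell)$, a graph $G=G_{k,\ell}\in\C$ with a set $X$ distinguishing $U_1,\dots,U_\ell$, each $|U_i|\ge k$; crucially we are free to choose $k$ astronomically large relative to $\ell$, which will pay for all the shrinking to follow. Deleting from each $U_i$ the (at most $|X|$) vertices of $U_i$ lying in $X$, we may assume $X\cap U_i=\emptyset$ for all $i$; and replacing $X$ by a minimal subset still separating the distinct traces $T_i:=N(U_i)\cap X$, we may assume $|X|<\ell$. Now I would proceed in stages. (i) By Ramsey's theorem, shrink each $U_i$ to a still-large homogeneous (clique or independent) subset; by pigeonhole a constant fraction of the $U_i$ share a homogeneity type, and complementing $G$ if necessary --- which preserves the entire set-up and merely permutes $\X_1,\dots,\X_{13}$ --- we may assume every $U_i$ is an independent set. (ii) Homogenise all $\binom{\ell}{2}$ pairs of blocks simultaneously: process the pairs one at a time, each time applying a bipartite-Ramsey argument and shrinking the two blocks involved; this never destroys homogeneity already obtained, since a subpair of a homogeneous pair is homogeneous. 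We are left with still-large blocks $U_i'$ such that any two of them induce a complete or an empty bipartite graph. Let $R$ be the reduced graph on $\{1,\dots,\ell\}$ with $ij\in E(R)$ iff $U_i'$ and $U_j'$ are complete to one another, and apply Ramsey once more to restrict to an unbounded sub-collection of blocks on which $R$ is complete or empty. If $R$ is complete then $\bigcup_i U_i'$ is a complete multipartite graph with unboundedly many parts each of unbounded size, giving one of the required canonical graphs (of type $\X_8$ or its complement $\X_1$); so assume $R$ is empty, i.e.\ $Y:=\bigcup_i U_i'$ is an independent set.

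(iii) It remains to handle the edges between $X$ and $Y$ and those inside $X$. A routine (if fiddly) Ramsey argument on the family of distinct traces --- finding, in any large family of distinct subsets of a set, an unbounded subfamily that, after restricting the ground set, is a matching, a co-matching, or a chain --- produces a subset $X_0\subseteq X$ and an unbounded sub-collection of blocks whose traces on $X_0$ are, after relabelling, one of: singletons $\{x_i\}$, co-singletons $X_0\setminus\{x_i\}$, or prefixes $\{x_1,\dots,x_i\}$ (so that the $x$'s are linearly ordered by their neighbourhoods among the blocks). Finally, applying Ramsey to the vertex set $X_0$ itself and retaining the corresponding blocks, we may assume $X_0$ is a clique or an independent set. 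The induced subgraph of $G$ on $X_0$ together with the surviving blocks is then a large canonical graph whose class is determined by three choices: $V_1=X_0$ is a clique or independent, $V_2=Y$ is independent, and the trace shape is matching, co-matching, or chain. Reading the six combinations off against the definitions yields $\X_2,\X_4$ in the matching cases (according as $X_0$ is independent or a clique), $\X_{12},\X_{10}$ in the co-matching cases, and $\X_6,\X_7$ in the chain cases --- alongside the $\X_8/\X_1$ from stage (ii). Carrying the whole argument out along $\ell=n\to\infty$, with $k=k(n)$ chosen large enough that after all the reductions the surviving blocks have size tending to infinity (the number of surviving blocks also tends to infinity), and pigeonholing over the finitely many possible outcomes --- including whether or not we complemented in stage (i) --- some fixed outcome recurs for infinitely many $n$. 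For that outcome $\C$ contains arbitrarily large canonical graphs of a fixed class from $\{\X_1,\dots,\X_{13}\}$, or of the complement of one; and since $\{\X_1,\dots,\X_{13}\}$ is closed under complementation, $\C$ contains one of $\X_1,\dots,\X_{13}$.

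The main obstacle is stage (ii): turning ``each pair of blocks can be cleaned separately'' into a simultaneous cleaning of all $\binom{\ell}{2}$ pairs, with enough quantitative control that, starting from $|U_i|\ge k$ with $k$ chosen appropriately large in terms of $\ell$, the surviving blocks are still unbounded in size. The canonical-trace step in (iii) is of the same flavour but more elementary, and the remaining bookkeeping --- tracking how the number of blocks and their sizes decay through each reduction --- is routine.
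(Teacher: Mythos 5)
This statement is quoted from Balogh--Bollob\'as--Weinreich and the paper gives no proof of it; the closest in-house analogue is the quantitative Theorem~\ref{thm:infinitetofinite} together with Lemma~\ref{lem:universal}, from which the infinite version follows by pigeonhole. Your argument is essentially that same chain of reductions --- Ramsey inside the blocks, simultaneous uniformisation of block pairs (Lemma~\ref{lem:uniform-bags}), the matching/co-matching/half-graph trichotomy for the distinct traces (Proposition~\ref{prop:f-well-defined}, which you rightly identify as the one genuinely non-routine ingredient, and which both you and the paper treat as a known lemma), and a final Ramsey-plus-pigeonhole step --- carried out in a different order and qualitatively rather than with explicit bounds, and it is correct.
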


Atminas, Collins, Foniok and Lozin~\cite{atminas:deciding-the-bell:} show that one can readily determine (in polynomial time) whether a given finitely defined graph class $\C=\free(G_1,\dots,G_s)$ contains any of the classes $\X_1,\dots,\X_{13}$. All such classes lie (minimally) above the Bell numbers, so in order to establish whether a finitely defined class lies above or below the Bell numbers we now restrict our attention to classes with finite distinguishing number.

\paragraph{Finite distinguishing number}
For classes with $k_\C<\infty$, the distinction between classes below and above the Bell numbers is more subtle. Building on a basic structural description (which we will introduce in Section~\ref{5.5}) of such classes in \cite{balogh:a-jump-to-the-bell:}, Atminas et al~\cite{atminas:deciding-the-bell:} show that the minimal classes above the Bell numbers with $k_\C<\infty$ are characterised by $\P(w,H)$ classes, which we now review.

A \emph{word} $w$ over an alphabet $A$ is a (possibly infinite) sequence $w=w_1w_2w_3\cdots $ of letters $w_i\in A$. The \emph{length}, $|w|$ of a word is the number of symbols it contains. We say that a finite word $u=u_1u_2\cdots u_m$ is a \emph{factor} of $w=w_1w_2\cdots $ if there exists an index $a$ such that $u_i = w_{a+i}$ for all $1\leq i\leq m$. In the case where $u$ is a factor of $w$ with index $a=0$, we say that $u$ is an \emph{initial segment} of $w$.

%Now let $H$ be an undirected graph with loops allowed and with vertex set $V(H)=A$, and let 
%$w$ be a (finite or infinite) word over the alphabet~$A$. The hereditary graph class $\P(w,H)$ consists of graphs 

Fix a finite alphabet $A$, a (possibly infinite) word $w$ over $A$, and let $H$ be a graph with loops allowed with vertex set $V(H)=A$.
For any increasing sequence $i_1 < i_2 < \dotsb <i_k$ of positive
integers (with $i_k\le|w|$ if $w$ has finite length), define $G_{w,H}(i_1, i_2, \ldots, i_k)$ to be the graph
with vertex set $\{i_1, i_2, \dotsc, i_k\}$ and an edge between
$i_j$ and $i_{j'}$ if and only if either
\begin{compactitem}
\item $|i_j-i_{j'}|=1$ and $w_{i_j}w_{i_{j'}} \notin{E(H)}$, or
\item $|i_j-i_{j'}| > 1$ and $w_{i_j}w_{i_{j'}} \in E(H)$.
\end{compactitem}

%Let $G=G_{w,H}(u_1, u_2, \ldots, u_m)$ and define $V_a=\{{u_i} \in V(G): w_{u_i}=a\}$ for any $a \in A$.
%Then $\pi=\pi_{w}(G)=\{V_a : a \in A \}$ is an $(|A|,2)$-partition,
%and so $G$~is an $(|A|,2)$-graph.
%Moreover, $\psi(G,\pi,H)$~is a linear forest whose paths are formed by the consecutive segments of integers within the set $\{u_1, u_2, \ldots, u_m\}$.
%This partition $\pi_{w}(G)$ is called the \emph{letter partition} of~$G$.

Define $\P(w, H)$ to be the hereditary class consisting of the
graphs $G_{w, H}(i_1, i_2, \ldots, i_k)$ for all finite increasing
sequences $i_1 < i_2 < \dotsb < i_k$ of positive integers.

The classes $\P(w,H)$ enable a characterisation of the minimal classes with finite distinguishing number that lie above the Bell numbers. In this characterisation, we can restrict our attention to infinite words that satisfy a certain form of periodicity. Specifically, we say that a word $w$ is \emph{almost periodic} if for any finite factor $u$ of $w$, there is a constant $c$ such that every factor of $w$ of length $c$ contains $u$ as a factor.

\begin{theorem}[Atminas et al~{\cite[Theorems 3.10 and 3.13]{atminas:deciding-the-bell:}}]\label{factors}
Suppose $\C$ is a hereditary class above the Bell numbers with finite distinguishing number $k_\C$.
Then $\C$ contains some $\P(w, H)$ where $H$ is a graph (with loops allowed) of order at most $\ell_\C$, and $w$ is an infinite almost periodic word over the alphabet $V(H)$.

Furthermore, each such class $\P(w,H)$ is minimally above the Bell numbers (i.e.\ any proper hereditary subclass of $\P(w,H)$ lies below the Bell numbers).
\end{theorem}

In the case of finitely defined classes, (i.e.\ classes of the form $\C=\free(G_1,\dots,G_s)$), one can go further: $\C$ must contain some class $\P(w,H)$ where $w$ is \emph{periodic}, i.e.\ there exists $c$ such that $w_i = w_{i+c}$ for all $i\geq 1$, see~\cite[Theorem 4.6]{atminas:deciding-the-bell:}. 

This characterisation readily leads to a procedure to determine whether a given finitely defined class $\C=\free(G_1,\dots,G_s)$ is above or below the Bell numbers -- see Algorithm 4.7 of~\cite{atminas:deciding-the-bell:}. However, although this algorithm (which takes as input the forbidden  graphs $G_1,\dots,G_s$) will necessarily eventually terminate, the underpinning theory gives no bound for $\ell_\C$, and thus in the condition `$\C$ must contain some class $\P(w,H)$' there is no bound for the number of vertices in the graph $H$. We will provide such a bound later in this paper.

\section{Minimal antichains and boundary classes}\label{sec-antichains}

In this short section, we will establish some generic theory concerning infinite antichains which will be useful when we consider classes above the Bell numbers in the next section. In particular, we establish that the concept of a \emph{boundary class}, defined in~\cite{korpelainen:boundary-properties:}, is equivalent to the concept of a \emph{minimal} infinite antichain, first introduced by Nash-Williams~\cite{nash-williams:on-well-quasi-o:}. Except for basic definitions from the introduction, this section is self-contained.

Let $\A$ and $\B$ be infinite antichains of graphs, under the induced subgraph ordering\footnote{Although we specialise to graphs here, much of the theory we present here holds more generally for generic posets.}. We say that $\A\preceq \B$ if for every $B\in \B$ there exists $A\in \A$ such that $A$ is an induced subgraph of $B$. An infinite antichain $\A$ is \emph{minimal} if it is minimal under $\preceq$. 

By following the ``minimal bad sequence'' argument given by Nash-Williams~\cite{nash-williams:on-well-quasi-o:}, we can obtain the following result (for an explicit proof, see Gustedt~\cite{gustedt:finiteness-theo:}).

\begin{proposition}Every hereditary class $\C$ that is not well-quasi-ordered by the induced subgraph relation contains a minimal antichain.\end{proposition}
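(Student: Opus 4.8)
The plan is to run the classical minimal bad sequence argument of Nash-Williams in the form adapted to hereditary classes. Suppose $\C$ is a hereditary class that is not well-quasi-ordered by the induced subgraph relation. Since every hereditary class is well-founded under taking induced subgraphs (removing a vertex strictly decreases the number of vertices), the failure of well-quasi-ordering means that $\C$ contains an infinite antichain. I want to produce an infinite antichain $\A$ that is minimal under $\preceq$, i.e.\ such that no infinite antichain $\B$ of $\C$ satisfies $\B \prec \A$ strictly.

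The key step is a greedy construction of a ``minimal bad sequence''. Call a sequence $(G_1, G_2, \dots)$ of graphs in $\C$ \emph{bad} if it contains no pair $G_i$ with $G_i$ an induced subgraph of $G_j$ for $i < j$; the existence of an infinite antichain gives an infinite bad sequence. Now build a bad sequence term by term: having chosen $G_1, \dots, G_{n-1}$ so that they extend to some infinite bad sequence, choose $G_n \in \C$ of minimum possible order among all graphs $G$ such that $G_1, \dots, G_{n-1}, G$ extends to an infinite bad sequence. This is well-defined because the orders of graphs are natural numbers, so a minimum exists at each stage, and the resulting infinite sequence $(G_n)_{n\geq 1}$ is bad by construction. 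From a bad sequence one extracts an infinite antichain: among $\{G_n\}$, by Ramsey/ K\"onig-type pigeonholing on the (well-founded, hence by Higman-style argument eventually stabilising) induced-subgraph comparabilities one obtains an infinite subset that is pairwise incomparable, and one checks that thinning preserves the minimality property below. The cleanest route, following Gustedt, is to show directly that the antichain $\A$ extracted from a minimal bad sequence is $\preceq$-minimal.

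For the minimality claim, suppose for contradiction that some infinite antichain $\B \subseteq \C$ satisfies $\B \preceq \A$ but $\A \not\preceq \B$; so there is some $B_0 \in \B$ which has no member of $\A$ as an induced subgraph, yet $B_0$ is (as $\B\preceq\A$ applied the other way, or by a direct argument) an induced \emph{proper} subgraph of some $G_m \in \A$ -- here one uses that $\A \not\preceq \B$ forces a witnessing $B_0$ whose order is strictly less than that of the corresponding $G_m$. Then the sequence $G_1, \dots, G_{m-1}, B_0$ together with an appropriate tail built from $\B$ forms a bad sequence: no earlier $G_i$ embeds into $B_0$ (since $G_i \in \A$, $\A$ an antichain comparable with $B_0$ via the assumption), $B_0$ embeds into nothing later, and the tail from $\B$ is bad because $\B$ is an antichain and nothing from $\B$ receives an embedding from the $G_i$'s (again using $\B\preceq\A$ and $\A$ antichain). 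But $|V(B_0)| < |V(G_m)|$ contradicts the minimal choice of $G_m$ at stage $m$. Hence $\A$ is minimal.

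The main obstacle is the bookkeeping in gluing a tail onto $G_1, \dots, G_{m-1}, B_0$ to certify badness: one must verify that no comparability is accidentally created between the old prefix, the inserted $B_0$, and the new tail drawn from $\B$, and this is exactly where the hypotheses $\B \preceq \A$ and ``$\A$, $\B$ are antichains'' are used in tandem. A second, more technical point is the passage from ``infinite bad sequence'' to ``infinite antichain'' with the minimality property carried along --- handled either by a Ramsey argument on the pairs or, as in Nash-Williams' original treatment, by observing that a bad sequence with the minimality property yields an antichain directly. Since the excerpt explicitly cites Gustedt~\cite{gustedt:finiteness-theo:} for the explicit proof, I would present the argument at the level of detail above and refer the reader there for the routine verifications.
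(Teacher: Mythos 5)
You follow exactly the route the paper prescribes (the Nash-Williams minimal bad sequence argument, deferring details to Gustedt), and most of the skeleton is right: the greedy minimum-order construction is well defined, and the splicing trick -- replace $G_m$ by a smaller graph and re-attach a bad tail -- is indeed the engine of the whole proof. One simplification: no Ramsey extraction is needed, because the terms of a minimal bad sequence already form an antichain. Badness forbids $G_i\le G_j$ for $i<j$, and if $G_j<G_i$ for some $i<j$ then $G_1,\dots,G_{i-1},G_j,G_{j+1},\dots$ would be an infinite bad sequence contradicting the minimal choice of $G_i$. This also disposes of your worry about whether thinning preserves the minimality property.

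The genuine gap is in the minimality verification. It hinges on the claim that the witness $B_0\in\B$ (an element with no member of $\A$ below it) is a \emph{proper induced subgraph of some} $G_m\in\A$, and the justification offered (``$\A\not\preceq\B$ forces a witnessing $B_0$ whose order is strictly less than that of the corresponding $G_m$'') is a non sequitur: $\B\preceq\A$ says every element of $\A$ has something from $\B$ below it, and says nothing about where $B_0$ sits. A priori $B_0$ could be incomparable to every element of $\A$, in which case there is no position $m$ at which to splice. What is really needed is the \emph{completeness} of $\A$ relative to $\C$ (cf.\ Proposition~\ref{minimal-char}): every $H\in\C$ is comparable to some term of the sequence. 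This is true, but it requires its own (second) splicing argument: the orders $|G_n|$ are non-decreasing and unbounded, so if $H\in\C$ were incomparable to every $G_n$, inserting $H$ at the first position $m$ with $|G_m|>|H|$ would produce a bad sequence contradicting the minimal choice of $G_m$. (Note this step uses $B_0\in\C$, so the minimality in the statement is naturally read relative to antichains contained in $\C$.) With completeness established, your case $B_0<G_m$ and the tail-from-$\B$ bookkeeping do go through: the observation that makes the tail work is that $G_i\le B$ for $B\in\B$ forces $B=G_i$ (take $B'\in\B$ with $B'\le G_i$, so $B'\le B$ and hence $B'=B=G_i$), so only finitely many elements of $\B$ need to be discarded.
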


Given an infinite antichain $\A$, define the \emph{proper closure} to be the hereditary property of all graphs that are (proper) induced subgraphs of elements of $\A$, i.e.\ $\P^{<\A} = \{G: G < A \text{ for some } A\in\A\}$. For $G\in\P^{<\A}$, let $\A^{\parallel G} = \{A\in\A: G\not\leq A\}$. Finally, we say that  $\A$ is \emph{complete}\footnote{Some authors use the term ``maximal'' instead of complete. We have avoided this to prevent confusion with the ordering $\preceq$.} if for every graph $G$, we have either $G\in\A$, or there exists $A\in\A$ such that $G<A$ or $A<G$. We have the following characterisation of minimal antichains.

\begin{proposition}[{Gustedt~\cite[Theorem~6]{gustedt:finiteness-theo:}}]\label{minimal-char}
Let $\A$ be an infinite antichain of graphs under the induced subgraph ordering. Then $\A$ is minimal if and only if $\P^{<\A}$ is well-quasi-ordered, $\A^{\parallel G}$ is finite for every $G\in\P^{<\A}$, and $\A$ is complete.
\end{proposition}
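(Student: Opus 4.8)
The plan is to prove the two directions of the equivalence separately, using the classical minimal-bad-sequence machinery of Nash-Williams to establish the forward direction, and a direct constructive argument for the converse.

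For the forward direction, suppose $\A$ is a minimal infinite antichain. I would argue each of the three conclusions in turn. First, that $\P^{<\A}$ is well-quasi-ordered: if it contained an infinite antichain $\B$, then since every element of $\B$ lies strictly below some element of $\A$, a pigeonhole/choice argument would let us replace elements of $\A$ by elements of $\B$ sitting below them, contradicting minimality of $\A$ (we would need to check that the resulting collection still contains an infinite antichain strictly $\preceq$-below $\A$, extracting one by another application of the fact that hereditary classes not wqo contain antichains, or directly). Second, that $\A^{\parallel G}$ is finite for each $G\in\P^{<\A}$: if some $G$ had $G\not\le A$ for infinitely many $A\in\A$, then writing $G < A'$ for some fixed $A'\in\A$, the set $\{A'\}\cup\A^{\parallel G}$ would be an infinite antichain (it is an antichain because $A'$ is incomparable to everything in $\A\setminus\{A'\}$, and $\A^{\parallel G}\subseteq\A$) lying strictly $\preceq$-below $\A$ via $G$, again contradicting minimality — here the key point is that $G$ is a \emph{proper} induced subgraph of $A'$. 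Third, completeness: if $G$ were incomparable to every element of $\A$ and not in $\A$, then $\A\cup\{G\}$ is an infinite antichain, and one checks it is strictly below $\A$ (every $A\in\A$ has an induced subgraph in $\A\cup\{G\}$, namely itself, but $G$ has no induced subgraph in $\A$ other than — we must handle this carefully; the cleaner route is to observe that minimal antichains are exactly those that cannot be properly refined, and absorb $G$).

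For the converse, assume $\P^{<\A}$ is wqo, each $\A^{\parallel G}$ is finite, and $\A$ is complete; I want to show $\A$ is $\preceq$-minimal. Suppose $\B\preceq\A$ with $\B$ an infinite antichain; I must show $\A\preceq\B$, and then minimality follows (if $\B\precneqq\A$ strictly we derive a contradiction). Take any $B\in\B$. By $\B\preceq\A$ there is $A_0\in\A$ with $A_0\le B$; if $A_0 = B$ we are fine, so suppose $A_0 < B$ properly — but then $B\in\P^{<\A}$, and I would use well-quasi-ordering of $\P^{<\A}$ together with the finiteness of the sets $\A^{\parallel G}$ to show this cannot happen for infinitely many $B$, forcing all but finitely many elements of $\B$ to lie in $\A$; completeness of $\A$ then pins down the rest, giving $\B\subseteq\A$ up to finitely many elements and hence $\A\preceq\B$. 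The finiteness of $\A^{\parallel G}$ is what prevents $\B$ from "escaping downward" into $\P^{<\A}$: each $B\in\P^{<\A}$ fails to contain only finitely many members of $\A$, so an infinite antichain sitting below $\A$ inside $\P^{<\A}$ would have to contain, for some fixed $A\in\A$, infinitely many elements below $A$, which is impossible since they form an antichain and $A$ has finitely many induced subgraphs.

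The main obstacle I anticipate is bookkeeping in the forward direction: verifying that the modified collections ($\A$ with some elements swapped for smaller ones, or with $G$ adjoined/removed) genuinely remain antichains and genuinely lie \emph{strictly} $\preceq$-below $\A$, rather than just $\preceq$-below it. Getting the strictness right hinges on using properness of the induced-subgraph relations carefully and on the definition of $\preceq$ being "every element of the larger antichain dominates some element of the smaller one." A secondary subtlety is that $\preceq$ as defined is only a quasi-order on antichains, so "minimal" must be interpreted as "nothing strictly below it," and I will need the characterisation to be robust to the choice of representatives within a $\preceq$-equivalence class.
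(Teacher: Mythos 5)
The paper does not actually prove this proposition (it is quoted from Gustedt), so your argument has to stand on its own, and it has two genuine gaps beyond the bookkeeping you anticipate. First, in the forward direction your witness for condition (ii) is the wrong set: $\{A'\}\cup\A^{\parallel G}$ is a subset of $\A$, and a proper subset of an antichain is never $\preceq$ it --- if $\A'\subseteq\A$ and $\A'\preceq\A$, then each $A\in\A$ must contain some element of $\A'\subseteq\A$, which by the antichain property can only be $A$ itself, forcing $\A'=\A$. So your set sits strictly \emph{above} $\A$ in $\preceq$, not below. The correct witness is $\{G\}\cup\A^{\parallel G}$: one checks that $G$ is incomparable to every $A\in\A^{\parallel G}$ (if $A<G$ then $A<G<A'$ contradicts $\A$ being an antichain, while $G\not\le A$ holds by definition of $\A^{\parallel G}$), that this set is $\preceq\A$ (each $A\in\A$ contains either $G$ or itself), and that $\A$ is not $\preceq$ it (no $A\in\A$ embeds in $G$, since $A\le G<A'$ would make $A<A'$).

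Second, and more seriously, the converse direction inverts the quasi-order at the decisive step. From $\B\preceq\A$ you infer ``there is $A_0\in\A$ with $A_0\le B$'' for each $B\in\B$ --- but that is exactly the statement $\A\preceq\B$ you are trying to prove; the hypothesis $\B\preceq\A$ says only that every $A\in\A$ contains some element of $\B$. The argument has to run the other way: if some $B\in\B$ \emph{properly} contains some $A\in\A$, then by $\B\preceq\A$ that $A$ contains some $B'\in\B$, whence $B'\le A<B$ contradicts $\B$ being an antichain; so no element of $\B$ strictly contains an element of $\A$, and completeness then forces $\B\subseteq\A\cup\P^{<\A}$. Well-quasi-ordering of $\P^{<\A}$ makes $\B\cap\P^{<\A}$ finite, so $\B\cap\A$ is infinite; and if some $B_0\in\B\cap\P^{<\A}$ existed, the finiteness of $\A^{\parallel B_0}$ would yield some $A\in\B\cap\A$ with $B_0<A$, again contradicting the antichain property. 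Hence $\B\subseteq\A$ and $\A\preceq\B$. Relatedly, your closing ``$\B\subseteq\A$ up to finitely many elements, hence $\A\preceq\B$'' is a non sequitur: $\A\preceq\B$ requires \emph{every} element of $\B$ to contain an element of $\A$, and condition (ii) is needed precisely to eliminate the finitely many exceptions in $\B\cap\P^{<\A}$, not merely to bound their number.
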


By virtue of the above result, the proper closure of a minimal antichain is equal to the hereditary class whose set of minimal forbidden elements is $\A$, that is, $\P^{<\A} = \free(\A)$. 

An alternative viewpoint to this notion of minimal antichain was provided by Korpelainen et al~\cite{korpelainen:boundary-properties:}. We say that a graph class $\C$ is a \emph{limit class} if there is a sequence of classes $\C_1\supset \C_2\supset \cdots$ for which each $\C_i$ is not well-quasi-ordered, and $\C = \cap_{i=1}^\infty \C_i$. Trivially, every infinitely defined class is a limit class. Restricting to finitely defined classes, this notion simply characterises the non-well-quasi-ordered classes, as the following lemma shows.

\begin{lemma}[{Korpelainen et al~\cite[Lemma 1]{korpelainen:boundary-properties:}}]\label{limit-class}
A finitely defined class is a limit class if and only if it is not well-quasi-ordered.
\end{lemma}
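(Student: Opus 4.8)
The plan is to prove the two implications separately, observing at the outset that only the forward implication (limit class $\Rightarrow$ not well-quasi-ordered) will use finite definability, and that this hypothesis is genuinely needed there: by Proposition~\ref{minimal-char} the proper closure $\P^{<\A}$ of a minimal antichain $\A=\{A_1,A_2,\dots\}$ is a well-quasi-ordered class, yet it equals $\bigcap_{n\ge1}\free(A_1,\dots,A_n)$, a strictly decreasing intersection in which each class $\free(A_1,\dots,A_n)$ contains the infinite antichain $\{A_{n+1},A_{n+2},\dots\}$ and so is not well-quasi-ordered --- that is, $\P^{<\A}$ is a limit class that is well-quasi-ordered.

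For the reverse implication I would simply take the constant sequence $\C_i=\C$: if $\C$ is not well-quasi-ordered then $\C_1\supseteq\C_2\supseteq\cdots$ is a decreasing sequence of non-well-quasi-ordered classes with intersection $\C$, so $\C$ is a limit class. Nothing beyond the definitions is needed, and no use is made of finite definability.

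For the forward implication, write $\C=\free(F_1,\dots,F_k)$ and let $\C_1\supseteq\C_2\supseteq\cdots$ be classes, none well-quasi-ordered, with $\bigcap_{i\ge1}\C_i=\C$. The crux is that finite definability forces this chain to stabilise. For each $j\in\{1,\dots,k\}$ we have $F_j\notin\C=\bigcap_i\C_i$, hence $F_j\notin\C_{i_j}$ for some index $i_j$, and hence $F_j\notin\C_i$ for all $i\ge i_j$ because the chain is decreasing. Set $N=\max\{i_1,\dots,i_k\}$. Then $\C_N$ is a hereditary class containing none of $F_1,\dots,F_k$; if some $G\in\C_N$ contained an $F_j$ as an induced subgraph we would get $F_j\in\C_N$, a contradiction, so $\C_N\subseteq\free(F_1,\dots,F_k)=\C$. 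Since also $\C=\bigcap_i\C_i\subseteq\C_N$, we conclude $\C_N=\C$, and as $\C_N$ is not well-quasi-ordered, neither is $\C$.

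I do not expect a genuine obstacle. The single point that needs to be seen clearly is that finite definability is exactly what prevents a decreasing chain of classes with intersection $\C$ from descending strictly below $\C$ --- equivalently, that the complement of $\C$ is a finitely generated up-set in the induced subgraph order and so cannot be exhausted by a properly increasing union of up-sets. Everything else is routine manipulation of the induced subgraph relation.
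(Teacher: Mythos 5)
Your proof is correct and is exactly the standard argument: the paper itself cites this lemma from Korpelainen et al.\ without proof, but the stabilisation argument you give for the forward direction (each forbidden graph $F_j$ must drop out of the chain at some finite stage, forcing $\C_N=\C$) is the intended one, and your counterexample showing finite definability is genuinely needed is also the right one. The only caveat is that your reverse direction via the constant sequence reads the containments $\C_1\supset\C_2\supset\cdots$ in the definition of limit class as non-strict, which is indeed the intended reading here (as confirmed by the paper's remark that \emph{every} non-well-quasi-ordered class is a limit class).
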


Note in particular that any class that fails to be well-quasi-ordered (finitely defined or otherwise) is a limit class. Next, we say that any class $\C$ is a \emph{boundary class} if it is a minimal limit class. By necessity, boundary classes must be well-quasi-ordered, and thus by the above are not finitely defined. We have the following characterisation.

\begin{lemma}[{Korpelainen et al~\cite[Lemma 5]{korpelainen:boundary-properties:}}]\label{criterion}
A limit class $\C = \free(\A)$ is a boundary class if and only if for every $G\in\C$ there is a finite subset $T\subset\A$ such that $\free(\{G\}\cup T)$ is well-quasi-ordered.\end{lemma}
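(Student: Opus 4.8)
The plan is to prove both directions directly from the definitions, using Lemma~\ref{limit-class} to replace "limit class" by "not well-quasi-ordered" whenever we are dealing with a finitely defined class. Throughout, recall that since $\C$ is a limit class it is not well-quasi-ordered, hence not finitely defined, so $\A$ (its set of minimal forbidden graphs) is infinite, and moreover $\C = \free(\A)$ equals the proper closure $\P^{<\A}$ of the infinite antichain $\A$.

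For the forward direction, suppose $\C$ is a boundary class, i.e.\ a minimal limit class, and fix $G \in \C$. I would argue by contradiction: assume that for every finite $T \subset \A$ the class $\free(\{G\}\cup T)$ is not well-quasi-ordered. Enumerate $\A = \{A_1, A_2, \dots\}$ and set $\C_i = \free(\{G\}\cup\{A_1,\dots,A_i\})$; then $\C_1 \supseteq \C_2 \supseteq \cdots$, each $\C_i$ is finitely defined and (by assumption) not well-quasi-ordered, hence each $\C_i$ is a limit class by Lemma~\ref{limit-class}. Their intersection is $\free(\{G\}\cup\A) = \C \setminus \{\text{graphs containing } G \text{ as an induced subgraph}\}$, which is a proper subclass of $\C$ (it omits $G$ itself, and it is strictly smaller since $G \in \C$). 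Discarding any repetitions from the chain $(\C_i)$ — which is legitimate because the chain is infinite strictly descending unless it stabilises, and if it stabilises at some $\C_i = \free(\{G\}\cup\A)$ we may still re-index — exhibits $\free(\{G\}\cup\A)$ as a limit class strictly contained in $\C$, contradicting minimality of $\C$. Hence some finite $T\subset\A$ makes $\free(\{G\}\cup T)$ well-quasi-ordered.

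For the converse, suppose $\C = \free(\A)$ is a limit class with the stated property, and let $\D \subseteq \C$ be any limit class; we must show $\D = \C$. Since $\D$ is a limit class it is not well-quasi-ordered, so it contains an infinite antichain, and hence (being contained in $\C = \P^{<\A}$) it contains a minimal antichain $\B$ with $\D \supseteq \P^{<\B} = \free(\B)$. Suppose for contradiction that $\D \neq \C$, so there is a graph $G \in \C \setminus \D$; in particular $G \notin \free(\B)$, i.e.\ some $B_0 \in \B$ contains $G$ as an induced subgraph. By hypothesis there is a finite $T = \{A_1,\dots,A_r\}\subset\A$ with $\free(\{G\}\cup T)$ well-quasi-ordered. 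Now $\B$, being an antichain inside $\C = \free(\A)$, avoids every $A_j$, so $\B \subseteq \free(T)$; and every element of $\B$ other than those containing $G$ lies in $\free(\{G\}\cup T)$. Since $\free(\{G\}\cup T)$ is well-quasi-ordered it contains no infinite antichain, so all but finitely many elements of $\B$ contain $G$ as an induced subgraph. But $\B$ is an antichain, so at most one of its elements can contain another — and $G$ is an induced subgraph of each of these cofinitely many elements, forcing them to form a chain under $\le$, hence there is at most one of them. This contradicts $\B$ being infinite. Therefore no such $G$ exists and $\D = \C$, so $\C$ is a minimal limit class.

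The delicate point — and the step I expect to require the most care — is the contradiction at the end of the converse: translating "cofinitely many members of $\B$ induce $G$" into "$\B$ is finite" uses that $\B$ is an antichain together with the fact that the common induced subgraph $G$ does \emph{not} by itself force an induced-subgraph relation between two supergraphs, so one must instead recall that $\free(\{G\}\cup T)$ well-quasi-ordered already rules out an infinite antichain among the $G$-free members of $\B$, leaving only finitely many members that are $G$-free; the members that \emph{do} contain $G$ need a separate observation, and the cleanest route is simply to note that there are only finitely many graphs of each order in $\free(\{G\}\cup T)$ is \emph{not} what we want — rather, one applies the well-quasi-ordering of $\free(\{G\}\cup T)$ directly to the antichain $\B \cap \free(\{G\}\cup T)$, whereas the remaining members of $\B$ all contain the fixed $G$, and I will need to argue (via completeness of the minimal antichain $\B$, Proposition~\ref{minimal-char}, which bounds $\B^{\parallel G'}$) that only finitely many elements of $\B$ fail to contain $G$, and none can properly exceed another. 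In writing this up I would lean on Proposition~\ref{minimal-char} to make the finiteness bookkeeping rigorous rather than the ad hoc counting sketched here.
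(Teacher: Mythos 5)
First, note that the paper does not prove this lemma itself: it is quoted from Korpelainen et al.\ \cite{korpelainen:boundary-properties:}, so your proposal is being judged on its own merits rather than against an in-paper argument.

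Your forward direction is essentially sound: assuming every $\free(\{G\}\cup T)$ with $T\subset\A$ finite fails to be well-quasi-ordered, the chain $\C_i=\free(\{G\}\cup\{A_1,\dots,A_i\})$ exhibits $\free(\{G\}\cup\A)\subsetneq\C$ as a limit class (via Lemma~\ref{limit-class} if the chain stabilises), contradicting minimality. However, your framing rests on the claim that ``since $\C$ is a limit class it is not well-quasi-ordered.'' This is false --- the paper itself observes that boundary classes are well-quasi-ordered limit classes --- and while the error is not load-bearing in the forward direction, it is fatal in the converse. To prove $\C$ is a \emph{minimal} limit class you must rule out every proper limit subclass $\D\subsetneq\C$, including well-quasi-ordered ones; your argument silently assumes $\D$ is not well-quasi-ordered and so says nothing about, say, a smaller boundary class sitting inside $\C$. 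The standard repair works with the defining chain $\D=\bigcap_i\D_i$ directly: pick $G\in\C\setminus\D$ and the finite $T$ supplied by the hypothesis; since $G\notin\D$ and no $A\in T$ lies in $\D$ (as $T\subset\A$ and $\D\subseteq\free(\A)$), for all sufficiently large $i$ the class $\D_i$ contains neither $G$ nor any element of $T$, hence $\D_i\subseteq\free(\{G\}\cup T)$ is well-quasi-ordered, contradicting the definition of a limit class.

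Even restricted to the non-well-quasi-ordered case, your concluding step does not close. Two graphs sharing $G$ as a common induced subgraph are in no way forced to be comparable, so ``cofinitely many elements of $\B$ contain $G$'' does not make those elements a chain, and your final paragraph acknowledges but never resolves this. The clean finish is different: since $G\notin\D$ and $\P^{<\B}\subseteq\D$, \emph{no} element of $\B$ can properly contain $G$ (else $G\in\P^{<\B}$), and $G\notin\B$; hence all of $\B$ lies in $\free(\{G\}\cup T)$, an infinite antichain in a well-quasi-ordered class --- contradiction. (You also have a containment reversed: $G\notin\free(\B)$ means $G$ \emph{contains} some $B_0\in\B$, not the other way around, though that $B_0$ plays no further role.) In short, the forward direction is fine, but the converse needs to be rebuilt around the chain $(\D_i)$ rather than around an antichain extracted from $\D$.
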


We now show that boundary classes and minimal antichains describe the same structures, and thus we can use these concepts interchangeably.

\begin{theorem}\label{thm:min-boundary}
Let $\A$ be an antichain. Then $\A$ is minimal if and only if  $\free(\A)$ is a boundary class. 
\end{theorem}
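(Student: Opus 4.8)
The plan is to verify the two directions by matching the characterisation of minimal antichains in Proposition~\ref{minimal-char} against the characterisation of boundary classes in Lemma~\ref{criterion}, using the identity $\P^{<\A}=\free(\A)$ that follows from Proposition~\ref{minimal-char} for minimal antichains. For the forward direction, suppose $\A$ is minimal. Then $\P^{<\A}$ is well-quasi-ordered, so $\free(\A)=\P^{<\A}$ is not finitely defined (otherwise a finite forbidden set taken from $\A$ would already cut $\A$ down to a finite antichain, contradicting that $\A$ is infinite); being well-quasi-ordered yet not finitely defined, $\free(\A)$ is a limit class by (the contrapositive of) Lemma~\ref{limit-class} together with the observation that every infinitely defined class is a limit class. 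To see it is a \emph{minimal} limit class I would invoke Lemma~\ref{criterion}: given $G\in\free(\A)=\P^{<\A}$, the set $T=\A^{\parallel G}$ is finite by Proposition~\ref{minimal-char}, and I claim $\free(\{G\}\cup T)$ is well-quasi-ordered. Indeed every graph in $\free(\{G\}\cup T)$ that is also in $\free(\A)$ lies in the wqo class $\P^{<\A}$, and any graph outside $\free(\A)$ must contain some $A\in\A$ as an induced subgraph; since it avoids $G$ and everything in $T=\A^{\parallel G}$, that $A$ must satisfy $G\le A$ and $A\notin T$ — one then argues (using completeness and that $\A$ is an antichain) that such graphs form a wqo set, e.g.\ because they all contain $G$ and the relevant $A$'s are induced-subgraph-comparable to a bounded structure. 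This is the step I expect to require the most care.

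For the converse, suppose $\free(\A)$ is a boundary class. Then $\free(\A)$ is a limit class, hence not well-quasi-ordered only if... — rather, boundary classes are wqo by definition, so $\free(\A)=\P^{\le\A}$ minus the top elements is wqo, which already gives that $\P^{<\A}$ is wqo. It remains to check the other two conditions of Proposition~\ref{minimal-char}: that $\A^{\parallel G}$ is finite for each $G\in\P^{<\A}$, and that $\A$ is complete. Finiteness of $\A^{\parallel G}$ should follow from Lemma~\ref{criterion}: the finite set $T\subset\A$ with $\free(\{G\}\cup T)$ wqo must contain $\A^{\parallel G}$, for if some $A\in\A$ with $G\not\le A$ lay outside $T$, then $A$ together with an infinite antichain inside $\free(\{G\}\cup T)$ (which exists because $\free(\A)$ is not finitely defined and hence not wqo once we forbid $G$... ) would contradict wqo of $\free(\{G\}\cup T)$; I would need to spell out why forbidding one extra graph $G$ from the boundary class $\free(\A)$ leaves it non-wqo, which is exactly minimality of the limit class. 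Completeness of $\A$ should be essentially automatic: if some graph $G$ were incomparable to every element of $\A$, then $G\in\free(\A)$ and moreover $\A\subseteq\free(\{G\}\cup T)$ would still contain the infinite antichain $\A$, contradicting Lemma~\ref{criterion}; and conversely any $G$ not in $\free(\A)$ contains some $A\in\A$, so the only graphs needing checking are those in $\free(\A)$, which are proper subgraphs of elements of $\A$ and hence comparable.

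The main obstacle, as flagged, is the careful wqo bookkeeping in the forward direction — showing that adding back a single graph $G\in\free(\A)$ to the forbidden set (namely forming $\free(\{G\}\cup\A^{\parallel G})$) yields a wqo class. The clean way to handle this is to partition an arbitrary infinite sequence from $\free(\{G\}\cup\A^{\parallel G})$ into those members lying in $\P^{<\A}$ (handled by wqo of $\P^{<\A}$) and those not; a member not in $\P^{<\A}$ contains some $A\in\A$, and since it avoids $G$ and all of $\A^{\parallel G}$, that $A$ has $G\le A$, but $\A$ being an antichain forces there to be only finitely many such $A$ (bounded by $|\A^{\parallel G}|^c$-type reasoning is \emph{not} available, so instead: the complement $\A\setminus\A^{\parallel G}=\{A\in\A: G\le A\}$ need not be finite, so one must instead argue that any graph containing such an $A$ also contains $G$, and then use a Higman-type argument on the finitely many "extensions" — this is where I would slow down and write full detail). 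Everything else is a direct translation between the two sets of conditions.
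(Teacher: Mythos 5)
Your overall strategy is the same as the paper's: translate between Proposition~\ref{minimal-char} and Lemma~\ref{criterion}. But the step you flag as the hardest --- showing $\free(\{G\}\cup\A^{\parallel G})$ is well-quasi-ordered in the forward direction --- is where your proposal has a genuine gap, and ironically the observation that closes it is already in your own text. You correctly note that a graph in $\free(\{G\}\cup\A^{\parallel G})$ which contains some $A\in\A$ must have $G\le A$ (since $A\notin\A^{\parallel G}$), and that ``any graph containing such an $A$ also contains $G$.'' But that is the end of the argument: such a graph would contain $G$, contradicting that it lies in $\free(\{G\}\cup\A^{\parallel G})$. Hence no graph in $\free(\{G\}\cup\A^{\parallel G})$ contains any element of $\A$ at all, i.e.\ $\free(\{G\}\cup\A^{\parallel G})\subseteq\free(\A)=\P^{<\A}$, which is well-quasi-ordered by Proposition~\ref{minimal-char}. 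There is no nonempty case ``not in $\P^{<\A}$'' to handle, no partition of sequences, and no Higman-type argument on ``extensions'' --- that proposed detour is both unnecessary and not obviously completable as described. This one-line containment is exactly what the paper uses.

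Two smaller points in the converse direction. First, your claim that the finite set $T$ from Lemma~\ref{criterion} ``must contain $\A^{\parallel G}$'' is too strong and not what you need; the correct statement is that $\A^{\parallel G}\setminus T$ is an antichain contained in $\free(\{G\}\cup T)$ (each such $A$ avoids $G$ because $G\not\le A$, and avoids the elements of $T$ because $\A$ is an antichain), hence finite by well-quasi-ordering of $\free(\{G\}\cup T)$, so $\A^{\parallel G}$ is finite. Your parenthetical attempt to derive a contradiction from a single $A$ outside $T$ does not work and is not needed. Second, your completeness argument (an incomparable $G$ lies in $\free(\A)$ and $\A\setminus T\subseteq\free(\{G\}\cup T)$ is an infinite antichain, contradicting Lemma~\ref{criterion}) is valid and is a mild variant of the paper's route, which instead observes that $\free(\{G\}\cup\A)$ would be a proper, infinitely defined (hence limit) subclass, contradicting minimality. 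Also note that well-quasi-ordering of a boundary class is not literally ``by definition''; it requires the short argument the paper gives (otherwise $\C\cap\free(B)$ for $B$ in an antichain of $\C$ would be a smaller limit class), though the paper does record this fact immediately after the definition, so citing it is acceptable.
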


\begin{proof}
Suppose first that $\A$ is minimal (which implies that $\A$ is an infinite antichain). Write $\A=\{A_1, A_2, \ldots\}$, and let $\C=\free(\A)$. Set $\C_i =\free(A_1,\dots,A_i)$ so that $\C = \cap_{i=1}^\infty\C_i$, which shows that $\C$ is a limit class. By Proposition~\ref{minimal-char}, $\C=\free(\A) = \P^{<\A}$ is well-quasi-ordered, and for any $G\in\C$ the set $\A^{\parallel G}$ is finite. Therefore we have that for any $G\in\C$, the hereditary class $\free(\{G\}\cup\A^{\parallel G})$ is finitely defined, and as it is a subclass of $\C=\P^{<\A}$, it is well-quasi-ordered. Thus, by Lemma~\ref{criterion}, $\C$ is a boundary class.

Now suppose that $\C=\free(\A)$ is a boundary class. First, note that $\C$ must be well-quasi-ordered. Otherwise, $\C$ would contain an infinite antichain $\B$, say. Pick some $B\in\B$, and consider $\C\cap\free(B)\subsetneq \C$: this class is not well-quasi-ordered (it contains $\B\setminus\{B\}$), and therefore it is a limit class, contradicting the minimality of $\C$. Thus $\C$ is well-quasi-ordered. By Lemma~\ref{limit-class}, this also implies that $\A$ is an infinite antichain.

We next show that $\A$ is complete. Suppose to the contrary that there exists $G$ such that $\{G\}\cup\A$ is an infinite antichain. Then $G\in\C$, and $\free(\{G\}\cup\A)$ is a proper subclass of $\C$, and since it is not finitely defined, it is a limit class, a contradiction showing that $\A$ is complete. Note that $\A$ being complete implies that $\free(\A) = \P^{<\A}$, and therefore $\P^{<\A}=\C$ is well-quasi-ordered.

Finally, for any $G\in\free(\A)=\P^{<\A}$, by Lemma~\ref{criterion}, we can find a finite set $T \subset \A$ such that $\free(\{G\} \cup T)$ is well-quasi-ordered. This means that the set $\A^{\parallel G}\setminus T$ (which is contained in $\free(\{G\} \cup T)$) must be finite, which implies that $\A^{\parallel G}$ is finite. Thus, by Proposition~\ref{minimal-char} $\A$ is a minimal antichain.
\end{proof}

%
%
%
%
%
%
%
%
%
%%%%%%%%%%%%%%%%%%%%%%%%%%%%%%%%%%%%%%%%%%%%%%%%%%%
%%%                                             S E C T I O N      W Q O                                                        %%%%%%%%%
%%%%%%%%%%%%%%%%%%%%%%%%%%%%%%%%%%%%%%%%%%%%%%%%%%%

%
%
%
%
%
%
%
%
\section{Characterising well-quasi-ordering}\label{5.5}

Our aim in this section is to characterise the boundary between well-quasi-ordering and not for classes with finite distinguishing number. 

We first briefly mention the case of classes that lie below the Bell numbers. As noted in the introduction, the following result is readily deduced by combining Theorem 30 of~\cite{balogh:the-speed-of-he:} with Theorem 2 of~\cite{korpelainen:two-forbidden:}.

\begin{theorem}[Balogh, Bollob\'as and Weinreich~{\cite{balogh:the-speed-of-he:}} \& Korpelainen and Lozin~{\cite{korpelainen:two-forbidden:}}]\label{belowwqo}
Every hereditary class below the Bell numbers is labelled-well-quasi-ordered. 
\end{theorem}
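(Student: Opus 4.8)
The plan is to feed the structural description of below-Bell classes given by Theorem~30 of~\cite{balogh:the-speed-of-he:} into a Higman-type argument, which is essentially what Korpelainen and Lozin carry out in~\cite[Theorem~2]{korpelainen:two-forbidden:}; indeed the shortest route is just to observe that the structure supplied by the former is a special case of the hypothesis of the latter. For completeness I would also spell the argument out, roughly as follows.

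First I would invoke~\cite[Theorem~30]{balogh:the-speed-of-he:}: since $\C$ lies below the Bell numbers, there is a constant $k=k(\C)$ such that every $G\in\C$ admits a partition of $V(G)$ into at most $k$ parts, each inducing a clique or an independent set, together with a linear order of $V(G)$ with respect to which the adjacencies within and between the parts are of one of boundedly many ``monotone'' types. The consequence I would extract is a fixed finite alphabet $A=A(\C)$ and, for each $G\in\C$, a word $\phi(G)$ over $A$ of length $|V(G)|$ whose $j$th letter records which part the $j$th vertex of $G$ (in the linear order) lies in, together with the bounded amount of local data about its neighbourhoods needed to reconstruct every adjacency of $G$. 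This encoding is in the spirit of the $\P(w,H)$ construction of Section~\ref{preliminaries}. The key additional property is that $\phi$ respects the induced subgraph order: if $H\le G$, then restricting the partition and the linear order of $G$ to $V(H)$ gives a legitimate encoding of $H$, so some choice of $\phi(H)$ is a subword of $\phi(G)$ in which each retained vertex keeps its letter --- after a routine enlargement of $A$ ensuring that the letters carry whatever bookkeeping is needed for vertex deletions to behave.

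Next I would bring in the labels. Let $(W,<)$ be an arbitrary well-quasi-order, where $A$ is regarded as a well-quasi-order under equality. A $W$-labelled graph from $\C$ is then encoded by a word over $A\times W$, which is a well-quasi-order as a finite product of well-quasi-orders, so by Higman's Lemma the words over $A\times W$ are well-quasi-ordered under the subword relation with letters compared componentwise. If $\C$ contained an infinite antichain $G_1,G_2,\dots$ of $W$-labelled graphs, then two of the words $\phi(G_1),\phi(G_2),\dots$ would be comparable, say $\phi(G_i)$ is a labelled subword of $\phi(G_j)$; because the letters encode adjacency faithfully, the resulting order-preserving injection $V(G_i)\to V(G_j)$ preserves both adjacency and (up to the order on $W$) labels, i.e.\ it exhibits $G_i$ as a labelled induced subgraph of $G_j$, contradicting the choice of an antichain.

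The part I expect to be the main obstacle is the alphabet design in the second step: one must check that a single finite alphabet can record enough of each vertex's neighbourhood that equality of two letters together with the relative positions of the vertices forces equality of their adjacency, \emph{and} that this property survives vertex deletions so that $\phi$ is genuinely functorial for subwords. This is exactly what is packaged in~\cite[Theorem~30]{balogh:the-speed-of-he:} (which is why Theorem~\ref{belowwqo} may instead be quoted directly from~\cite[Theorem~2]{korpelainen:two-forbidden:}), and a fully self-contained treatment is given in Appendix~\ref{subsec-belowbell}.
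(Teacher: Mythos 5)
Your top-level route is the one the paper itself takes: quote the structural theorem of Balogh, Bollob\'as and Weinreich for below-Bell classes and feed the resulting structure into Korpelainen and Lozin's labelled-wqo result. The appendix makes this translation precise by showing (Lemma~\ref{kuniformld} and Theorem~\ref{thm:below-bell-k-uniform}) that every graph in a below-Bell class is $k$-uniform for a fixed $k=k(\C)$, and then quoting Theorem~\ref{kuniformwqo} as a black box rather than re-running Higman's lemma; so as a citation-level argument your proposal matches the paper.

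Where you spell the argument out, however, there is a genuine gap at exactly the point you flag. A word with one letter per vertex, read along a linear order, cannot have the property that adjacency is determined by the two letters together with the vertices' relative positions: below the Bell numbers the sparsification still has arbitrarily many nontrivial components (for instance the class of graphs of maximum degree one, of speed $n^{(1/2+o(1))n}$, consists of induced matchings), and in an induced matching listed as $a_1b_1a_2b_2\cdots$ the pairs $(a_1,b_1)$ and $(a_1,b_2)$ carry identical letters and the same relative order yet differ in adjacency. Consequently the subword relation on such per-vertex words does not imply the induced subgraph relation, which is the direction Higman's argument actually needs (an arbitrary subword embedding may split one sparse component of $G_i$ across several components of $G_j$), and no ``routine enlargement'' of a per-vertex alphabet repairs this. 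The repair is to change the unit of encoding from a vertex to an entire component of the sparsification: by Theorem~\ref{paths} these have size at most $m_\C$, so each component --- decorated with the bag-membership of its vertices and, for the labelled version, a tuple of at most $m_\C$ labels from $W$ --- becomes a single letter of a wqo alphabet, and a subword embedding of component-sequences does yield a labelled induced subgraph embedding because between distinct components the (sparsified) adjacency is controlled entirely by the bags (join or co-join). This blocking is precisely what $k$-uniformity packages and what Lemma~\ref{kuniformld} carries out. Finally, your sketch omits the $c_\C$ vertices that must be deleted before the structure theorem applies at all; the paper absorbs these via Lemma~\ref{kuniformadd}.
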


Since Theorem~\ref{belowwqo} has not previously been explicitly mentioned, and since its proof is spread over two papers that use different notation, for the sake of completeness we have presented a proof of it in Appendix~\ref{subsec-belowbell}. Both the case of classes below the Bell numbers and those above rely on a concept known as `sparsification', which the next subsection handles.

We make here one further remark. In \cite{daligault:well-quasi-orde:} the following observation was made.

\begin{lemma}[Daligault, Rao and Thomass\'e~\cite{daligault:well-quasi-orde:}]\label{lemma:lwqo-fd} If $\C$ is a labelled well-quasi-ordered class, then $\C$ is finitely defined.\end{lemma}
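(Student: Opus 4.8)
The plan is to prove the contrapositive: if $\C$ is not finitely defined, then $\C$ is not labelled well-quasi-ordered. Since $\C$ is not finitely defined, its set of minimal forbidden induced subgraphs is an infinite antichain $\A=\{A_1,A_2,\dots\}$ under the induced subgraph ordering, so the $A_i$ are pairwise distinct and pairwise incomparable. The first thing to record is that a finite poset is automatically a well-quasi-order, so it is legitimate to use a small finite label set below.

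The idea is to convert each forbidden graph $A_i$ into a member of $\C$ by deleting a single vertex, while encoding the adjacency to that deleted vertex using vertex labels. Concretely, for each $i$ fix a vertex $v_i\in V(A_i)$ and let $B_i=A_i-v_i$; this lies in $\C$ because $A_i$ is a minimal forbidden graph, so all its proper induced subgraphs are in $\C$. Label each vertex $u$ of $B_i$ by $1$ if $u$ is adjacent to $v_i$ in $A_i$ and by $0$ otherwise, and take as the label poset $W=\{0,1\}$ in which $0$ and $1$ are incomparable (a trivial wqo). Write $B_i^\ast$ for the resulting labelled graph.

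The key step is to show $\{B_i^\ast:i\geq 1\}$ is an infinite antichain in the labelled induced subgraph ordering. Suppose $B_i^\ast$ embeds into $B_j^\ast$ as a labelled induced subgraph via $\phi$. Since the two labels are incomparable, $\phi$ must preserve labels exactly, i.e.\ $u$ is adjacent to $v_i$ in $A_i$ if and only if $\phi(u)$ is adjacent to $v_j$ in $A_j$. Extending $\phi$ by setting $v_i\mapsto v_j$ then yields an induced-subgraph embedding of $A_i$ into $A_j$: adjacencies among the vertices of $B_i$ are preserved because $\phi$ is an induced embedding of $B_i$ into $B_j$, and adjacencies (and non-adjacencies) to $v_i$ are preserved in both directions by the exact label condition. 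As $\A$ is an antichain of distinct graphs, this forces $i=j$. Hence the $B_i^\ast$ are pairwise incomparable, in particular pairwise distinct, so they constitute an infinite antichain of $W$-labelled graphs from $\C$, contradicting that $\C$ is labelled well-quasi-ordered.

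The only real care needed is in the extension argument — checking that "induced" is genuinely preserved at the newly restored vertex in \emph{both} directions — but forcing the label poset to be an antichain rather than a chain is precisely what delivers this, so I do not anticipate a substantive obstacle. A cosmetic variant would use three labels, but two suffice since exactly one vertex is deleted.
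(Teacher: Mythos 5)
Your proof is correct and is essentially the standard argument from the cited reference (the paper itself states this lemma with a citation to Daligault, Rao and Thomass\'e and gives no proof): delete one vertex from each minimal forbidden graph, record its neighbourhood with two incomparable labels, and observe that a labelled embedding would extend to an induced-subgraph embedding between two minimal forbidden graphs. All the key points — that $\{0,1\}$ as an antichain is still a wqo, and that incomparability of the labels forces exact preservation of adjacency to the deleted vertex in both directions — are handled correctly.
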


This gives the following interesting corollary of Theorem~\ref{belowwqo}.

\begin{corollary} \label{finitedef}
Every hereditary class below the Bell numbers is finitely defined.
\end{corollary}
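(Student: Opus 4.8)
The plan is to combine Theorem~\ref{belowwqo} with Lemma~\ref{lemma:lwqo-fd} directly; the statement is an essentially immediate logical consequence. Let $\C$ be a hereditary class below the Bell numbers. By Theorem~\ref{belowwqo}, $\C$ is labelled well-quasi-ordered. By Lemma~\ref{lemma:lwqo-fd}, any labelled well-quasi-ordered class is finitely defined. Hence $\C$ is finitely defined, which is exactly the assertion of Corollary~\ref{finitedef}.

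Since the corollary follows in two lines, there is essentially no obstacle and no need for further case analysis or auxiliary constructions. I would simply write: ``By Theorem~\ref{belowwqo}, every hereditary class below the Bell numbers is labelled well-quasi-ordered, and by Lemma~\ref{lemma:lwqo-fd} every labelled well-quasi-ordered class is finitely defined. Combining these two facts yields the claim.'' If one wanted to be slightly more self-contained one could briefly recall why Lemma~\ref{lemma:lwqo-fd} holds (a non-finitely-defined class has an infinite antichain of minimal forbidden graphs, and one can label a suitably chosen infinite family to exhibit an infinite antichain in the labelled order even when the unlabelled order happens to be well-quasi-ordered), but since Lemma~\ref{lemma:lwqo-fd} is already stated and attributed in the excerpt, invoking it as a black box is the cleanest route.

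The only point worth a sentence of commentary is that this corollary is not vacuous: there genuinely exist hereditary classes below the Bell numbers (for instance classes of bounded speed, or the class of all disjoint unions of paths of bounded length, etc.), so the statement has content — it tells us that all of them, despite potentially intricate structure, admit a finite set of minimal forbidden induced subgraphs. I would note that the contrapositive is also a useful reformulation: any hereditary class that is not finitely defined must have speed at least $\B_n$ for infinitely many $n$. That observation could be included as a closing remark, but it is not needed for the proof itself.

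In short, the proof I would record is: ``\emph{Proof.} Immediate from Theorem~\ref{belowwqo} and Lemma~\ref{lemma:lwqo-fd}. \hfill$\qed$'' — possibly expanded to the two-sentence version above so that the logical chain is explicit for the reader. The hard part is genuinely nonexistent here; all the real work is buried in Theorem~\ref{belowwqo} (whose proof is deferred to the appendix) and in the cited Lemma~\ref{lemma:lwqo-fd} of Daligault, Rao and Thomassé.
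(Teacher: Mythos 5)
Your proof is correct and is exactly the paper's argument: the authors state the corollary immediately after Lemma~\ref{lemma:lwqo-fd} as a direct combination of that lemma with Theorem~\ref{belowwqo}, with no further work. (The only quibble is in your closing aside: the contrapositive actually gives that a non-finitely-defined class lies \emph{above} the Bell numbers, i.e.\ $|\C^n|\geq\B_n$ for all sufficiently large $n$, which is stronger than ``for infinitely many $n$''.)
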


\subsection{\texorpdfstring{$(\ell,d)$}{(l,d)}-graphs and sparsification}

For an arbitrary graph $G$, let $U,W\subseteq V(G)$ be two (not necessarily disjoint) sets of vertices. Let
\[\Delta(U,W)= \max \{ |N(u) \cap W|, |N(w) \cap U| : u \in U, w \in W\},\]
denote the size of the largest neighbourhood of some vertex from $U$ or $W$ in the other set. Note that $\Delta(U,U)$ records the maximum degree of a vertex in the graph induced on the vertex set $U$.

Similarly, letting $\overline{N}(u)=V(G) \backslash (N(u) \cup \{u\})$ denote the non-neighbourhood of $u$, define \[\overline{\Delta}(U,W)=\max\{|\overline{N}(u) \cap W|, |\overline{N}(w) \cap U|: w \in W, u \in U \},\]
which records the size of the largest non-neighbourhood of a vertex from $U$ or $W$ in the other set.

A partition $\pi=\{V_1,V_2,\dotsc,V_{\ell'}\}$ of~$V(G)$ is an \emph{$(\ell,d)$-partition}
if $\ell'\leq \ell$ and for each pair of integers $1\leq i,j \leq \ell'$ (not necessarily distinct) either $\Delta(V_i, V_j) \leq d$ or $\overline{\Delta}(V_i, V_j) \leq d$. We say that $G$ is an \emph{$(\ell, d)$-graph} if it admits some $(\ell,d)$-partition. We will often refer to the sets $V_i$ as \emph{bags}.

If in some $(\ell,d)$ partition $\pi=\{V_1,V_2,\dotsc,V_{\ell'}\}$ of~$V(G)$ we have $|V_i|\geq t$ for some $t$ and for all $i=1,\dots,\ell'$, then we call $\pi$ a \emph{$t$-strong} $(\ell,d)$ partition, and $G$ is a $t$-strong $(\ell,d)$ graph.

We are particularly interested in $(\ell,d)$ graphs where $d$ is `small', even when the bags in the partition are large. If $\Delta (V_i, V_j) \leq d$ (respectively, $\overline{\Delta}(V_i, V_j) \leq d$), we say that the pair $(V_i,V_j)$ is \emph{$d$-sparse} (resp. \emph{$d$-dense}). Note that in a $t$-strong partition where $t\geq 2d+1$, the terms $d$-dense and $d$-sparse are mutually exclusive. 

The significance of $(\ell,d)$-graphs for classes with finite distinguishing number is the following result, which guarantees that every $G$ in $\C$ is an $(\ell,d)$-graph after removing a bounded number of vertices. Recall that when $k_\C<\infty$, we defined $\ell_\C$ to be the largest number of sets, each of size at least $k_\C$, that can be distinguished in any graph in $\C$.

\begin{lemma}[{Atminas et al~\cite[Lemma 2.11]{atminas:deciding-the-bell:}}] \label{finitedistinguishingnumber}
Let $\C$ be a class with~$k_{\C}<\infty$, , and let $t\geq 0$ be fixed. Then there exist $\ell_\C$, $d_{\C}$ and $c_{\C} = c_\C(t)$
such that for all $G \in {\C}$, the graph~$G$ contains an induced subgraph~$G'$
such that $G'$~is a $t$-strong $(\ell_{\C}, d_{\C})$-graph and ${|V(G) \backslash V(G')| < c_{\C}}$.
\end{lemma}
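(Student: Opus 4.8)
This lemma is quoted from~\cite{atminas:deciding-the-bell:}; here is the approach I would take directly from the definitions. The plan is to isolate a purely combinatorial dichotomy about arbitrary graphs and then feed in the defining property of $k_\C<\infty$. First, since $k_\C<\infty$ there is a pair $(k,\ell)$ as in case~(b) of the definition, so in particular $\ell_\C\le \ell<\infty$; and because $\ell_\C$ is the \emph{largest} number of arbitrarily-large sets distinguishable for every size bound, there is a constant $k^*=k^*(\C)$ such that no $G\in\C$ admits a set $X$ distinguishing $\ell_\C+1$ sets each of size at least $k^*$. It then suffices to produce constants $d_\C$ and $c_0=c_0(\C)$ such that every $G\in\C$ has a set $S$ with $|S|<c_0$ for which $G-S$ is an $(\ell_\C,d_\C)$-graph: given $t$, one deletes from such a partition each bag of size $<t$ (at most $\ell_\C$ of them, hence fewer than $\ell_\C t$ further vertices), which keeps every surviving pair of bags $d_\C$-sparse or $d_\C$-dense, and sets $c_\C(t)=c_0+\ell_\C t$.

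The combinatorial core I would establish is: for all integers $\ell,s\ge 1$ there exist $d=d(\ell,s)$ and $r=r(\ell,s)$ such that every graph $G$ either has a set $S$ with $|S|<r$ and $G-S$ an $(\ell,d)$-graph, or admits a set $X$ distinguishing $\ell+1$ sets each of size at least $s$. The mechanism is an iterated Ramsey construction: repeatedly extract from $G$ a large induced subgraph $W$ that is $d$-sparse or $d$-dense (such a ``chunk'' exists inside any sufficiently large graph, since a homogeneous set furnished by Ramsey's theorem is $0$-sparse or $0$-dense), and then, via a bipartite (``$0/1$-matrix'') Ramsey argument, shrink $W$ so that it becomes $d$-sparse or $d$-dense against every chunk extracted so far, discarding into $S$ the comparatively few vertices that behave ``mixedly'' against some chunk. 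One iterates on the untouched remainder until it is too small to contain another chunk; the chunks $V_1,\dots,V_m$ together with $S$ then form a partition of $V(G)$ in which every pair of bags is $d$-sparse or $d$-dense.

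Two things must be controlled, and this is where essentially all of the work -- and the main obstacle -- lies. First, the number of chunks never exceeds $\ell$: if $\ell+1$ large, pairwise $d$-sparse-or-$d$-dense chunks ever appeared, then (after merging any chunks of identical adjacency-type, at bounded cost to $d$, and deleting from each chunk the at most $d$ ``wrong'' vertices for each ordered pair of chunks) a single vertex chosen from each chunk forms a set $X$ under which each trimmed chunk has a constant and pairwise-distinct neighbourhood pattern, so $X$ distinguishes $\ell+1$ sets, still of size at least $s$ provided the Ramsey parameters were chosen to start the chunks large enough. Second, the discard set must stay bounded: an unbounded family of ``mixed'' vertices against a fixed chunk would, after further Ramsey iterations recovering internal regularity within and across levels, contain a blow-up of one of $\X_1,\dots,\X_{13}$ with arbitrarily many large parts, again yielding a set distinguishing $\ell+1$ arbitrarily large sets (compare Theorem~\ref{thm:infinite}). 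The delicate point is the bookkeeping that keeps all $\binom{\ell+1}{2}$ sparse/dense relations intact while chunks are being re-shrunk; I would handle this by fixing the final target size of the chunks in advance and peeling in rounds, so that the at most $\ell$ rounds of Ramsey-shrinking suffered by any one chunk never exhaust it.

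Finally, apply the dichotomy with $\ell=\ell_\C$ and $s=k^*$. The second alternative cannot hold, by the choice of $k^*$, so every $G\in\C$ satisfies the first; this gives the lemma with $d_\C=d(\ell_\C,k^*)$ and, via the reduction in the first paragraph, $c_\C(t)=r(\ell_\C,k^*)+\ell_\C t$.
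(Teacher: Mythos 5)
The paper does not actually prove this lemma: it quotes it verbatim from Atminas, Collins, Foniok and Lozin, and the only proof content it adds is the remark that the $t$-strong refinement follows from the plain version by letting $c_\C$ depend on $t$. Your opening reduction --- discard the at most $\ell_\C$ bags of size less than $t$ at a cost of fewer than $\ell_\C t$ further vertices, so $c_\C(t)=c_0+\ell_\C t$ --- is exactly that remark, and your extraction of $k^*$ from the definition of $\ell_\C$ (as the \emph{largest} $\ell$ for which arbitrarily large distinguished sets persist) is also correct. So the framing and the part of the argument the paper actually supplies are both right.

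The remainder is an attempt to reprove the cited core (essentially Lemma 27 of Balogh, Bollob\'as and Weinreich), and the two steps you yourself identify as the main obstacles are not closed. First, the claim that $\ell+1$ pairwise sparse-or-dense homogeneous chunks yield a set $X$ distinguishing $\ell+1$ large sets fails as written: choosing one vertex per chunk only separates chunks whose sparse/dense type vectors differ, and your fallback of ``merging chunks of identical adjacency-type at bounded cost to $d$'' both pushes the count back below $\ell+1$ and destroys the diagonal condition --- the union of two large cliques that are $d$-sparse to one another satisfies neither $\Delta(V,V)\leq d'$ nor $\overline{\Delta}(V,V)\leq d'$ for any bounded $d'$. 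Same-type chunks \emph{can} be separated (the definition permits $X$ to meet the distinguished sets, e.g.\ placing a mutually non-adjacent vertex of each of two sparse cliques into $X$), but that case analysis is precisely the missing work. Second, the boundedness of the discard set rests on the assertion that unboundedly many ``mixed'' vertices against a fixed chunk contain a blow-up of some $\X_i$; mixed vertices need not have pairwise distinct neighbourhoods in the chunk, so nothing like Proposition~\ref{prop:f-well-defined} applies directly, and ``further Ramsey iterations recovering internal regularity'' is naming the difficulty rather than resolving it --- this is the crux of the cited proof. As a plan your sketch points in a reasonable direction, but it does not yet constitute a proof of the dichotomy it relies on.
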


Note that a version of this result can be found in~\cite{balogh:the-speed-of-he:}, but the version we are using here guarantees that the $(\ell,d)$ partition is $t$-strong, for an arbitrary positive integer $t$: this is achieved simply by allowing $c_\C$ to be a function of $t$.

\paragraph{Sparsification}
Given a graph $G$ with an $(\ell,d)$-partition $\pi=\{V_1,V_2,\dotsc,V_{\ell'}\}$, define $\phi(G,\pi)$ 
to be the graph obtained from~$G$ by replacing edges between $V_i$ and $V_j$ with non-edges, and vice-versa, for every $d$-dense pair $(V_i,V_j)$. i.e.\ take the bipartite complement between the sets $V_i$ and $V_j$. We call $\phi$ the \emph{sparsification} of $G$ with respect to the partition $\pi$.

It was shown in \cite{atminas:deciding-the-bell:} that if $t\geq 5\times 2^\ell d$, then for a $t$-strong $(\ell,d)$-graph $G$ and any two $t$-strong $(\ell,d)$-partitions $\pi$ and $\pi'$, 
the graphs $\phi(G,\pi)$ and $\phi(G, \pi')$ are identical. In this case, therefore, we can simply talk about the \emph{sparsification} of $G$, without reference to any particular partition. This uniqueness is critical for the rest of our analysis.

%
%
%
%
%
%
%
%
%
%
%%%%%%%%%%%%%%%%%%%%%%%%%%
\subsection{Above the Bell numbers}\label{subsec-abovebell}

In this subsection, by considering the minimal classes $\P(w,H)$ above the Bell numbers, we will prove the following.

\begin{theorem}\label{thm:abovebell}
Let $\C$ be a hereditary class with $k_\C<\infty$ which lies above the Bell numbers. Then $\C$ is not labelled well-quasi-ordered. Moreover, if $\C$ is finitely defined, then $\C$ is not well-quasi-ordered.
\end{theorem}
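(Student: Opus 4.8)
The plan is to prove the two assertions via the structural reductions already set up in the excerpt. By Theorem~\ref{factors}, any class $\C$ above the Bell numbers with $k_\C<\infty$ contains some $\P(w,H)$ with $w$ infinite almost periodic over $V(H)$, and if $\C$ is moreover finitely defined we may take $w$ to be \emph{periodic}. So it suffices to show (i) every class $\P(w,H)$ with $w$ infinite almost periodic is not labelled well-quasi-ordered, and (ii) every class $\P(w,H)$ with $w$ periodic is not (unlabelled) well-quasi-ordered; the conclusions for $\C$ then follow since a superclass of a non-(labelled-)wqo class is itself not (labelled-)wqo.

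For part (ii), I would exhibit an explicit infinite antichain inside $\P(w,H)$ when $w$ is periodic with period $c$. The graphs $G_{w,H}(i_1,\dots,i_k)$ have the flavour of "path-like" or "caterpillar-like" graphs: consecutive indices interact via the complement of $H$ and far-apart indices via $H$ itself. The natural candidates are graphs built from a long "path" portion of $w$ (choosing a block of consecutive integers, whose induced graph is governed by the edges/non-edges of $H$ along the periodic pattern) capped off with a few extra vertices chosen far away to act as fixed "markers" or "pendants". One then varies the length of the path-like middle section to get a sequence $P_1, P_2, \dots$; the markers are arranged so that no $P_m$ embeds into $P_{m'}$ for $m \ne m'$, which one checks by a parity/length argument combined with the rigidity forced by the marker vertices' adjacencies. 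This is essentially the classical construction showing that classes containing arbitrarily long induced paths (or suitable variants — the $\P(w,H)$ with non-trivial $w$ always contain such "linear" structure because $w$ is non-eventually-constant, being almost periodic and the class being above the Bell numbers) are not wqo; the key point to nail down is that the adjacency rules of $G_{w,H}$ do not accidentally collapse the candidate antichain, which is where almost-periodicity (resp. periodicity) of $w$ is used — it guarantees the middle segments are "self-similar enough" that embeddings are forced to be order-preserving translations.

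For part (i), labelled wqo is a strictly stronger requirement, so I expect this to follow more cheaply: even a single class $\P(w,H)$ with $w$ a non-constant infinite word fails labelled wqo. Here I would use $2$-element (or constant-size) label sets to "mark" a single vertex in each of an infinite family of induced subgraphs of $\P(w,H)$ so as to destroy any embedding. Concretely, take induced paths (or the $w,H$-analogue) of increasing length, label one endpoint with a special label $\star > \circ$ and all other vertices with $\circ$; then a labelled embedding of the $m$-vertex graph into the $m'$-vertex graph must send the $\star$-endpoint to the $\star$-endpoint, which, together with the induced-subgraph constraints on a path-like graph, forces $m = m'$. So this infinite family is a labelled antichain, using only a two-element (hence trivially wqo) label poset. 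One has to verify that $\P(w,H)$ genuinely contains arbitrarily long such path-like graphs; this is true because $w$ is almost periodic and non-eventually-constant (both guaranteed since $\C$ lies above the Bell numbers and $k_\C<\infty$), so $w$ contains two distinct letters infinitely often and the corresponding $G_{w,H}$-graphs on long blocks of consecutive indices are connected "chains" of unbounded order.

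The main obstacle I anticipate is not the antichain idea itself but verifying that the candidate graphs are genuinely pairwise incomparable under the $G_{w,H}$ adjacency rule — the twist that "distance $>1$" indices are adjacent iff $w_{i_j}w_{i_{j'}}\in E(H)$ means that an induced subgraph is obtained by \emph{deleting} indices, which can turn a distance-$1$ pair into a distance-$>1$ pair and thereby flip its adjacency. So an embedding of one $\P(w,H)$-graph into another need not be "monotone" in the naive sense, and the rigidity argument has to account for this. The clean way around it is to choose the index sequences defining the antichain members to be \emph{intervals} of consecutive integers (possibly plus a bounded number of far markers), so that within the interval the graph is exactly the "$w,H$-path" and the only freedom an embedding has is a shift; almost-periodicity of $w$ then controls which shifts are even possible and forces lengths to match. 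I would carry this out first for the simplest non-trivial $w$ (e.g. eventually alternating), then note that the general almost-periodic / periodic case is identical once the right interval-of-indices is selected.
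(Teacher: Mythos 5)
There is a genuine gap, and it is in the heart of the unlabelled case. Your plan for part (ii) is to exhibit an infinite antichain \emph{inside} $\P(w,H)$, built from interval-of-indices "path-like" graphs plus a few far-away marker vertices. No such antichain exists: every class $\P(w,H)$ with $w$ almost periodic (in particular periodic) is itself well-quasi-ordered -- this is exactly the content of Theorem~\ref{thm:pwh-boundary}, which shows $\P(w,H)$ is a \emph{boundary} class, and boundary classes are by necessity wqo. A concrete sanity check: for $w=(01)^\infty$ and $H$ edgeless, $\P(w,H)$ is the class of disjoint unions of paths; your candidates (an interval plus distant markers) are just a path together with isolated vertices, and these are trivially wqo. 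The adjacency "flip" you flag as the main obstacle is precisely the mechanism that kills any internal antichain: deleting indices breaks the linear structure into shorter linear pieces, so every member of $\P(w,H)$ embeds into a sufficiently long one. The antichain has to consist of graphs \emph{outside} $\P(w,H)$ all of whose proper induced subgraphs lie inside it. The paper builds these by taking a prefix of $w$ of the form $aba$ (available by almost periodicity), forming $G_\ell=G_{w,H}(1,\dots,\ell)$, and \emph{complementing} the adjacency between the first and last vertex: the sparsification of $G_\ell^+$ is a cycle while sparsifications of graphs in $\P(w,H)$ are paths, so $G_\ell^+\notin\P(w,H)$ by Lemma~\ref{embedding}, yet deleting any early vertex returns you to $\P(w,H)$. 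This produces infinitely many minimal forbidden graphs of $\P(w,H)$, and it is the \emph{finite definability} of $\C$ -- not periodicity of $w$ -- that then forces all but finitely many of them into $\C$ (a minimal forbidden graph of $\P(w,H)$ can only be excluded from $\C$ if it equals one of the $G_i$). Your proposal never constructs any graph outside $\P(w,H)$ and never uses finite definability, so the unlabelled conclusion cannot be reached along your route.

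The labelled part (i) is closer to workable and resembles an earlier draft of the authors' argument (labelled long "word-paths" form a labelled antichain, with Lemma~\ref{embedding} controlling embeddings via sparsification), but as written it also has a defect: labelling only one endpoint of $G_{w,H}(1,\dots,m)$ with $\star$ does not obstruct the embedding into $G_{w,H}(1,\dots,m')$ as an initial segment, which sends $\star$ to $\star$; you need to mark both ends (so the image must be an induced sub-path containing both endpoints of the longer path, forcing equal lengths), and you still need the sparsification machinery to rule out non-monotone embeddings. Note also that the published proof gets the non-finitely-defined case of the labelled statement for free from Lemma~\ref{lemma:lwqo-fd}, and derives the finitely defined case from the unlabelled antichain, so repairing part (ii) is unavoidable either way.
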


The starting point is Theorem~\ref{factors}, which guarantees that any such class contains some class $\P(w,H)$ where $w$ is almost periodic, and $H$ has at most $\ell_\C$ vertices. Our proof will show that the classes $\P(w,H)$ are boundary classes for well-quasi-ordering. In order to do this, we will show that we can use the method of sparsification to prove that some (specific) infinite sets of $(\ell,d)$-graphs form infinite antichains, by relating how one such graph can embed in another to embeddings in their sparsifications. We begin by reviewing a few preliminary results about $(\ell,d)$-graphs.

\begin{lemma} [{Balogh et al~\cite[Lemma 10]{balogh:a-jump-to-the-bell:}}]
\label{symdif1}
Let $G$ be a graph with an $(\ell,d)$-partition $\pi$. If two vertices $u,v \in V(G)$ are in the same bag $V\in\pi$,
then the symmetric difference of their neighbourhoods $N(u) \ominus N(v)$ is of size at most $2\ell d$.
\end{lemma}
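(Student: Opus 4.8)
The plan is to bound $N(u)\ominus N(v)$ bag by bag. Write $\pi=\{V_1,\dots,V_{\ell'}\}$ with $\ell'\le\ell$, and suppose $u,v$ both lie in the bag $V_i$. Since
\[
N(u)\ominus N(v)=\bigcup_{j=1}^{\ell'}\bigl((N(u)\ominus N(v))\cap V_j\bigr)
\]
and $\ell'\le\ell$, it suffices to show that every bag $V_j$ contributes at most $2d$ vertices, i.e.\ that at most $2d$ vertices of $V_j$ are adjacent to exactly one of $u$ and $v$.

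So I would fix a bag $V_j$ and use the defining property of an $(\ell,d)$-partition: the pair $(V_i,V_j)$ is either $d$-sparse or $d$-dense. If $(V_i,V_j)$ is $d$-sparse, then $\Delta(V_i,V_j)\le d$ gives $|N(u)\cap V_j|\le d$ and $|N(v)\cap V_j|\le d$; every vertex of $V_j$ adjacent to exactly one of $u,v$ lies in $N(u)\cup N(v)$, so there are at most $2d$ of them. If $(V_i,V_j)$ is $d$-dense, I would argue symmetrically from $\overline\Delta(V_i,V_j)\le d$, which gives $|\overline N(u)\cap V_j|\le d$ and $|\overline N(v)\cap V_j|\le d$: a vertex of $V_j$ adjacent to exactly one of $u,v$ is a non-neighbour of the other, hence lies in $\overline N(u)\cup\overline N(v)$, again at most $2d$ vertices. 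Summing the bound $2d$ over the at most $\ell$ bags then yields $|N(u)\ominus N(v)|\le 2\ell d$.

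The one place that calls for a little care — and about the only thing there is to watch in this argument — is the bag $V_i$ containing $u$ and $v$ themselves: since $\overline N(x)=V(G)\setminus(N(x)\cup\{x\})$ excludes $x$, when $(V_i,V_i)$ is $d$-dense the vertices $u,v$ are not automatically accounted for by the bound on $\overline N(\cdot)\cap V_i$ and must be examined directly (each of $u,v$ is adjacent to exactly one of $\{u,v\}$ precisely when $uv\in E(G)$, which is where any tightness in the constant comes from). A short case split on whether $uv\in E(G)$ disposes of this bag in the same way. Beyond this bookkeeping I anticipate no real obstacle: the statement is essentially immediate once the notions of $d$-sparse and $d$-dense pair are unwound.
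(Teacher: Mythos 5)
The paper does not prove this lemma itself; it is quoted verbatim from Balogh, Bollob\'as and Weinreich, so there is no internal proof to compare against. Your bag-by-bag decomposition is nonetheless the natural argument, and it is sound for every off-diagonal bag: for $j\neq i$ each vertex of $(N(u)\ominus N(v))\cap V_j$ is distinct from $u$ and $v$ and lies in $(N(u)\cup N(v))\cap V_j$ or in $(\overline{N}(u)\cup\overline{N}(v))\cap V_j$ according as $(V_i,V_j)$ is $d$-sparse or $d$-dense, giving at most $2d$ per bag; the diagonal $d$-sparse case is equally clean, since there $u,v$ themselves are captured by $N(u)\cup N(v)$.

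The point you flag at the end, however, is a genuine obstruction, and the assertion that ``a short case split on whether $uv\in E(G)$ disposes of this bag in the same way'' is exactly where the argument breaks. If $(V_i,V_i)$ is $d$-dense and $uv\in E(G)$, then $u\in N(v)\setminus N(u)$ and $v\in N(u)\setminus N(v)$, yet neither $u$ nor $v$ belongs to $\overline{N}(u)\cup\overline{N}(v)$, while the vertices $w\in V_i\setminus\{u,v\}$ of the symmetric difference can already account for the full $2d$; so the bag $V_i$ may contribute $2d+2$ and your argument only yields $2\ell d+2$. This is not a removable inefficiency: take $G=C_4$ with the single bag $V_1=V(G)$, a valid $(1,1)$-partition since every vertex has exactly one non-neighbour, and let $u,v$ be adjacent; then $N(u)\ominus N(v)=V(G)$ has size $4>2=2\ell d$. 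Hence either the statement must be read with the common convention that the symmetric difference is taken outside $\{u,v\}$ (i.e.\ one counts the vertices distinguishing $u$ from $v$), or the constant should be $2\ell d+2$. Either reading is harmless for every application in this paper (in Lemma~\ref{embedding} only the order of magnitude enters), but your proof needs to say explicitly which it adopts rather than claiming the diagonal dense case goes through ``in the same way''.
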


For $t\geq 2d+1$, given any $t$-strong $(\ell,d)$-partition $\pi = \{V_1, V_2, \ldots, V_{\ell'}\}$ we define an equivalence relation~$\sim$ 
on the bags by putting $V_i \sim V_j$ if and only if for each~$k$, either $V_k$~is $d$-dense with respect to both $V_i$ and~$V_j$,
or $V_k$~is $d$-sparse with respect to both $V_i$ and~$V_j$.
Let us call a partition $\pi$ \emph{prime} if all its $\sim$-equivalence classes are of size~1.
If the partition $\pi$ is not prime, let $p(\pi)$ be the partition consisting of unions of bags in the $\sim$-equivalence classes
for~$\pi$.

\begin{lemma}[{Atminas et al~\cite[Lemma 2.3]{atminas:deciding-the-bell:}}] \label{symdif0}
Let $t\geq 2d+1$ and consider any $t$-strong $(\ell,d)$-graph $G$ with $t$-strong partition $\pi$.
Then $p(\pi)$ is a $t$-strong $(\ell , \ell d)$-partition. 
\end{lemma}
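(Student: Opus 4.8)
The plan is to verify the three defining properties of a $t$-strong $(\ell,\ell d)$-partition for $p(\pi)$; the first two are essentially free, and only the ``sparse or dense'' condition needs work. Write $\pi=\{V_1,\dots,V_{\ell'}\}$ with $\ell'\le\ell$ and $|V_i|\ge t$ for each $i$. Since $\pi$ is an $(\ell,d)$-partition every pair of its bags is $d$-sparse or $d$-dense, and since $t\ge 2d+1$ these options are mutually exclusive (as noted in the text), so each pair $(V_i,V_j)$ has a well-defined \emph{type}. By definition the bags of $p(\pi)$ are the sets $W_C=\bigcup_{V\in C}V$, one for each $\sim$-equivalence class $C$ of the set of bags; as these classes partition $\{V_1,\dots,V_{\ell'}\}$, the family $p(\pi)$ is a partition of $V(G)$ coarsening $\pi$. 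Hence $p(\pi)$ has at most $\ell'\le\ell$ bags, each a union of bags of $\pi$ and so of size at least $t$. It remains to show that for any two (possibly equal) $\sim$-classes $C,C'$ the pair $(W_C,W_{C'})$ is $\ell d$-sparse or $\ell d$-dense.

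The key point is a uniformity statement for types. Straight from the definition of $\sim$: for a fixed $\sim$-class $D$ and an arbitrary bag $V_k$ of $\pi$, the type of $(V_k,V)$ is the same for all $V\in D$ (because $V\sim V'$ in $D$ means precisely that $V_k$ has the same type against $V$ and against $V'$, for every $k$). Using the symmetry of $\Delta$ and $\overline\Delta$ in their arguments, I would bootstrap this to the statement actually needed: for $\sim$-classes $C,C'$ (allowing $C=C'$), all pairs $(V,V')$ with $V\in C$, $V'\in C'$ have one common type $\tau=\tau(C,C')$. Proof: fix $V_0\in C$; the previous statement applied to the class $C'$ and the bag $V_0$ shows $V_0$ has a single type $\tau$ against every bag of $C'$; and for any other $V\in C$ and any $V'\in C'$, $V\sim V_0$ gives $(V,V')$ the same type as $(V_0,V')$, which is $\tau$. (The case $C=C'$ is included here, which is what lets us deal uniformly with pairs of $\pi$-bags sitting inside one merged bag $W_C$.)

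Given this, the estimate is immediate. If $\tau(C,C')$ is ``sparse'', then for any $u\in W_C$, say $u\in V\in C$, each $V'\in C'$ has $|N(u)\cap V'|\le d$ since $(V,V')$ is $d$-sparse, so $|N(u)\cap W_{C'}|\le |C'|\cdot d\le\ell d$; symmetrically $|N(w)\cap W_C|\le |C|\cdot d\le\ell d$ for every $w\in W_{C'}$, whence $\Delta(W_C,W_{C'})\le\ell d$. If $\tau(C,C')$ is ``dense'', the same computation with non-neighbourhoods in place of neighbourhoods gives $\overline\Delta(W_C,W_{C'})\le\ell d$. Thus every pair of bags of $p(\pi)$ is $\ell d$-sparse or $\ell d$-dense, and $p(\pi)$ is a $t$-strong $(\ell,\ell d)$-partition.

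I do not expect a genuine difficulty here; the one thing to handle carefully is the bookkeeping around the equivalence relation — formulating the uniformity statement so that the diagonal case $C=C'$ is automatically covered (so that pairs of $\pi$-bags inside one merged bag are not a separate case), and invoking the mutual exclusivity of $d$-sparse and $d$-dense (valid since $t\ge 2d+1$) so that ``the type of a pair'' makes sense throughout.
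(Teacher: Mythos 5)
Your proof is correct: the coarsened partition trivially inherits the bounds on the number and sizes of bags, the $\sim$-relation (together with the mutual exclusivity of $d$-sparse and $d$-dense when $t\ge 2d+1$) gives a well-defined common type for every pair of $\sim$-classes including the diagonal case, and summing the $d$-bounds over at most $\ell$ constituent bags yields the $\ell d$ bound. The paper itself imports this lemma from Atminas et al.\ without reproving it, and your argument is the standard one used there, so there is nothing to compare beyond noting agreement.
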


\begin{lemma}[{Atminas et al~\cite[Lemma 2.5]{atminas:deciding-the-bell:}}]\label{symdif2}
Let $G$ be a graph with a $t$-strong $(\ell,d)$-partition $\pi$ with $t\geq 2d+1$.
If two vertices $u, v \in V(G)$ belong to different bags of the partition $p(\pi)$,
then the symmetric difference of their neighbourhoods $N(x) \ominus N(y)$ is of size at least $t-2d$.  
\end{lemma}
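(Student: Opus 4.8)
The plan is, given vertices $u$ and $v$ lying in different bags of $p(\pi)$, to write down explicitly at least $t-2d$ vertices of $G$ each adjacent to exactly one of $u$ and $v$; such vertices lie in $N(u)\ominus N(v)$.

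First I would unwind the hypothesis. Let $V_i$ and $V_j$ be the bags of $\pi$ containing $u$ and $v$. Since $u$ and $v$ lie in distinct bags of $p(\pi)$, the bags $V_i$ and $V_j$ lie in distinct $\sim$-classes, so $V_i\not\sim V_j$ (and in particular $V_i\neq V_j$). By the definition of $\sim$ there is therefore a bag $V_k$ of $\pi$ separating them: $V_k$ is $d$-dense with respect to one of $V_i,V_j$ and $d$-sparse with respect to the other. This is a genuine dichotomy, since $\pi$ is $t$-strong with $t\ge 2d+1$, so ``$d$-dense'' and ``$d$-sparse'' are mutually exclusive. After possibly swapping the roles of $u$ and $v$, assume $V_k$ is $d$-sparse with respect to $V_i$ and $d$-dense with respect to $V_j$.

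The two estimates I need follow straight from the definitions of $\Delta$ and $\overline\Delta$ together with the fact that $|V_k|\ge t$. Since $(V_i,V_k)$ is $d$-sparse, $|N(u)\cap V_k|\le d$. Since $(V_j,V_k)$ is $d$-dense, $v$ misses at most $d$ vertices of $V_k$, so $|N(v)\cap V_k|\ge |V_k|-d\ge t-d$ provided $v\notin V_k$. In that case $\bigl(N(v)\cap V_k\bigr)\setminus N(u)$ has at least $t-2d$ elements, each of which is a neighbour of $v$ and a non-neighbour of $u$, and hence lies in $N(u)\ominus N(v)$. This settles every configuration in which $v\notin V_k$; in particular it covers the case $u\in V_k$, which is harmless, since then $V_k=V_i$, the pair $(V_i,V_i)$ is $d$-sparse, and $u\notin N(u)$, so the bound $|N(u)\cap V_k|\le d$ is unaffected.

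The one remaining situation is $v\in V_k$, which forces $V_k=V_j$; here a naive count of $N(v)\cap V_k$ loses one vertex (namely $v$, which is not a neighbour of itself), so one must work slightly harder. I would try to recover the missing vertex by additionally exploiting the pair $(V_i,V_j)$, whose type is now forced: $v$ then has few neighbours in $V_i$, while $u$'s neighbourhood inside $V_i$ is controlled by whether $(V_i,V_i)$ is $d$-dense or $d$-sparse, producing a second batch of vertices of $N(u)\ominus N(v)$ inside $V_i$, disjoint from those found in $V_k$; combining the two batches and invoking $t\ge 2d+1$ should then recover the bound $t-2d$. I expect this final degenerate case to be the main obstacle: it is the only place where a direct translation of the sparseness/density inequalities does not immediately suffice, and it is where the hypotheses $t\ge 2d+1$ and $t$-strongness are genuinely used.
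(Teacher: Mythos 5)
Your main case ($v\notin V_k$) is correct and complete, and is surely the intended argument; note that the paper does not actually prove this lemma but imports it from Atminas et al., where it is stated only for bags of size $t=5\times 2^{\ell}d$ (with the same form of bound, $t-2d$). The gap is precisely the degenerate case you flag, and it cannot be closed along the lines you sketch. When $V_k=V_j$, the dense pair $(V_j,V_j)$ only gives $|N(v)\cap V_j|\ge |V_j|-d-1$ (the vertex $v$ itself is lost), so $V_j$ contributes only $t-2d-1$ elements of the symmetric difference. The missing element can be recovered when $u\sim v$ (then $v\in N(u)\setminus N(v)$ is an extra element), or from a second batch inside $V_i$ when $(V_i,V_i)$ is $d$-dense and $t\ge 2d+2$; but when $(V_i,V_i)$ and $(V_i,V_j)$ are both $d$-sparse and $u\not\sim v$, there is no second batch anywhere, and no argument will manufacture one.

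Indeed the statement as printed is false at $t=2d+1$. Take $d=1$, $V_1=\{u,a,b\}$, $V_2=\{v,m,e\}$ and edge set $\{ua,\,um,\,va,\,vm,\,me\}$. Then $\pi=\{V_1,V_2\}$ is a $3$-strong $(2,1)$-partition in which $(V_1,V_1)$ and $(V_1,V_2)$ are $1$-sparse while $(V_2,V_2)$ is $1$-dense and not $1$-sparse; hence $V_1\not\sim V_2$, so $u$ and $v$ lie in different bags of $p(\pi)$, yet $N(u)=N(v)=\{a,m\}$ and the symmetric difference is empty rather than of size at least $t-2d=1$. What your argument genuinely establishes in all cases is the bound $t-2d-1$, with the full $t-2d$ whenever some witness bag $V_k$ is distinct from the bag containing the vertex on its dense side. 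The weaker bound is all the paper needs: Lemma~\ref{embedding} applies the result with $t=5\times 2^{\ell}d$ and only requires $t-2d-1>2\ell^2 d$, which holds with ample room. So the right resolution is to prove and use $t-2d-1$, not to keep fighting the final case — it is not an obstacle but a counterexample.
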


Let $G$ and $H$ be graphs. An \emph{embedding} of $G$ into $H$ is an injective function $f:V(G)\rightarrow V(H)$ such that $uv\in E(G)$ if and only if $f(u)f(v)\in E(H)$. That is, an embedding is an instance of a mapping of $G$ into $H$ that witnesses $G$ as an induced subgraph of $H$. 

We are now ready to prove the result concerning embeddings that we require.  Note the condition that the graphs are $(5\times 2^\ell d)$-strong, which ensures that sparsification is unique regardless of which $(\ell,d)$-partition we take.

\begin{lemma} \label{embedding}
Suppose $G$ and $H$ are $(5\times 2^\ell d)$-strong $(\ell, d)$-graphs with corresponding partitions $\pi_G$ and $\pi_H$, respectively. Assume further that the partitions
$p(\pi_G)=\{V_1,\ldots, V_{\ell_G}\}$ and $p(\pi_H)=\{W_1,\ldots W_{\ell_H}\}$ 
have the same number $\ell'=\ell_G=\ell_H$ of bags. Then for every embedding $f$ of $G$ into~$H$, the following holds:

\begin{compactenum}[(1)]
\item There is a permutation $\sigma_f$ on $\{1,\dots,\ell_G\}$ such that $f(V_i) \subseteq W_{\sigma_f(i)}$;
\item The pair $(V_i, V_j)$ is $d$-dense if and only if the pair $(W_{\sigma_f(i)}, W_{\sigma_f(j)})$ is $d$-dense;
\item $f$ is an embedding of $\phi(G)$ into $\phi(H)$, i.e.\ $uv \in E(\phi(G))$ if and only if $f(u)f(v) \in E(\phi(H))$.  
\end{compactenum}
\end{lemma}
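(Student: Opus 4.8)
The plan is to exploit the two symmetric-difference estimates, Lemma~\ref{symdif1} and Lemma~\ref{symdif2}, together with the size lower bound coming from the $(5\times 2^\ell d)$-strong hypothesis. Write $N=5\times 2^\ell d$, so that every bag of $\pi_G$ and $\pi_H$ (and hence of $p(\pi_G)$, $p(\pi_H)$) has at least $N$ vertices, and recall from Lemma~\ref{symdif0} that $p(\pi_G)$ and $p(\pi_H)$ are $N$-strong $(\ell,\ell d)$-partitions. The key numerical point is that $N = 5\cdot 2^\ell d$ makes the ``within a bag of $p(\pi)$'' bound $2\ell(\ell d) = 2\ell^2 d$ of Lemma~\ref{symdif1} (applied to the coarser partition $p(\pi)$ with parameter $\ell d$) strictly smaller than the ``across bags of $p(\pi)$'' bound $N-2\ell d \geq 3\cdot 2^\ell d$ of Lemma~\ref{symdif2}; so ``same bag of $p(\pi)$'' is detected purely by the graph structure, via the size of symmetric differences of neighbourhoods, once we have enough vertices to compare against. (One should double-check the constant $5$ is what is actually needed here; if not, the statement is really about whatever strength constant guarantees this separation, and the paper's choice of $5\cdot2^\ell d$ was surely made precisely so that this works.)

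For part (1), I would argue as follows. Take two vertices $u,v$ in the same bag $V_i$ of $p(\pi_G)$. By Lemma~\ref{symdif1} (applied to $p(\pi_G)$, an $(\ell,\ell d)$-partition) their neighbourhoods in $G$ differ in at most $2\ell^2 d$ places. Since $f$ is an embedding, the neighbourhoods of $f(u)$ and $f(v)$ inside $f(V(G))$ differ in at most $2\ell^2 d$ places; but $|V(G)|$ may be small, so I instead compare $f(u),f(v)$ against a large set. Here is where strength is used: every bag $W_k$ of $p(\pi_H)$ has $\geq N$ vertices, and $f(V(G)) \cap W_k$, $f(V(G))\cap W_{k'}$ for distinct bags are governed by Lemma~\ref{symdif2}. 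Concretely, suppose $f(u)\in W_a$ and $f(v)\in W_b$ with $a\neq b$. Then by Lemma~\ref{symdif2} the symmetric difference $N(f(u))\ominus N(f(v))$ in $H$ has size $\geq N - 2\ell d$. Now I need to pull this large discrepancy back into $f(V(G))$: this is the step that needs care, because a priori the discrepancy could live entirely outside the image. The way around this is to use that $f(V(G))$ contains \emph{all} of $G$, in particular all of $V_i$ — and the vertices of $V_i$ are numerous ($\geq N$) and all agree with $u$ and with $v$ up to $2\ell^2 d$; so their images form $\geq N$ vertices each of which has nearly the same neighbourhood as $f(u)$ and as $f(v)$ within the image. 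If $f(u),f(v)$ lay in different bags of $p(\pi_H)$, Lemma~\ref{symdif2} forces those $\geq N$ images to split their adjacency to $f(u)$ versus $f(v)$ in a way incompatible (counting) with the $2\ell^2 d$ bound — contradiction. Hence $f$ maps each bag $V_i$ into a single bag of $p(\pi_H)$; injectivity of $f$ and $\ell_G=\ell_H$ then forces the induced map on bags to be a bijection, giving the permutation $\sigma_f$.

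For part (2), once $f(V_i)\subseteq W_{\sigma_f(i)}$ and $f(V_j)\subseteq W_{\sigma_f(j)}$ with both source bags large, the density type of the pair $(V_i,V_j)$ is read off by: if $(V_i,V_j)$ is $d$-dense then every $u\in V_i$ has $\leq d$ non-neighbours in $V_j$, so its image $f(u)$ has $\leq d$ non-neighbours in $f(V_j)\subseteq W_{\sigma_f(i)}$'s partner bag; since $|f(V_j)|\geq N > 2d$, the pair $(W_{\sigma_f(i)},W_{\sigma_f(j)})$ cannot be $d$-sparse, hence (as $p(\pi_H)$ is $N$-strong with $N\geq 2(\ell d)+1$, making $d$-dense and $d$-sparse mutually exclusive) it is $d$-dense. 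The reverse implication is identical with the roles of ``dense'' and ``sparse'' swapped, using $\overline\Delta$ in place of $\Delta$; and symmetry in $G\leftrightarrow H$ (every pair in an $(\ell,d)$-partition is $d$-dense or $d$-sparse) upgrades ``if'' to ``if and only if''. Part (3) is then a bookkeeping consequence: $\phi(G)$ flips exactly the pairs $(V_i,V_j)$ that are $d$-dense, and $\phi(H)$ flips exactly the pairs $(W_{\sigma_f(i)},W_{\sigma_f(j)})$ that are $d$-dense, and by (2) these correspond under $\sigma_f$; so for $u\in V_i$, $v\in V_j$, the edge/non-edge status is flipped between $G$ and $\phi(G)$ precisely when it is flipped between $H$ and $\phi(H)$ along $f$, whence $uv\in E(\phi(G)) \iff f(u)f(v)\in E(\phi(H))$. (For $u,v$ in the same bag $V_i$: the pair $(V_i,V_i)$ is either $d$-sparse, in which case nothing is flipped on either side, or — since $G$ is $N$-strong and within-bag edges are bounded by $2\ell d < N$ — it is $d$-sparse in all cases where $i=j$ up to taking complements consistently; this degenerate case must be checked but is routine.)

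The main obstacle I anticipate is the pull-back step in part (1): converting the ``global'' large symmetric difference guaranteed by Lemma~\ref{symdif2} in $H$ into a contradiction with the ``local'' small symmetric difference in $G$, given that the image $f(V(G))$ need not be large relative to $d$ a priori. The resolution is exactly the observation that $f(V(G))$ always contains the whole of the large bag $V_i$, so we have a guaranteed supply of $\geq N$ vertices in the image on which to test the discrepancy, and $N$ was chosen large enough to beat $2\ell^2 d$; making the inequalities line up cleanly (and verifying that the constant $5$ — rather than some other absolute constant — suffices) is the only genuinely delicate bit.
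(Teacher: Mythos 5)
Your parts (2) and (3) are essentially the paper's argument and are fine, but part (1) has a genuine gap, and it sits exactly at the step you yourself flagged as delicate. You try to prove directly that two vertices $u,v$ in the same bag $V_i$ of $p(\pi_G)$ have images in the same bag of $p(\pi_H)$, by pulling the large symmetric difference guaranteed by Lemma~\ref{symdif2} (applied in $H$) back into the image of $f$ using the $\geq 5\times 2^\ell d$ vertices of $V_i$ as witnesses. This cannot work: for $w\in V_i$ the image $f(w)$ lies in $N(f(u))\ominus N(f(v))$ if and only if $w\in N(u)\ominus N(v)$ (because $f$ is an embedding), and $|N(u)\ominus N(v)|\leq 2\ell^2 d$ by Lemma~\ref{symdif1}; so at most $2\ell^2 d$ of your witnesses ever land in the discrepancy set, which is perfectly consistent with that set having size at least $5\times2^\ell d-2\ell d$ --- the bulk of it simply lies outside $f(V(G))$. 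No contradiction is available in this direction. Moreover, even if this step were repaired, your closing deduction that ``injectivity of $f$ and $\ell_G=\ell_H$ force the induced map on bags to be a bijection'' is a non sequitur: nothing you have proved prevents two distinct source bags from mapping (with disjoint vertex images) into the same target bag while another target bag is missed entirely.

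The paper runs the implication in the opposite, and workable, direction: it shows that vertices $v_i\in V_i$ and $v_j\in V_j$ from \emph{different} bags of $p(\pi_G)$ cannot map into the \emph{same} bag of $p(\pi_H)$. There the large symmetric difference comes from Lemma~\ref{symdif2} applied in the \emph{source} $G$, and it pushes forward under the embedding via the containment $f(N(v_i)\ominus N(v_j))\subseteq N(f(v_i))\ominus N(f(v_j))$ --- a containment that always holds for embeddings, unlike the pull-back you would need --- contradicting the bound $2\ell(\ell d)=2\ell^2 d$ from Lemma~\ref{symdif1} applied in the target. Setting $S(i)=\{k: f(V_i)\cap W_k\neq\emptyset\}$, this shows the sets $S(i)$ are pairwise disjoint, whence $\ell_H\geq\sum_i|S(i)|\geq\ell_G$; equality then forces $|S(i)|=1$ for every $i$, which delivers both halves of (1) (each source bag maps into a single target bag, and distinct source bags go to distinct target bags) in one stroke. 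You should restructure part (1) along these lines; parts (2) and (3) can then stand as you wrote them.
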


\begin{proof}
Consider $G$ and $H$ as in the statement and an embedding $f : G \rightarrow H$.
For $i \in \{1,\dots,\ell_G\}$ put $S(i)=\{k : f(V_i) \cap W_k \neq \emptyset\}$. 

We claim that for $i \neq j$ we have $S(i) \cap S(j) = \emptyset$.
Suppose, for the sake of contradiction, that for some $i \neq j$
we can find $k\in S(i)\cap S(j)$.
Then there are two vertices $v_i \in V_i$, $v_j \in V_j$ such that $f(v_i), f(v_j) \in W_k$.
Now $p(\pi_H)$ is an $(\ell, \ell d)$-partition by Lemma~\ref{symdif0} and $f(v_i)$, $f(v_j)$ are two vertices in the same bag of $p(\pi_H)$.
Thus by Lemma~\ref{symdif1} we conclude that $|N(f(v_i)) \ominus N(f(v_j))| \leq 2\ell(\ell d)=2\ell^2d$.
As $f$~is an embedding, we have $f(N(v_i) \ominus N(v_j)) \subseteq N(f(v_i)) \ominus N(f(v_j))$;
hence $|N(v_i) \ominus N(v_j)| \leq 2\ell^2d$.
However, $v_i$ and $v_j$ belong to different bags of~$p(\pi_G)$, so by Lemma~\ref{symdif2}
we obtain that $|N(v_i) \ominus N(v_j)| \geq 5 \times 2^\ell d-2d$. This implies $5 \times 2^\ell d-2d \leq 2\ell^2d$ which is a contradiction.

So for all $i \neq j$ we have $S(i) \cap S(j) = \emptyset$.
This implies that \[\ell_H \ge |S(1) \cup \cdots \cup S({\ell_G})|=|S(1)|+\cdots+|S({\ell_G})| \ge \ell_G.\]
By assumption, $\ell_G=\ell_H$, so $|S(i)|=1$ for each~$i$.
Therefore there exists a (unique) permutation~$\sigma_f$ of~$[\ell_G]$ such that $f(V_i) \subseteq W_{\sigma_f(i)}$ for each~$i$,
which proves~(1).

It is not hard to see that $V_i$ is $d$-dense with respect to $V_j$
if and only if $W_{\sigma_f(i)}$ is $d$-dense with respect to $W_{\pi_f(j)}$.
Indeed, if one pair is $d$-dense and the other $d$-sparse,
then since $f(V_i) \subseteq W_{\sigma_f(i)}$ and $f(V_j) \subseteq W_{\sigma_f(j)}$
we have that $(V_i, V_j)$ is both $d$-sparse and $d$-dense. This implies that $|V_i| \leq 2d+1$, a contradiction which establishes~(2).

Finally, to show (3), consider any $u,v \in V(G)$, $u \in V_i$ and $v \in V_j$.
Then by definition $uv \in E(\phi(G))$ if and only if $uv \in E(G)$ and $(V_i, V_j)$ is $d$-sparse, or $uv \notin E(G)$ and $(V_i, V_j)$ $d$-dense.
But as $uv \in E(G)$ if and only if $f(u)f(v) \in E(H)$,
and $(V_i, V_j)$ is $d$-sparse ($d$-dense, respectively) if and only if
$(W_{\sigma_f(i)}, W_{\sigma_f(j)})$ is $d$-sparse ($d$-dense, respectively),
we conclude that the statement is equivalent to saying that $f(u)f(v) \in E(\phi(H))$.
\end{proof}

%For two graphs $G$ and $H$ let $\mathcal{E}_{G,H}$ denote the set of embeddings of $G$ in $H$. Then from the lemma above we obtain:
%
%\begin{proposition}
%Let $G$ and $H$ be $(5\times 2^\ell d)$-strong $(\ell,d)$-graphs with corresponding partitions $\pi_G$ and $\pi_H$ such that 
%$p(\pi_G)$ has the same number of bags as $p(\pi_H)$. Then $\mathcal{E}_{G, H} \subseteq \mathcal{E}_{\phi(G), \phi(H)}$.\qed
%\end{proposition}  

\paragraph{Minimal antichains above the Bell numbers}

We know that the classes $\P(w,H)$ where $w$ is almost periodic are the minimal classes with finite distinguishing number that lie above the Bell numbers. We now show that every such class $\P(w,H)$ is a boundary class. First, we find an infinite collection of forbidden graphs of any such class $\P(w,H)$.

\begin{lemma}\label{lem:pwh-infinite}
Every class $\P(w,H)$ with $w$ almost periodic is defined by infinitely many minimal forbidden elements.
\end{lemma}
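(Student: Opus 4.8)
The plan is to show that $\P(w,H)$ contains an infinite antichain of minimal forbidden elements; since $\P(w,H)$ is hereditary and not finitely defined iff its set of minimal forbidden graphs is infinite, it suffices to exhibit infinitely many pairwise incomparable graphs, each of which is a \emph{minimal} obstruction (i.e.\ every proper induced subgraph lies in $\P(w,H)$). The natural candidates come from the ``defect'' structure of $G_{w,H}$: a graph $G_{w,H}(i_1,\dots,i_k)$ is determined by the multiset of gaps $i_{j+1}-i_j$ together with the letters $w_{i_j}$, and the crucial feature is that consecutive indices (gap $1$) use the \emph{complement} of $H$ on that edge, whereas non-consecutive indices use $H$. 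So I would construct, for each $m$, a graph $F_m$ that looks like a long ``path-like'' configuration (all gaps $1$, following a factor of $w$) but with one small, controlled modification — e.g.\ a single extra vertex attached according to $H$ rather than $\overline H$, or a ``wrong'' gap inserted somewhere — so that $F_m$ itself cannot be realised by any increasing sequence in $w$, while every proper induced subgraph can be, using almost periodicity of $w$ to find the needed factors arbitrarily far along $w$.

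Concretely, here are the steps. (i) Fix an attention to an almost periodic $w$ over $V(H)$ with $H$ prime; almost periodicity gives, for any finite word $u$, a constant $c_u$ so that every length-$c_u$ factor contains $u$, which is exactly what is needed to find ``room'' to realise prescribed finite patterns. (ii) Define the candidate obstructions $F_m$: take a path-segment realisation of a suitable long factor of $w$, of length growing with $m$, and perturb it in one spot in a way that is incompatible with the local rule of $G_{w,H}$ (I expect the perturbation to force two vertices to be adjacent-by-$H$ while being ``distance $1$'' in index order, or vice versa, which cannot happen). (iii) Show $F_m \notin \P(w,H)$: any embedding as $G_{w,H}(i_1,\dots,i_k)$ would have to match the perturbed spot to a gap-$1$ pair, contradicting how the edge there was defined; here one uses primeness of $H$ (so that $H$ and $\overline H$ genuinely differ on the relevant pair of letters) and the combinatorial rigidity of long path-segments to pin down which $i_j$'s must be consecutive. (iv) Show every proper induced subgraph of $F_m$ lies in $\P(w,H)$: deleting any vertex either removes the perturbation (leaving a path-segment, realisable by a factor of $w$ found via almost periodicity) or shortens one side enough that the remaining configuration is again realisable; this is where almost periodicity does the real work, guaranteeing the required factors exist somewhere in $w$. (v) Show the $F_m$ are pairwise incomparable: arrange that $F_m$ has a unique ``shape'' — for instance the total number of vertices, or the position/length pattern around the perturbation — so that $F_m$ cannot embed in $F_{m'}$ for $m\neq m'$; since each $F_m$ is already a minimal obstruction, incomparability among them follows once we know none embeds in another, which we can force by a simple size/structure invariant.

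The main obstacle I anticipate is step (iv) together with the rigidity needed in step (iii): proving that a long path-segment of $G_{w,H}$ forces its realising indices to be consecutive (so that the one perturbation really is ``illegal'') requires a careful local analysis of which pairs of vertices in $G_{w,H}(i_1,\dots,i_k)$ can be adjacent when gaps exceed $1$ — i.e.\ controlling the ``distant'' edges governed by $H$ — and ruling out degenerate realisations. Primeness of $H$ should be exactly the hypothesis that prevents such degeneracies (it is presumably used to guarantee that the vertices of $G_{w,H}$ coming from different letters are genuinely distinguishable, so that an embedding cannot reshuffle letters freely). Once that rigidity lemma is in hand, the almost-periodicity arguments for realisability of proper subgraphs, and the invariant for incomparability, are routine.
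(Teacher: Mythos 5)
Your overall strategy (perturb a realisable segment of $w$ so that it becomes unrealisable, detect the unrealisability via a rigidity lemma, and use almost periodicity to recover realisability of subgraphs) is the same as the paper's, and the rigidity you anticipate needing is exactly Lemma~\ref{embedding}: embeddings between strong $(\ell,d)$-graphs induce embeddings of their sparsifications. The paper's perturbation is to take a prefix $ab$ of $w$ (where $a$ recurs, so $w$ begins $aba\cdots$, and $a$ is long enough that sparsification is unique) and complement the edges between the first and last vertices; the sparsification of the perturbed graph $G_\ell^+$ is then a cycle while the sparsification of every $G_{w,H}(1,\dots,p)$ is a path, so $G_\ell^+\notin\P(w,H)$. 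This is a clean instance of your step (iii).

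The genuine gap is your step (iv). You claim each $F_m$ is itself a minimal obstruction, i.e.\ that \emph{every} proper induced subgraph is realisable, and your justification (``deleting any vertex either removes the perturbation or shortens one side enough'') does not hold: for the natural perturbations, deleting a vertex from the middle of the segment leaves the illegal feature intact, and there is no reason the resulting graph should lie in $\P(w,H)$. The paper does not prove full minimality and does not need to. Instead it exploits the $aba$ prefix structure: for each of the first $k$ vertices $i$ (where $k=|a|$), deleting $i$ from $G_\ell^+$ yields a graph isomorphic to $G_{w,H}(i+1,\dots,\ell+i-1)$ --- the complemented end-pair becomes a genuine consecutive pair after cyclically rotating into the second copy of $a$ --- so $G_\ell^+-i\in\P(w,H)$ for $1\le i\le k$. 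Hence any minimal forbidden graph contained in $G_\ell^+$ must use all of the vertices $1,\dots,k$ and so has order at least $k$. Iterating with $k$ growing past the order of the previously found obstruction produces minimal forbidden graphs of unbounded order, which is all that is needed (your step (v) is superfluous: distinct minimal forbidden graphs of a hereditary class are automatically incomparable). Without either a proof of full minimality of your $F_m$ or this weaker ``any minimal obstruction inside $F_m$ is large'' device, your argument does not close.
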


\begin{proof}
Consider the word $a=w_1w_2 \cdots w_k$, with $k$ large enough to contain every letter of
$H$ at least $10\times 2^{|H|}$ times, to guarantee that sparsification of the graph defined by $a$ is unique, since all such graphs are $(|H|,2)$-graphs.
Since $w$ is almost periodic, $a$ appears in $w$ infinitely often so we can pick some other occurrence 
$w_{\ell+1}w_{\ell+2} \cdots w_{\ell+k}$ for some $\ell>k$. 
Let $b=w_{k+1}w_{k+2} \cdots w_{\ell}$ be the word between these two occurrences. 
Consider the graph $G_{\ell}=G_{w, H}(1, 2, \ldots, \ell)$ formed from the vertices of the prefix $ab$, and
let $G_{\ell}^+$ denote the graph obtained from $G_{\ell}$ by complementing the edges between the vertices corresponding to $w_1$ and $w_{\ell}$. 

We claim that either $G_{\ell}^+$ is itself a minimal forbidden induced subgraph for $\P(w, H)$, or it contains one of order at least $k$. Suppose first that $G_{\ell}^+ \in \P(w,H)$. Then
there is some $p \geq \ell$ such that $G_{\ell}^+ $ is an induced subgraph of $G_p$. Fix any embedding $f$ of $G_\ell^+$ into $G_p$. By Lemma~\ref{embedding} it follows that $f$ is an embedding of
$\phi (G_{\ell})$ into an induced subgraph of $\phi (G_p)$. It is clear that $\phi(G_{\ell})$ is a cycle on $\ell$ vertices, but $\phi(G_p)$ is a path on $p$ vertices, and hence $f$ cannot embed $\phi(G_\ell)$ into $\phi(G_p)$, a contradiction, and so $G_\ell^+\not\in \P(w,H)$.

Now, suppose that $G_{\ell}^+$ is not itself a minimal forbidden subgraph of $\P(w,H)$. Observe that $G_{\ell}^+ - i \in \P(w, H)$ for every vertex $i$ satisfying $1\leq i\leq k$: this follows since $w$ begins with a prefix of the form $aba$, and thus $G_{\ell}^+ - i \cong G_{w,H}(i+1, i+2 \ldots, \ell+i-1)$. Thus, any minimal forbidden subgraph that is contained in $G_{\ell}^+$ (and there must be at least one) must contain all the vertices $1,\dots,k$, and thus has order at least $k$. 

To construct infinitely many minimal forbidden induced subgraphs we proceed as follows. 
We let $k_0=k$, $\ell_0=\ell$ and $F_0=G_\ell^+$ as above. $F_0$ is either itself a minimal forbidden induced subgraph, or it contains one of order at least $k_0$. For $i>0$, set $k_i=|F_{i-1}|+1$. 
Then as above we construct $G_{\ell_i}^+$ and let $F_i = G_{\ell_i}^+$.
Each $F_i$ is either itself a minimal forbidden induced subgraph of $\P(w,H)$, or it contains a minimal forbidden induced subgraph with at least $k_i$ vertices. In either case, since by construction $|F_{i-1}| < k_i$, the minimal forbidden induced subgraph contained in $F_i$ contains strictly more vertices than all the previous ones, completing the proof.
\end{proof}

\begin{theorem}\label{thm:pwh-boundary}
Every class $\P(w,H)$ with $w$ almost periodic is a boundary class for well-quasi-ordering by the induced subgraph relation. 
\end{theorem}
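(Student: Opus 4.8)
The plan is to reduce the statement, via Lemma~\ref{lem:pwh-infinite}, to three facts already available in this paper: that classes below the Bell numbers are labelled well-quasi-ordered (Theorem~\ref{belowwqo}); that labelled well-quasi-ordered classes are finitely defined (Lemma~\ref{lemma:lwqo-fd}); and that each class $\P(w,H)$ with $w$ almost periodic is a \emph{minimal} class above the Bell numbers---the characterisation of the minimal classes above the Bell numbers with finite distinguishing number obtained in~\cite{atminas:deciding-the-bell:} that underlies Theorem~\ref{factors}. Before that I would carry out a routine reduction to the case that $H$ is prime. If two letters $a,b\in V(H)$ have the same neighbourhood in $H$ (loops included), then deleting $b$ from $V(H)$ and replacing every occurrence of $b$ in $w$ by $a$ produces a graph $H'$ on $V(H)\setminus\{b\}$ and a word $w'$ over $V(H')$ with $G_{w,H}(i_1,\dots,i_k)=G_{w',H'}(i_1,\dots,i_k)$ for every increasing sequence of positive integers; this is checked from the definition of $G_{w,H}$ case by case, using $aa\in E(H)\Leftrightarrow ab\in E(H)\Leftrightarrow bb\in E(H)$. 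Hence $\P(w,H)=\P(w',H')$, and $w'$ is again almost periodic since every factor of $w'$ of a given length is the image of a factor of $w$ of that length. Iterating (and discarding letters not used by $w$), we may assume $H$ is prime.

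Now put $\C=\P(w,H)$. By Lemma~\ref{lem:pwh-infinite}, $\C$ has infinitely many minimal forbidden induced subgraphs, so $\C$ is infinitely defined and hence a limit class. It cannot lie below the Bell numbers: it would then be labelled well-quasi-ordered by Theorem~\ref{belowwqo}, and therefore finitely defined by Lemma~\ref{lemma:lwqo-fd}, contradicting the previous sentence. So $\C$ lies above the Bell numbers, and, being one of the \emph{minimal} classes above the Bell numbers by the characterisation in~\cite{atminas:deciding-the-bell:}, every proper hereditary subclass $\C'\subsetneq\C$ lies below the Bell numbers. Applying Theorem~\ref{belowwqo} and Lemma~\ref{lemma:lwqo-fd} to $\C'$, every proper hereditary subclass of $\C$ is (labelled) well-quasi-ordered and finitely defined, hence not a limit class by Lemma~\ref{limit-class}. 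Thus $\C$ is a limit class whose proper hereditary subclasses are all non-limit classes, that is, a boundary class. (One can check separately that $\C$ itself is well-quasi-ordered: an infinite antichain $\B\subseteq\C$ would, after removing one element $B$, leave the infinite antichain $\B\setminus\{B\}$ inside the proper---hence well-quasi-ordered---subclass $\C\cap\free(B)$.)

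The only substantial ingredient here is the quoted fact that $\P(w,H)$ with $w$ almost periodic (and $H$ prime) is minimal above the Bell numbers; once this is granted, the rest of the argument is soft. A self-contained route avoiding the Bell numbers would instead verify directly that the antichain $\A$ of minimal forbidden graphs of $\P(w,H)$ satisfies the conditions of Proposition~\ref{minimal-char}---that $\P(w,H)$ is well-quasi-ordered, that $\A^{\parallel G}$ is finite for every $G\in\P(w,H)$, and that $\A$ is complete (the last being a consequence of the second)---and then invoke Theorem~\ref{thm:min-boundary}. Well-quasi-ordering of $\P(w,H)$ can be obtained by combining Lemma~\ref{embedding} (and its converse: an embedding of $\phi(G)$ into $\phi(G')$ that respects bags and the dense/sparse pattern lifts to an embedding of $G$ into $G'$) with Higman's lemma, using that each sparsification $\phi(G)$ is a disjoint union of paths whose vertex labels spell out factors of $w$, and that the factors of an almost periodic word are well-quasi-ordered under the factor order (for each factor $u$, every factor of length at least the recurrence bound of $u$ contains $u$, so among factors of unbounded length any one is contained in all sufficiently long ones). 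Showing $\A^{\parallel G}$ finite is the delicate step of this alternative: one must prove that every sufficiently large minimal forbidden graph of $\P(w,H)$ contains the prescribed graph $G$, refining the construction in the proof of Lemma~\ref{lem:pwh-infinite}. I expect this step---or, on the first route, the quoted minimality above the Bell numbers---to be the main obstacle.
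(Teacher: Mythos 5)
Your argument is correct, and it reaches the conclusion by a slightly different formal route than the paper, while resting on exactly the same substantive inputs. The paper also starts from Lemma~\ref{lem:pwh-infinite} to see that $\P(w,H)$ is infinitely defined and hence a limit class, and also leans on the minimality of $\P(w,H)$ above the Bell numbers together with Theorem~\ref{belowwqo} and Corollary~\ref{finitedef}; but it then verifies boundedness via the criterion of Lemma~\ref{criterion}: for $G\in\P(w,H)$ it takes $M''$ to be the minimal forbidden graphs of $\P(w,H)$ not containing $G$, observes $\free(\{G\}\cup M'')=\free(\{G\}\cup M)$ is a proper subclass, hence below Bell, hence wqo and finitely defined, so $M''$ is finite. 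You instead argue directly from the definition of a boundary class: every proper hereditary subclass is below Bell, hence labelled wqo, hence finitely defined and wqo, hence not a limit class by Lemma~\ref{limit-class}. Your route is marginally cleaner in that it avoids Lemma~\ref{criterion} and the small bookkeeping needed to see that $M''$ is finite. Two further remarks. First, your reduction to prime $H$ is a genuine (and correct) addition: the theorem as stated does not assume $H$ prime, yet Lemma~\ref{lem:pwh-infinite} does, and the paper glosses over this point. Second, you correctly identify the one fact both proofs import from~\cite{atminas:deciding-the-bell:} --- that $\P(w,H)$ with $w$ almost periodic is minimal above the Bell numbers --- as the load-bearing ingredient; your sketched self-contained alternative via Proposition~\ref{minimal-char} and Theorem~\ref{thm:min-boundary} is not what the paper does, and you are right that establishing finiteness of $\A^{\parallel G}$ directly would be the hard step there.
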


\begin{proof}
Consider the set of minimal forbidden induced subgraphs $M$ for $\P(w,H)$, i.e.\ $\P(w,H)=\free(M)$. 
By Lemma~\ref{lem:pwh-infinite}, $M$ is infinite, which implies that $\P(w,H)$ is a limit class. By Lemma~\ref{criterion}, it suffices to show that for every $G\in\P(w,H)$, there exists a finite set $M''\subset M$ such that $\free(\{G\}\cup M'')$ is well-quasi-ordered.

To prove that $\P(w,H)$ is a minimal limit class, consider $G \in \P(w,H)$. Then let $M'=\{H_i \in M | H_i \supset G\}$ and
$M''=M-M'$. It is clear that $\{G\} \cup M''$ is an antichain and hence a list of minimal induced subgraphs for the class
$\free(\{G\} \cup M'')=\free(\{G\} \cup M)$. As $\free(\{G\} \cup M)$ is a proper subclass of $\P(w,H)$, we know that it is below the Bell number by the second part of Theorem~\ref{factors}. Hence this class is well-quasi-ordered by Theorem~\ref{belowwqo} and finitely defined by Corollary~\ref{finitedef}.
Hence for $G \in \P(w, H)$ we found a finite set $M'' \subset M$ such that $\free(G \cup M'')$ is well-quasi-ordered, as required. 
\end{proof}

\begin{proof}[Proof of Theorem~\ref{thm:abovebell}]
First, if $\C$ is not finitely defined, then by Lemma~\ref{lemma:lwqo-fd}, $\C$ is not labelled well-quasi-ordered. So now we can assume that $\C=\free(G_1,\dots,G_s)$ is finitely defined.

By Theorem~\ref{factors}, $\C$ contains a class $\P(w,H)$ for some graph $H$ and almost periodic word $w$. By Theorem~\ref{thm:pwh-boundary}, $\P(w,H)$ is a boundary class, so by Theorem~\ref{thm:min-boundary}, $\P(w,H) = \free(\A)$ for a minimal antichain $\A$. Thus, if $\C = \free(G_1,\dots,G_s)$ is finitely defined, then each $G_i$ cannot lie in  $\P(w,H)$, and hence is either isomorphic to some graph in $\A$, or contains (at least one) graph  from $\A$. Either way, we conclude that all but finitely many of the graphs in $\A$ must also be contained in $\C$, and hence $\C$ is not well-quasi-ordered (and therefore also not labelled well-quasi-ordered).\end{proof}

Note that if $\C$ is not finitely defined then $\C$ immediately fails to be labelled well-quasi-ordered by Lemma~\ref{lemma:lwqo-fd}. However, such classes can still be well-quasi-ordered: for example, if $\C=\P(w,H)$ for some $H$ and almost periodic $w$.

%
%
%
%
%
%
%
%
%
%%%%%%%%%%%%%%%%%%%%%%%%%%%%%%%%%%%%%%%%%
\section{A bound on \texorpdfstring{$k_\C$}{k\_C} and \texorpdfstring{$\ell_\C$}{l\_C}}\label{sec-bound}

While Balogh et al~\cite{balogh:a-jump-to-the-bell:} provide an easy-to-check characterisation to determine whether a class $\C$ has finite or infinite distinguishing number, in this section we provide an upper bound on the value of $k_\C$ whenever $\C=\free(G_1, \ldots, G_s)$ has $k_\C<\infty$, as a function of the number of vertices in the largest graph in $\{G_1,\dots,G_s\}$. This also gives us a bound on the parameter $\ell_\C$, which counts how many sets can be distinguished in graphs in $\C$ which can have arbitrarily large cardinality. 

In order to establish this bound, we seek to derive a `finite' version of Theorem~\ref{thm:infinite}. First, we recall a number of concepts which are essentially in Balogh et al~\cite{balogh:a-jump-to-the-bell:}, although note that in~\cite{balogh:a-jump-to-the-bell:} these are defined in terms of hypergraphs, but our treatment here is equivalent.

Given a pair of disjoint sets of vertices $U$ and $V$ in a graph $G$, we say that $U$ and $V$ are \emph{joined} if $uv\in E(G)$ for all $u\in U$, $v\in V$,  and \emph{co-joined} if $uv\notin E(G)$ for all $u\in U$, $v\in V$.
Suppose $G$ is a graph with a partition $V(G)=X \cup V_1 \cup \cdots \cup V_r$ such that $X=\{v_1,\dots,v_r\}$, and $V_1,\dots, V_r$ is a collection of disjoint sets of vertices. 
Then we call $G$:

\begin{itemize}
\item an \emph{$r$-star} if each $v_i$ is joined to $V_i$ and co-joined to $V_j$ for $j \neq i$.
\item an \emph{$r$-costar} if each $v_i$ is co-joined to $V_i$ but joined to $V_j$ for $j \neq i$.
\item an \emph{$r$-skewchain} if each $v_i$ is joined to $V_j$ for $j \leq i$ and co-joined to $V_j$ for $j >i$.
\end{itemize}
Where necessary, we say that $v_i$ is the vertex of $X$ that \emph{corresponds} to the set $V_i$. In the case where $|V_i|\geq t$ for all $i$, we say that the $r$-star, $r$-costar or $r$-skewchain is \emph{$t$-strong}. In the case when $|V_i|=1$ for all $i$ and the graph 
$G$ is bipartite with one part equal to $X$, we call the graphs an \emph{$r$-matching}, an \emph{$r$-comatching} and an \emph{$r$-halfgraph}, respectively. 

\begin{proposition}[Balogh et al~\cite{balogh:a-jump-to-the-bell:}]\label{prop:f-well-defined}
For all $p,q,r \in \mathbb{N}$ there is a number $f(p,q,r)$ such that every bipartite graph in which one part
has $f(p,q,r)$ vertices with pairwise distinct neighbourhoods, contains a $p$-matching, a $q$-comatching, or an $r$-halfgraph as an induced subgraph.
\end{proposition}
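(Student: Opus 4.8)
The plan is to prove this by iterating Ramsey's theorem, so that the function $f$ will emerge as a composition of (multicolour, and $3$-uniform hypergraph) Ramsey numbers. Write the bipartite graph as having parts $A=\{a_1,\dots,a_N\}$ and $B$, where $N=f(p,q,r)$ and the $a_i$ have pairwise distinct neighbourhoods in $B$. For each pair $i<j$, I would colour $\{i,j\}$ according to whether $N(a_i)\subsetneq N(a_j)$, or $N(a_j)\subsetneq N(a_i)$, or the two neighbourhoods are incomparable; since the $a_i$ have distinct neighbourhoods these three options are exhaustive. Applying Ramsey's theorem, if $N$ is large enough there is a monochromatic subset $A_0\subseteq A$ of some prescribed size $h$, so that $\{N(a):a\in A_0\}$ is either a chain under inclusion or an antichain.

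I would handle the chain case first. After relabelling, assume $N(a_1)\supsetneq N(a_2)\supsetneq\dots\supsetneq N(a_h)$, and for $1\le s\le h-1$ choose $b_s\in N(a_s)\setminus N(a_{s+1})$. Using the chain condition, $b_s\in N(a_u)$ exactly when $u\le s$; in particular the $b_s$ are pairwise distinct, and the vertices $a_1,\dots,a_{h-1}$ together with $b_1,\dots,b_{h-1}$ induce an $(h-1)$-halfgraph (an $(h-1)$-skewchain with singleton bags $V_s=\{b_s\}$). Taking $h\ge r+1$ settles this case, and the other chain colour is symmetric.

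The substantive case is the antichain case: now every pair of neighbourhoods among $A_0=\{a_1,\dots,a_h\}$ is incomparable, and I must extract a $p$-matching or a $q$-comatching. The key observation is that an induced $k$-matching on $a_{i_1},\dots,a_{i_k}$ is precisely a \emph{private system} for the neighbourhoods --- each $N(a_{i_t})$ contains a vertex of $B$ lying in no other $N(a_{i_s})$ --- while an induced $k$-comatching is a private system for the complements $B\setminus N(a_{i_t})$, and a prefix-indicator system (elements $e_1,\dots,e_{k-1}$ with $a_{i_u}$ adjacent to $e_s$ if and only if $u\le s$) is exactly a $(k-1)$-halfgraph. To force one of these I would run a second Ramsey argument: for each $i<j$ fix witnesses $\beta^{+}_{ij}\in N(a_i)\setminus N(a_j)$ and $\beta^{-}_{ij}\in N(a_j)\setminus N(a_i)$, and colour each triple $i<j<k$ by the bounded amount of data recording, for each of the three pairs inside the triple, whether the two associated witnesses lie in the third neighbourhood. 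On a homogeneous subset $a_{i_1},\dots,a_{i_m}$, the membership of a consecutive witness $\beta^{\pm}_{i_s,\,i_{s+1}}$ in each $N(a_{i_u})$ is then completely determined by a fixed pattern, and a short case analysis over these patterns shows that the witnesses $\beta^{-}_{i_1 i_2},\dots,\beta^{-}_{i_{m-1} i_m}$ (or their $\beta^{+}$ counterparts) always form a private system, a co-private system, or a prefix-indicator system, yielding an $(m-1)$-matching, an $(m-1)$-comatching, or an $(m-2)$-halfgraph respectively.

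Putting this together, it suffices to take $f(p,q,r)$ to be the $3$-colour graph Ramsey number forcing a monochromatic set of size $h$, where $h$ is in turn large enough for the triple-colouring Ramsey step to return a homogeneous set of size $m\ge\max(p,q,r)+2$; this gives a finite (tower-type) bound. I expect the main obstacle to be exactly the antichain case: verifying that every homogeneous pattern in the second step really does produce one of the three required configurations. The first reduction is routine and the chain case is immediate; it is the bookkeeping of private systems inside an antichain that necessitates the extra Ramsey layer and drives the growth of $f$.
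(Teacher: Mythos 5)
The paper does not prove this proposition: it is quoted from Balogh, Bollob\'as and Weinreich, and the only trace of their argument here is the recurrence $f(p,q,r)=2(r-1)(r-2)f(p-1,q,r)f(p,q-1,r)+1$, which indicates an inductive proof on $p+q$ rather than a Ramsey extraction. Your route --- first Ramsey on pairs to get a chain or antichain of neighbourhoods, then a second Ramsey argument on triples in the antichain case --- is therefore genuinely different. It is a legitimate and arguably more transparent architecture (the chain case is exactly right, and homogenising the witness-membership patterns is the correct idea), at the cost of tower-type bounds where the original recurrence gives something much smaller; since the paper later tracks the size of $f$ explicitly in bounding $\ell(m)$, that loss is not cosmetic, but it does not affect the truth of the statement.

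There is, however, one concrete overclaim in your antichain case. On a homogeneous set, the consecutive witness $e_s=\beta^-_{i_s i_{s+1}}$ has its adjacency to $a_{i_u}$ governed by two bits: one for $u<s$ and one for $u>s+1$. Three of the four patterns give, as you say, a matching, a comatching, or a halfgraph. But the pattern ``adjacent for $u<s$, non-adjacent for $u>s+1$'' yields $e_s$ adjacent to $a_{i_u}$ precisely when $u<s$ or $u=s+1$, which is none of your three systems, and the $\beta^+$ witnesses can independently land in the analogous bad pattern, so ``or their $\beta^+$ counterparts'' does not rescue the claim as stated. The fix is easy but must be said: either pass to every other index (restricting $e_1,e_3,\dots$ to $a_{i_2},a_{i_4},\dots$ turns this pattern into a halfgraph), or replace the consecutive witnesses by the star witnesses $\beta^-_{i_1 i_t}$, whose homogeneous pattern in this case gives a matching or a halfgraph on $a_{i_2},\dots,a_{i_m}$. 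With that one extra step your case analysis closes and the proof is complete.
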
 

 We require two more results concerning unavoidable structures. First, for completeness we state Ramsey's theorem.

\begin{theorem}[Ramsey~\cite{ramsey}] There is a number $R(n)$ such that any graph with $R(n)$ vertices contains a clique or an independent set of size $n$.\end{theorem}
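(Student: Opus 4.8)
The plan is to prove the standard two-parameter refinement and then specialise. For positive integers $s,t$, call a number $m$ \emph{$(s,t)$-good} if every graph on $m$ vertices contains a clique of size $s$ or an independent set of size $t$, and let $R(s,t)$ be the least $(s,t)$-good number, provided one exists; the theorem then follows by taking $R(n)=R(n,n)$. First I would dispose of the base cases along the two ``walls'' $s=1$ and $t=1$: a single vertex is simultaneously a clique and an independent set of size $1$, so $R(1,t)=R(s,1)=1$. (If one prefers to start one level higher, $R(2,t)=t$ and $R(s,2)=s$, since a graph on $t$ vertices with no edge is itself an independent set, and dually for its complement.)

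The heart of the argument is the Erd\H{o}s--Szekeres recurrence: I claim that $R(s,t)$ exists for all $s,t\ge1$ and that, for $s,t\ge2$,
\[
R(s,t)\le R(s-1,t)+R(s,t-1).
\]
To see this I would induct on $s+t$, the walls above giving the base. Take any graph $G$ on $N:=R(s-1,t)+R(s,t-1)$ vertices, fix a vertex $v$, and partition $V(G)\setminus\{v\}$ into its neighbourhood $A=N(v)$ and non-neighbourhood $B=V(G)\setminus(N(v)\cup\{v\})$. Since $|A|+|B|=N-1=R(s-1,t)+R(s,t-1)-1$, the pigeonhole principle forces $|A|\ge R(s-1,t)$ or $|B|\ge R(s,t-1)$. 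In the first case $G[A]$ contains an independent set of size $t$ (done), or a clique of size $s-1$, which together with $v$ — adjacent to all of $A$ — gives a clique of size $s$; the second case is symmetric, using that $v$ is non-adjacent to all of $B$. Unwinding the recurrence from the base cases via Pascal's rule yields $R(s,t)\le\binom{s+t-2}{s-1}$, and in particular $R(n)=R(n,n)\le\binom{2n-2}{n-1}\le 4^{n}$, which is finite and all that is needed.

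There is essentially no serious obstacle here — this is the classical argument — so the only point requiring care is keeping the double induction well-founded (induct on $s+t$, handle $s=1$ and $t=1$ directly) and checking that the pigeonhole split is exact, i.e.\ that $N-1$ is precisely one less than the sum of the two thresholds so at least one part must meet its bound. An alternative route, avoiding the recurrence, is the iterative ``greedy'' proof: starting from a graph on $4^{n}$ vertices, repeatedly pick a vertex and pass to whichever of its neighbourhood or non-neighbourhood is larger, thereby extracting a sequence of vertices in which each is either adjacent to, or non-adjacent to, all later ones; after enough steps one of these two ``monochromatic'' types must recur $n$ times, producing the desired clique or independent set. Either route establishes the stated bound on $R(n)$.
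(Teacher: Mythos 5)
Your proof is correct: this is the classical Erd\H{o}s--Szekeres argument (the recurrence $R(s,t)\le R(s-1,t)+R(s,t-1)$ via the neighbourhood/non-neighbourhood split at a single vertex, with the binomial bound following from Pascal's rule), and the paper itself offers no proof, simply citing Ramsey's theorem as a known result. Note that the bound you obtain, $R(n)\le\binom{2n-2}{n-1}\le 4^n=2^{2n}$, is exactly the ``naive bound'' $R(m)\le 2^{2m}$ that the paper invokes later in its quantitative analysis of $k(m)$ and $\ell(m)$, so your argument supplies precisely what the paper needs.
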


The second result we require concerns finding uniformity in graphs which can be partitioned into a large number of large parts. A suitable result is proved in~\cite{balogh:a-jump-to-the-bell:} (see Corollary~8), but the result we give here, which can be found in Brian Cook's MSc Thesis~\cite[Lemma 4.3.4]{cook:an-extension-of:}, provides us with an improved bound on the size of graph needing to be considered. 

\begin{lemma}[Cook~\cite{cook:an-extension-of:}]\label{lem:uniform-bags}
There is a number $n_k(t)$ such that if $G$ has a partition into $k$ sets $V_1,\dots,V_k$, each containing at least $n_k(t)$ vertices, then $G$ contains an induced subgraph $H$ with partition $W_1,\dots,W_k$ such that
\begin{enumerate}
\item[(i)] $|W_i| \geq t$ for all $i$,
\item[(ii)] $W_i\subseteq V_i$ for all $i=1,\dots,k$,
\item[(iii)] For each $i\neq j$, $W_i$ and $W_j$ are either joined or co-joined.
\end{enumerate}
\end{lemma}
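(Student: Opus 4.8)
\textbf{Proof plan for Lemma~\ref{lem:uniform-bags}.}

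The plan is to prove this by a straightforward iterative application of the bipartite Ramsey (or ``Zarankine-type'') idea, processing the $\binom{k}{2}$ pairs of bags one at a time and shrinking all bags at each step. First I would record the key single-pair fact: there is a function $g(s)$ (a bipartite Ramsey number) such that whenever $A$ and $B$ are disjoint vertex sets with $|A|,|B|\geq g(s)$, one can find $A'\subseteq A$, $B'\subseteq B$ with $|A'|,|B'|\geq s$ and such that $A'$ and $B'$ are either joined or co-joined. This is the standard bipartite Ramsey lemma; it follows, for instance, by fixing $a\in A$, using it to split $B$ into its neighbourhood and non-neighbourhood inside $B$ (one of which has size $\geq |B|/2$), iterating, and then cleaning up — or one may simply cite it. The exact form of $g$ is not important for the statement; what matters is that such a function exists.

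Next I would set up the iteration. Enumerate the unordered pairs $\{i,j\}$ with $i<j$ in $[k]$ as $e_1,\dots,e_m$ where $m=\binom{k}{2}$. We construct a decreasing chain of ``bag systems'': starting from $(V_1,\dots,V_k)$, at step $r$ we have sets $(V_1^{(r)},\dots,V_k^{(r)})$ in which all pairs $e_1,\dots,e_r$ have already been made joined or co-joined (and this property is preserved under passing to subsets). To handle pair $e_{r+1}=\{i,j\}$, apply the single-pair fact to $A=V_i^{(r)}$ and $B=V_j^{(r)}$ to obtain subsets of size at least $s_{r+1}$ (some threshold to be fixed) that are joined or co-joined; replace $V_i^{(r)}$ and $V_j^{(r)}$ by these subsets and leave all other bags unchanged, yielding $(V_1^{(r+1)},\dots,V_k^{(r+1)})$. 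Crucially, shrinking bags $i$ and $j$ does not disturb the joined/co-joined status of any previously processed pair, since that property is monotone under taking subsets. After $m$ steps every pair has been resolved, and we take $W_i = V_i^{(m)}$ and $H$ the induced subgraph on $W_1\cup\dots\cup W_k$; conditions (ii) and (iii) are then immediate, and (i) holds provided we chose the thresholds so that every bag ends up with at least $t$ vertices.

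Finally I would do the bookkeeping to define $n_k(t)$ and verify (i). Working backwards: we want all final bags to have size $\geq t$, so set $s_m = t$; each application of the single-pair fact requires its two input bags to have size $\geq g(s_{r+1})$, and it shrinks (at most) two bags; a bag can be shrunk at most $k-1$ times over the whole process (once for each pair containing its index), so it suffices to ensure that the original bags are large enough to survive $k-1$ rounds of the worst-case shrinkage. Concretely, define a sequence by $s_m = t$ and $s_r = g(s_{r+1})$ for $r = m-1,\dots,0$, and set $n_k(t) = s_0$. Then an easy induction on $r$ shows that at step $r$ every bag has size at least $s_{\rho(r)}$ where $\rho(r)$ is an appropriate index bounding how many times that bag has been touched; since $s$ is non-increasing in $r$ and $s_m=t$, all bags remain of size $\geq t$ throughout, giving (i).

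The steps are all routine; the only mild subtlety — and the one place to be careful — is the accounting in the last paragraph: one must make sure that the thresholds $s_r$ are defined in the correct (backward) order so that each bag, no matter which of its incident pairs are processed when, always enters each application with enough vertices, and ends with at least $t$. There is no real obstacle, just a need to be precise about the recursion defining $n_k(t)$; a cleaner (if wasteful) alternative that avoids the indexing headache is to use the uniform bound $n_k(t) = g^{(m)}(t)$, the $m$-fold iterate of $g$ applied to $t$, and note that every bag is shrunk at most $m$ times with each shrink replacing size $\geq g(s)$ by size $\geq s$.
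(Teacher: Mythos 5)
Your argument is correct, but it takes a genuinely different route from the paper's. You resolve the $m=\binom{k}{2}$ pairs of bags one at a time via an iterated bipartite Ramsey step, using the fact that the joined/co-joined property is preserved under passing to subsets; the backward recursion $s_m=t$, $s_r=g(s_{r+1})$ (equivalently the uniform bound $n_k(t)=g^{(m)}(t)$) does make the bookkeeping go through, so the existence claim is established. The paper instead proceeds by induction on the number of bags $k$: it first uniformises $V_1,\dots,V_k$ with bags of size $2t$, then uses the pigeonhole principle to find $t$ vertices of $V_{k+1}$ having identical neighbourhoods in the $2kt$ already-uniformised vertices, and finally halves each old bag by majority adjacency type against the new one. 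That yields the explicit recursion $n_{k+1}(t)=\max\{t2^{2kt},n_k(2t)\}$ and hence $n_k(t)=2^{k-2}t2^{2^kt}$, roughly doubly exponential in $k$. Your construction, with $g(s)$ itself exponential in $s$, gives a tower of height $\binom{k}{2}$, which is substantially weaker quantitatively. This matters here: the paper cites Cook's proof precisely because the improved bound on $n_k(t)$ feeds into the explicit estimates for $k(m)$ and $\ell(m)$ and the runtime analysis culminating in Theorem~\ref{periodic}; with your bound those estimates would degrade to tower-type functions, although the lemma as stated (mere existence of $n_k(t)$) is still proved.
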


\begin{proof}
We prove this by induction on $k$. Clearly, we can take $n_1(t) = t$.

Now consider a graph $G$ with a partition into $k+1$ sets $V_1,\dots, V_{k+1}$. First, if $V_1,\dots,V_k$ each contain at least $n_k(2t)$ vertices, then by induction we can find sets $W'_1,\dots,W'_k$ each of size $2t$ satisfying the statement of the lemma for the subgraph of $G$ induced on $V_1,\dots,V_k$.

Next, providing the set $V_{k+1}$ contains at least $t2^{2kt}$ vertices, then by the pigeonhole principle there is a set $W_{k+1}\subseteq V_{k+1}$ containing at least $t$ vertices which have the same neighbourhood with the $2kt$ vertices in $W'_1,\dots,W'_k$. 

Furthermore, since each set $W'_i$ contains $2t$ vertices, we can find a set $W_i\subseteq W'_i$ containing at least $t$ vertices which are either all adjacent or all nonadjacent to $W_{k+1}$. Therefore, $W_1,\dots,W_{k+1}$ induces the necessary subgraph $H$ of $G$, and we may take $n_{k+1}(t) = \max\{t2^{2kt},n_k(2t)\}$.
\end{proof}

For later reference, the final condition $n_{k+1}(t) = \max\{t2^{2kt},n_k(2t)\}$ together with $n_1(t)=t$ gives us that $n_2(t) = t2^{4t}$ and thence for $k\geq 3$ we have $n_k(t) = 2^{k-2}t2^{2^kt}$.

We are now nearly ready for our `finite' version of Theorem~\ref{thm:infinite}. Roughly speaking, given an arbitrary graph in which we can find a set $X$ that distinguishes a large number of large sets $V_1,V_2,\dots,V_{k}$, we seek an induced subgraph that is an $r$-star, $r$-costar or $r$-skewchain, and in which the relationship between and within the various sets of vertices is `uniform'. 

For an $r$-star, $r$-costar or $r$-skewchain with vertex partition $X\cup V_1\cup\cdots \cup V_r$ where $|X|=|V_i|=r$ for all $i$, we will take the set $X$ to be either an independent set or a clique, all $V_i$s will form independent sets or all $V_i$s form cliques, and all pairs $V_i$ and $V_j$ ($i\neq j$) will be joined or all pairs will be co-joined. This gives $3\times 2\times 2 \times 2=24$ minimal graphs, which we will call $U^r_1,U^r_2,\dots,U^r_{24}$, in some (arbitrary) order. Note that each $U^r_i$ is an $r$-star, an $r$-costar or an $r$-skewchain with $r^2+r$ vertices.

In the theorem below, we use $R_4(k)$ to denote the 4-coloured Ramsey number, sometimes denoted $R(k,k,k,k)$.

\begin{theorem} \label{thm:infinitetofinite}
If a graph $G$ contains a set $X$ which distinguishes $\ell(m)=f(M,M,M)$ sets where $M=R_4(2m-1)$, each of size $k(m)=n_M(R(m))$,
then the graph $G$ contains $U^m_i$ for some $1 \leq i \leq 24$. 
\end{theorem}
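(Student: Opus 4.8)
The plan is to start from the set $X=\{v_1,\dots,v_N\}$ distinguishing the sets $V_1,\dots,V_\ell$ (where $\ell=f(M,M,M)$ and $|V_i|\geq k=n_M(R(m))$), and peel off structure in three stages: first pin down the ``profile'' of each $V_i$ across $X$ to extract a matching/comatching/halfgraph pattern (via Proposition~\ref{prop:f-well-defined}), then promote that to an $r$-star/costar/skewchain on genuinely large bags (via Lemma~\ref{lem:uniform-bags}), and finally apply Ramsey's theorem inside $X$ and inside each bag to make everything ``uniform'', landing in one of the $24$ graphs $U^m_i$.

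First I would set up the auxiliary bipartite graph $B$ between $X$ and an index set of size $\ell$: create one vertex for each $V_i$, joined in $B$ to exactly those $v_j\in X$ whose neighbourhood in $X$ all vertices of $V_i$ share (this is well-defined since $X$ distinguishes the $V_i$). Because $X$ distinguishes the sets, distinct $V_i$ give distinct neighbourhoods in $B$, so one part of $B$ has $\ell=f(M,M,M)$ vertices with pairwise distinct neighbourhoods. Proposition~\ref{prop:f-well-defined} then yields an $M$-matching, an $M$-comatching, or an $M$-halfgraph inside $B$ — i.e.\ a sub-collection $V_{i_1},\dots,V_{i_M}$ together with corresponding distinguished vertices $x_1,\dots,x_M\in X$ whose adjacency to the sets follows the star, costar, or skewchain pattern respectively. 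Keeping only these $M$ sets and these $M$ vertices of $X$, we now have an induced subgraph of $G$ that is (after possibly adding back the fact that $x_s$ sees all of $V_{i_s}$ the same way) an $M$-star, $M$-costar, or $M$-skewchain — except that the bags are still only required to have size $\geq k$, and the internal edges of the bags, the edges within $\{x_1,\dots,x_M\}$, and the edges between different bags $V_{i_s},V_{i_t}$ are still arbitrary.

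Next I would apply Lemma~\ref{lem:uniform-bags} to the $M$ bags $V_{i_1},\dots,V_{i_M}$ (each of size $\geq k=n_M(R(m))$, exactly the hypothesis of that lemma with parameter $t=R(m)$), obtaining subbags $W_1,\dots,W_M$ with $|W_s|\geq R(m)$ and with every pair $(W_s,W_t)$ joined or co-joined. Then Ramsey's theorem applied to $\{x_1,\dots,x_M\}$ (using $M=R_4(2m-1)\geq R(m)$) extracts a sub-collection of the $x_s$'s, of size still at least $2m-1$ actually we want more care here, that is either a clique or an independent set; correspondingly restrict to the matching/comatching/halfgraph indices that survive. Finally, Ramsey inside each surviving $W_s$ (again $|W_s|\geq R(m)$) gives a size-$m$ clique or independent set in each bag. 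To make the choice ``all bags cliques or all bags independent'' uniform, and to handle the skewchain case where we must also preserve the \emph{order} of the $x_s$'s, this is where the $4$-colour Ramsey number $M=R_4(2m-1)$ does the work: colour each pair $\{x_s,x_t\}$ (with $s<t$ in the skewchain order) by the pair (adjacency of $x_s x_t$ in $G$, adjacency type of $W_s$ vs $W_t$), a $4$-colouring, and take a monochromatic clique of size $2m-1$; this simultaneously fixes the $X$-internal adjacency, fixes whether cross-bag pairs are joined or co-joined, and — crucially in the skewchain case — within a monochromatic set the induced substructure on the $x$'s is still a $(2m-1)$-skewchain, from which an $m$-skewchain together with $m$ large bags can be carved out. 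After all this, choosing an $m$-element clique/independent set in each of $m$ bags and the $m$ corresponding $x$'s yields precisely some $U^m_i$ with $|X|=|V_i|=m$.

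The main obstacle, and the step needing the most care, is the bookkeeping in the skewchain case: unlike stars and costars, a skewchain is not symmetric under permuting the index set, so when we pass to sub-collections via Ramsey we must ensure the linear order on the corresponding $x_s$'s is respected, and that the cross-bag joined/co-joined decision is compatible with (or at least can be made uniform alongside) the skewchain pattern. This is exactly why a $4$-colouring of pairs — rather than a plain $2$-colour Ramsey argument iterated — is used: it lets us fix the $X$-internal edges and the cross-bag relation in one monochromatic step while automatically preserving order. Verifying that $f(M,M,M)$ sets and bags of size $n_M(R(m))$ are genuinely enough (i.e.\ that the constants propagate correctly through the three reductions) is routine but is the part where an off-by-a-factor error would hide, so I would track the parameters explicitly: $\ell(m)=f(M,M,M)$ feeds Proposition~\ref{prop:f-well-defined} to produce $M$ structured sets, $k(m)=n_M(R(m))$ feeds Lemma~\ref{lem:uniform-bags} to fatten them to size $R(m)$, and $M=R_4(2m-1)$ feeds the two Ramsey applications (one $4$-coloured on pairs of $X$, one $2$-coloured inside each bag) to reach the fully uniform $U^m_i$ on $m$-element parts.
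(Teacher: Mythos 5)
Your proposal is correct and follows essentially the same route as the paper: the auxiliary bipartite graph feeding Proposition~\ref{prop:f-well-defined}, then Lemma~\ref{lem:uniform-bags} to make the cross-bag relations uniform, then the $4$-coloured Ramsey argument on the pairs $(x_i,W_i)$ to fix the $X$-internal and cross-bag adjacencies while preserving the skewchain order, and finally Ramsey inside each bag. The only slight imprecision is attributing the ``all $m$ chosen bags are cliques or all are independent sets'' step to the $4$-colouring — in fact that is a pigeonhole on the $2m-1$ surviving bags (at least $m$ of them reduce to cliques or at least $m$ to independent sets), which is exactly why the monochromatic set must have size $2m-1$ rather than $m$.
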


\begin{proof}
Let the collection of sets distinguished by $X$ be $\FF=\{V_i: 1 \leq i \leq \ell(m)\}$. Construct an auxiliary bipartite graph $H$ in 
which one part is the distinguishing set $X$ (with edges removed), and the other is $V=\{v_1,\dots,v_{\ell(m)}\}$ 
where $v_i$ has the same neighbourhood in $X$ as $V_i$ has with $X$ in $G$. 
Notice that, by definition of the distinguishing set $X$, it follows that no two vertices of $V$ have the same neighbourhood in $X$. 
Hence, part $V$ has $f(M,M,M)$ vertices with distinct neighbourhoods, which by Proposition~\ref{prop:f-well-defined} implies
that we can find a subset $X_1\subseteq X$ with $|X_1|=M$ and a set $\{v_{i_1},\dots,v_{i_{M}}\}$ such that $X_1\cup \{v_{i_1},\dots,v_{i_M}\}$ induces an $M$-matching, -comatching or -halfgraph inside $H$. In $G$, the induced subgraph on the vertices $X_1$ together with $\FF_1 = \{ V_{i_1},\dots,V_{i_M}\}\subseteq \FF$ is therefore a $k(m)$-strong $M$-star, -costar or -skewchain.

Next, as each of the $M$ sets $V_{i_j}\in\FF_1$ contains at least $k(m)=n_M(R(m))$ vertices, by Lemma~\ref{lem:uniform-bags} we can find sets $W_j\subseteq V_{i_j}$ with $|W_j|\geq R(m)$ so that between each pair $W_j$, $W_{j'}$ there are either all edges or none. Letting $\FF_2 = \{W_1,\dots,W_M\}$ and $X_2=X_1$, we have an $R(m)$-strong $M$-star, -costar or -skewchain where every pair of sets in $\FF_2$ is joined or co-joined.

Write $X_2=\{x_1,\dots,x_M\}$, with the indices arranged so that $x_i\in X_2$ corresponds to $W_i\in \FF_2$ for each $i$. Now consider the $M=R_4(2m-1)$ vertex-set pairs $(x_i,W_i)$. Between each $(x_i,W_i)$ and $(x_j,W_j)$ we identify an edge relation $e_{ij}$ with one of four colours, depending on whether $x_ix_j\in E(G)$ or not, and whether $W_i$ is joined or co-joined to $W_j$.

As there are $R_4(2m-1)$ pairs $(x_i,W_i)$, we can find $2m-1$ pairs so that all edges have the same colour. Let $\FF_3$ denote the collection of $2m-1$ sets $W_i$, and $X_3$ the corresponding $2m-1$ vertices $x_i$. Thus $X_3$ and $\FF_3$ induce in $G$ an $R(m)$-strong $(2m-1)$-star, -costar or -skewchain, in which $X_3$ is a clique or independent set, and either all edges are present between all pairs in $\FF_3$, or no edges are present between any pair.

Finally, as each set $U\in \FF_3$ contains $R(m)$ vertices, we can find a collection of $m$ vertices in $U$ which induce a clique or an independent set in $G$. Since $\FF_3$ contains $2m-1$ sets, at least $m$ of these sets in $\FF_3$ will induce cliques, or at least $m$ will induce independent sets. Thus by first reducing the sets in $\FF_3$ to subsets of size at least $m$, and then choosing at least $m$ of these sets, we can find a collection $\FF_4$ of $m$ sets each of size $m$, and corresponding vertices $X_4$ which creates an induced $U_i^m$ in $G$, for some $1\leq i\leq 24$.
\end{proof}

The astute reader will have noticed that although we have defined 24 `unavoidable' graphs, there are still only 13 minimal classes with infinite distinguishing number. The discrepancy here arises because of the unavoidable graphs $U_1^m,\dots,U_{24}^m$, one can find a disjoint union of $m$ cliques each of size at least $m$ in six of them, the Tur\'an graph $T(m^2,m)$ in another six, and the $m$-strong $m$-skewchain where one part is a clique and the other an independent set appears twice in the list. From this observation, we can readily deduce the following.

\begin{lemma}\label{lem:universal}Each graph $U_i^m$ is $m$-universal for some $\X_j$. That is, $U_i^m$ contains every graph on $m$ vertices from $\X_j$.\end{lemma}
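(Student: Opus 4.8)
The plan is to establish a correspondence between the 24 unavoidable graphs $U_i^m$ and the 13 minimal classes $\X_1,\dots,\X_{13}$, by examining each of the three ``shapes'' ($m$-star, $m$-costar, $m$-skewchain) under the choices of clique/independent-set for $X$, for the bags $V_i$, and for the between-bag relation. Recall that each $U_i^m$ has vertex set $X \cup V_1 \cup \dots \cup V_m$ with $|X| = |V_i| = m$, where $X$ is a clique or an independent set, all $V_i$ are cliques or all are independent sets, and all pairs $(V_i,V_j)$ with $i \neq j$ are joined or all are co-joined. First I would fix such a graph $U_i^m$, and restrict attention to the induced subgraph on $\{v_i\} \cup V_i$ for a single $i$ together with the bags: the key structural observation is that within $U_i^m$, for each bag $V_i$ the vertex $v_i$ is either complete or anticomplete to $V_i$, and complete or anticomplete to each other $V_j$, so the neighbourhood structure is entirely governed by a small number of binary choices.

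The main work is a case check. I would organise it as follows. Taking complements if necessary halves the work: complementation swaps $m$-star with $m$-costar, swaps clique/independent-set for $X$ and for the bags, and swaps joined/co-joined, and it interchanges $\X_j$ with $\overline{\X_j}$; so it suffices to handle, say, the $m$-star and the $m$-skewchain cases and then invoke the complementation symmetry built into the list $\X_8 = \overline{\X_1}, \dots$. For an $m$-star with all bags independent and all pairs co-joined and $X$ independent: each component is a star $K_{1,m}$, so one sees a disjoint union of $m$ stars — but more to the point, the graph has the structure ``every vertex $v \in V_i \cup \{v_i\}$ has at most one neighbour ($v_i$, resp.\ all of $V_i$)''... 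I would instead match it directly against the defining descriptions of $\X_2,\dots,\X_5$ (the ``at most one neighbour in $V_1$'' classes) and $\X_6,\X_7$ (the ``neighbourhoods linearly ordered by inclusion'' classes). Concretely, in an $m$-star, setting $V_1 := X$ and $V_2 := V_1 \cup \dots \cup V_m$, every vertex of $V_2$ has exactly one neighbour in $V_1$ (namely its corresponding $v_i$), which is the defining shape of $\X_2,\X_3,\X_4$ or $\X_5$ according to whether $X$ and the bags are cliques or independent sets; the between-bags relation being ``co-joined'' is forced by the requirement that a vertex of $V_2$ has \emph{at most one} neighbour in $V_1$. For an $m$-skewchain, the corresponding vertices $v_i$ have neighbourhoods $V_1 \cup \dots \cup V_i$ among the bags, which are linearly ordered by inclusion — precisely the shape of $\X_6$ or $\X_7$ — and the remaining skewchains with the bags joined give, after identifying $X$ with a clique sitting among the $V_j$'s or via complementation, $\X_1$ (disjoint union of cliques) or one of its relatives.

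Having built this dictionary, for each $U_i^m$ I would read off the class $\X_j$ it belongs to and then verify the $m$-universality claim: that $U_i^m$ contains every $m$-vertex graph from $\X_j$. This direction is the converse packaging of the same structural description. Given any $G \in \X_j$ on at most $m$ vertices, its defining partition $V(G) = A \cup B$ has $|A|, |B| \leq m$; I would embed $A$ into $X$ (which is an independent set or clique, matching the type of $A$), and embed $B$ into the union of the bags, placing the vertices of $B$ into distinct bags $V_i$ or into a single bag according to which sub-shape is in play, using $|V_i| = m \geq |B|$ and the fact that the within-bag and between-bag relations in $U_i^m$ were chosen uniformly to be exactly the relations permitted in the definition of $\X_j$. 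The edges between $A$ and $B$ in $G$ — ``at most one neighbour'' or ``linearly ordered'' — are then realised by choosing which $v_i \in X$ to map each vertex of $A$ to, matching the corresponding bag containing the right vertex of $B$.

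The main obstacle I anticipate is purely bookkeeping: there are $3 \times 2 \times 2 \times 2 = 24$ cases, and one must be careful that the claimed $\X_j$ is correct in each — in particular the stated facts that six of the $U_i^m$ contain a disjoint union of $m$ cliques of size $m$ (these are the ones landing in $\X_1$ or $\X_8$-type descriptions, after recognising that e.g.\ an $m$-costar with all bags cliques, all pairs joined and $X$ a clique is itself a disjoint union of cliques up to complementation), six contain the Tur\'an graph $T(m^2,m)$ (the complementary family), and that the ``mixed'' $m$-skewchain with $X$ a clique and bags independent (and its natural co-variant) both reduce to $\X_7$, which is self-complementary — this is exactly why $24$ collapses to $13$. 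I would present the verification as a table rather than twenty-four prose paragraphs, relying on the complementation involution to cut the genuine case analysis roughly in half and on the self-complementarity $\X_7 = \overline{\X_7}$ to account for the final coincidence.
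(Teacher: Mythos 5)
Your overall strategy --- an exhaustive matching of the 24 graphs to the 13 classes, cut down by the complementation involution, followed by an explicit embedding of an arbitrary $m$-vertex member of the matched class --- is exactly what the paper leaves implicit (the paper offers only the $6+6+2$ remark and the words ``readily deduce''), so the plan is the right one. However, the dictionary you sketch contains a concrete error that would derail the bookkeeping. You assert that for the $m$-star cases ``the between-bags relation being co-joined is forced'' and that the star matches $\X_2,\X_3,\X_4$ or $\X_5$ according to the clique/independent type of $X$ and of the bags. This is wrong for $\X_3$ and $\X_5$: in those classes the part $V_2$ is a single clique, so the star can only be universal for them when the bags are cliques \emph{and pairwise joined}, making $V_1\cup\dots\cup V_m$ one clique of size $m^2$. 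The star with $X$ independent, bags cliques and pairs co-joined is a disjoint union of $(m{+}1)$-cliques; it does not contain $P_4$, which lies in $\X_3$ (take $V_2$ an edge $c_1c_2$ and $V_1=\{a_1,a_2\}$ with $a_i\sim c_i$), so it is not $m$-universal for $\X_3$ --- it is one of the six graphs that are $\X_1$-universal. Following your dictionary you would both attempt an impossible verification there and leave the genuine $\X_3$- and $\X_5$-universal graphs (stars with joined clique bags) unaccounted for; via your complementation step this error propagates to $\X_{10}$ and $\X_{12}$, which are realised by the costars with independent, co-joined bags (each vertex of the independent union of bags has exactly one \emph{non}-neighbour in $X$), not by anything matching the ``at most one neighbour'' template directly.

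Two smaller slips of the same kind: your example of a graph that ``is itself a disjoint union of cliques up to complementation'' (the costar with clique bags, all pairs joined, $X$ a clique) is incorrect --- that graph has independence number $2$ for $m\geq 3$ and its complement is a disjoint union of stars; the six graphs containing $m$ disjoint $m$-cliques are precisely those whose bags are cliques and pairwise co-joined. Likewise the skewchains ``with the bags joined'' do not reduce to $\X_1$: with independent bags they contain $T(m^2,m)$ (so $\X_8$), and with clique bags they give $\X_7$ and $\X_{13}$. The correct table is: bags cliques and co-joined $\to\X_1$ (six graphs); bags independent and joined $\to\X_8$ (six graphs); among the remaining twelve, the stars give $\X_2,\X_4$ (independent co-joined bags) and $\X_3,\X_5$ (joined clique bags), the costars give $\X_{12},\X_{10}$ and $\X_9,\X_{11}$ respectively, and the four skewchains give $\X_6$, $\X_{13}$ and twice $\X_7$. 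With that corrected dictionary your embedding argument (map $A$ into $X$, route each $b\in B$ to the bag indexed by its unique neighbour or non-neighbour, park the unconstrained vertices of $B$ in a spare bag) goes through.
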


We can now obtain an explicit bound for $k_\C$ and $\ell_\C$ for a finitely defined class $\C$ in which we know $k_\C<\infty$. 

\begin{theorem}
Let $\C=\free(G_1,G_2, \ldots, G_s)$ be a class with finite distinguishing number and let $m=\max \{|G_i|: 1 \leq i \leq s\}$ be the order of the largest forbidden graph. 
Then $k_\C \leq k(m)$ and $\ell_\C <\ell(m)$ where $k(m)$ and $\ell(m)$ are as given in Theorem~\ref{thm:infinitetofinite}. 
\end{theorem}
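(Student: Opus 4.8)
The plan is to establish the contrapositive of the defining property of $k_\C$ (part (b)): I want to show that if some graph $G \in \C$ admits a set $X$ distinguishing $\ell(m)$ sets each of size at least $k(m)$, then $\C$ contains one of the minimal classes $\X_j$, which by Theorem~\ref{thm:infinite} (or rather its proof via Theorem~\ref{thm:abovebell}) forces $\C$ to be above the Bell numbers, and in any case makes $k_\C = \infty$, contradicting the hypothesis. Concretely: suppose for contradiction that $k_\C > k(m)$. Then by the definition of $k_\C$ there is no pair $(k,\ell)$ with $k \le k(m)$ that bounds distinguishing behaviour in $\C$; in particular, taking $k = k(m)$, for every $\ell$ — and so in particular for $\ell = \ell(m)$ — there is a graph $G \in \C$ with a set $X$ distinguishing $\ell(m)$ sets each of size at least $k(m)$. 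This is exactly the hypothesis of Theorem~\ref{thm:infinitetofinite}.

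Applying Theorem~\ref{thm:infinitetofinite} to this $G$, we obtain that $G$ contains $U^m_i$ as an induced subgraph for some $1 \le i \le 24$. By Lemma~\ref{lem:universal}, $U^m_i$ is $m$-universal for some class $\X_j$, meaning it contains every $m$-vertex graph of $\X_j$ as an induced subgraph. Since $\C$ is hereditary and $G \in \C$, every induced subgraph of $U^m_i$ lies in $\C$; hence $\C$ contains all $m$-vertex graphs of $\X_j$. But each forbidden graph $G_t$ has at most $m$ vertices, so if some $G_t$ were an induced subgraph of an $m$-vertex graph $F \in \X_j$, then $G_t \in \C$ would be contained in $F \in \C$, contradicting that $G_t$ is forbidden. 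More carefully: $\X_j$ is itself a hereditary class avoiding all of $G_1,\dots,G_s$? Not necessarily — but what we actually need is a contradiction with $k_\C < \infty$, and this follows differently: since $\C$ contains arbitrarily large induced subgraphs of $\X_j$-graphs (the argument above works for every $m' \ge m$ if $k_\C$ stays $> k(m')$), or more simply, $\C$ contains a copy of $U^m_i$ for the specific $m$ equal to the largest forbidden graph's order, so no $G_t$ embeds in $U^m_i$, so in particular no $G_t$ lies in $\X_j$-restricted-to-$m$-vertices, and one checks the classes $\X_j$ are sufficiently rich that $k_{\X_j} = \infty$; since $\C \supseteq \{$all $m$-vertex graphs of $\X_j\}$ for arbitrarily large $m$ (re-running with $m$ replaced by any $m' \ge m$), we get $k_\C = \infty$, a contradiction.

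The bound on $\ell_\C$ is then immediate: by definition $\ell_\C$ is the largest $\ell$ such that for every $k$ there is $G \in \C$ and $X \subseteq V(G)$ distinguishing $\ell$ sets each of size at least $k$. If $\ell_\C \ge \ell(m)$, then in particular taking $k = k(m)$ we find such a $G$ distinguishing $\ell(m)$ sets of size $\ge k(m)$, and Theorem~\ref{thm:infinitetofinite} again produces $U^m_i \subseteq G$, leading to the same contradiction with $k_\C < \infty$ as above. Hence $\ell_\C < \ell(m)$.

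The main obstacle — and the point that needs the most care — is the precise logical extraction from the definition of $k_\C$: the definition says $k_\C$ is the \emph{minimum} $k$ over all valid pairs $(k,\ell)$, so to conclude that a graph witnessing distinguishing behaviour with parameters $(k(m), \ell(m))$ exists in $\C$, I must argue that $(k(m), \ell)$ fails to be a valid pair for \emph{every} $\ell$ (equivalently that $k = k(m)$ is too small), which is where the hypothesis $k_\C \le \infty$ together with $k_\C > k(m)$ gets used. I should also double-check that applying Theorem~\ref{thm:infinitetofinite} with $m$ equal to the largest forbidden order (rather than something larger) genuinely yields the contradiction — the cleanest route is to observe that $U^m_i$ being $m$-universal for $\X_j$ means all $m$-vertex members of $\X_j$ lie in $\C$, none of which can contain any $G_t$ (all of order $\le m$) as a proper-or-equal induced subgraph, forcing $G_t \notin \X_j$ for all $t$, and then noting $\C \cap \X_j$ still contains all $m$-vertex $\X_j$-graphs — and since this holds for this single fixed $m$, it already contradicts $k_\C \le k(m)$ being violated only in the direction we assumed; the assumption $k_\C > k(m)$ is exactly what is refuted, giving $k_\C \le k(m)$.
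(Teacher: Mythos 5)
Your proposal is correct and follows essentially the same route as the paper: suppose some $G\in\C$ admits a set distinguishing $\ell(m)$ sets of size $k(m)$, invoke Theorem~\ref{thm:infinitetofinite} to find some $U^m_i$ in $\C$, use Lemma~\ref{lem:universal} together with the fact that every forbidden graph has at most $m$ vertices to conclude $\X_j\subseteq\C$ and hence $k_\C=\infty$, a contradiction; the bounds on $k_\C$ and $\ell_\C$ then follow from their definitions exactly as you say. The only caveat is that your mid-proof digression about ``re-running with $m'\geq m$'' is both unnecessary and not justified as stated (you would need the distinguishing hypothesis at the larger parameters $k(m'),\ell(m')$), but the ``cleanest route'' you settle on at the end is the paper's argument and is sound.
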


\begin{proof}
For the sake of contradiction, suppose that $\C$ contains a graph $G$ with a set $X$ distinguishing $\ell(m)$ sets each of size at least $k(m)$. 
Then, by Theorem~\ref{thm:infinitetofinite}, $G$ contains one of the graphs $U^m_i$ for some $i$. In particular, this means that $U^m_i\in\C$, and therefore by Lemma~\ref{lem:universal} every graph on $m$ vertices from some $\X_j$ is contained in $\C$. However, as $m$ is the size of the largest forbidden induced subgraph of $\C$, it follows that $\X_j\subseteq \C$, but this is impossible as it would imply $k_\C = \infty$.

From this, we see that for any graph $G\in\C$, one can distinguish at most $\ell(m)-1$ sets of size $k(m)$, and thus by definition $k_\C \leq k(m)$. Similarly, by definition of $\ell_\C$ we see that $\ell_\C < \ell(m)$.
\end{proof}

%From this, as in the proof of Balogh et all it follows, that the distinguishing number if finite, i.e. less than $k(m)$ and each graph, after removal a finite set of vertices can be partitioned into $\ell_{\C}$ bags such that degree or co-degree between the bags is bounded by some constant $d_{\C}$.
%It is not hard to see that it follows from the definition that $\ell_{\C} \leq \ell(m)$.

For the record, we now give a rough analysis of the sizes of $\ell(m)$ and $k(m)$. First note that Balogh et al~\cite{balogh:a-jump-to-the-bell:} proved that the function $f(p,q,r)$ in Proposition~\ref{prop:f-well-defined} satisfies
\[f(p,q,r) = 2(r-1)(r-2)f(p-1,q,r)f(p, q-1, r) + 1\]
whenever $p,q,r>2$, while $f(1,q,r)=f(p,1,r)=f(p,q,1)=1$, $f(2,q,r)=f(p,2,r)=r$ and $f(p,q,2)=2$. From this, we can deduce (e.g.\ by induction) that $f(p,q,r) \leq r^{2^{p+q}-3}\leq r^{2^{p+q}}$.

Next, naive bounds for Ramsey's theorem give us $R(m)\leq 2^{2m}$ and $M=R_4(2m-1)\leq 4^{4(2m-1)}\leq 2^{16m}$, while the comments after the proof of Lemma~\ref{lem:uniform-bags} tell us that $n_k(t)\leq 2^{k-2}t2^{2^kt}$. Thus, we have
\[\ell(m) = f(M,M,M) \leq M^{4^{M}}\leq 2^{16m4^{2^{16m}}}\]
and 
\[k(m) = n_M(R(m))\leq 2^{M-2}R(m)2^{2^{M}R(m)} \leq 2^{2^{2^{16m}}4^m}2^{2^{16m}}4^m.\]

\begin{theorem}\label{periodic}
Let $\C=\free(G_1,G_2, \ldots, G_s)$ be a class with finite distinguishing number and let $m=\max \{|G_i|: 1 \leq i \leq s\}$. 
If $\C$ is above the Bell numbers then it contains a class $\P(w,H)$ where $w$ is a 
periodic word on at most $\ell(m)$ letters and with period $p(m) \leq  {\ell(m)}^m+1$, where $\ell(m)$ is as given in Theorem~\ref{thm:infinitetofinite}. 
\end{theorem}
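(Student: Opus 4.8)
The plan is to combine Theorem~\ref{factors} (or rather its strengthening to periodic words for finitely defined classes, cited from~\cite[Theorem 4.6]{atminas:deciding-the-bell:}) with the explicit bounds on $k_\C$ and $\ell_\C$ established immediately above. Since $\C$ is finitely defined and above the Bell numbers, we know that $\C$ contains some class $\P(w,H)$ where $w$ is periodic and $H$ has order at most $\ell_\C$. By the previous theorem, $\ell_\C < \ell(m)$, so $H$ has at most $\ell(m)$ vertices, which gives the bound on the alphabet size directly. The substance of the argument is therefore to bound the period $p(m)$ of the word $w$.

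The key step is to show that if $\P(w,H)$ with $w$ periodic lies inside $\C=\free(G_1,\dots,G_s)$, then $w$ can be taken to have period at most $\ell(m)^m+1$. The natural approach is a pumping/minimality argument on the period: suppose $w$ has period $c$, and consider the graphs $G_{w,H}(i_1,\dots,i_k)$ that $\P(w,H)$ generates. If $c$ is large — specifically larger than $\ell(m)^m+1$ — then among any $\ell(m)^m+1$ consecutive positions of $w$, two of them, say positions $a$ and $b$ with $a<b$, must carry the same letter \emph{and} agree on the window of the $m-1$ letters immediately following (there are at most $\ell(m)^{m}$ possibilities for such a window of length $m$, since the alphabet has size at most $\ell(m)$; here one uses that only adjacency within distance $1$ in the word behaves ``specially'' in the definition of $G_{w,H}$, while non-adjacent positions are governed purely by $H$). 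One then argues that deleting the segment of $w$ strictly between positions $a$ and $b$ — i.e.\ splicing $w$ at the repeated window — yields a new periodic word $w'$ with strictly smaller period, and that $\P(w',H)\subseteq\C$ still holds: every graph generated from $w'$ is already generated from $w$ by choosing index sets that avoid the deleted segment, using that the two windows are identical so the local edge-structure around the splice point is preserved. Iterating drives the period down to at most $\ell(m)^m+1$. (Alternatively, if deletion ever destroyed membership in $\C$, one extracts from the deleted segment a forbidden graph $G_i$, but $G_i$ has only $m$ vertices, and the windows of length $m$ already pin down its induced structure — contradiction. This is essentially the mechanism behind~\cite[Theorem 4.6]{atminas:deciding-the-bell:}, so it may suffice to cite it and only do the arithmetic of the bound here.)

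The counting that gives $p(m)\le \ell(m)^m+1$ is then a pigeonhole statement: a word over an alphabet of size $\ell(m)$ that is longer than $\ell(m)^m$ must contain two positions beginning identical length-$m$ factors, and the period of an almost periodic (here periodic) word admitting no such repetition within its period is at most $\ell(m)^m+1$. I would state this as a short lemma on words and then quote it.

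I expect the main obstacle to be verifying carefully that the splicing operation preserves $\P(w,H)\subseteq\C$ — i.e.\ that cutting out the segment between two identical length-$m$ windows of $w$ does not create any new graph outside $\C$. The delicate point is that the definition of $G_{w,H}$ treats positions at word-distance exactly $1$ differently from positions at larger distance, so when we splice, we must check that the new ``distance-$1$'' pair at the seam has the same edge relation as it did in the original word, and that no ``short-distance'' interactions are disturbed; since only distance $1$ is special and the length-$m$ windows match (with $m$ at least as large as any forbidden graph), every induced subgraph on at most $m$ vertices of a graph from $\P(w',H)$ is isomorphic to one from $\P(w,H)$, which is what is needed. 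Once this is in hand — or simply inherited from~\cite{atminas:deciding-the-bell:} — the rest is the bookkeeping of substituting $\ell_\C<\ell(m)$ into the period bound.
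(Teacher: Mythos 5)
Your proposal is correct in substance and rests on the same two pillars as the paper's proof: the pigeonhole count of $\ell(m)^m$ possible length-$m$ windows over an alphabet of size at most $\ell(m)$, and the block-relocation argument showing that any forbidden graph $G_i$ on at most $m$ vertices embedding into the new short-period class would already embed into $\P(w,H)$ (each maximal run of consecutive indices fits inside a length-$m$ window, and distinct runs are sent to widely separated occurrences of that window in $w$, preserving letters, order, and the distance-$1$ relation). The packaging differs: you start from the periodic word supplied by \cite[Theorem 4.6]{atminas:deciding-the-bell:} and iteratively splice out the segment between two positions beginning identical length-$m$ windows, whereas the paper works directly from the almost periodic word of Theorem~\ref{factors}, finds the repeated window among the first $\ell(m)^m+1$ positions, and in one step declares $x=(ab)^\infty$ (splitting into the disjoint and overlapping cases), proving $\P(x,H)\subseteq\C$ by the same $\varphi$-construction. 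Your route buys a slightly cleaner reduction (no case split on overlapping windows) at the cost of importing the periodic-word theorem, which the paper avoids, and of an iteration that is not really needed. Two points to tighten in a write-up: first, the splice must remove exactly one of the two matching positions (delete $w_a\cdots w_{b-1}$, say), since deleting only the letters strictly between $a$ and $b$ leaves the equal letters $w_a w_b$ adjacent at the seam and the seam windows are then no longer factors of $w$; second, the assertion that every graph from $w'$ arises from $w$ ``by choosing index sets that avoid the deleted segment'' is false as stated (the seam creates a new distance-$1$ pair), and the correct justification is exactly the block-embedding argument you give afterwards, so the earlier sentence should simply be dropped.
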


\begin{proof}
By Theorem~\ref{factors}, $\C$ contains some class $\P(w,H)$ where $w$ is almost periodic and $H$ has at most $\ell(m)$ vertices. 
Take an initial segment $u=u_1\cdots u_{\ell(m)^m+m}$ of the word $w$ of size ${\ell(m)}^m+m$, and consider the $\ell(m)^m+1$ factors $f_1,\dots,f_{\ell(m)^m+1}$ of length $m$ formed by starting at $u_1$, $u_2$, \dots, $u_{\ell(m)^m+1}$. There are $\ell(m)^m$ distinct factors of length $m$ over an alphabet of size $\ell(m)$, and therefore there exist two factors $f_i$ and $f_j$ (with, say, $i<j$) which are identical. We now distinguish two cases: if $j-i \geq m$ (in which case $f_i$ and $f_j$ are disjoint), and if $j-i < m$ (in which case $f_i$ and $f_j$ overlap). 

First suppose $j-i\geq m$, so that the factors $f_i$ and $f_j$ are disjoint.  Let $a=f_i(=f_j)$, and let the portion of the word $u$ between $f_i$ and $f_j$ be $b$. Thus, starting at letter $u_i$, inside $u$ we have a factor of the form $aba$. Since $w$ is almost periodic, it must contain arbitrarily many copies of the factor $aba$, and we claim that $\C$ contains $\P(x,H)$ with $x=(ab)^\infty$. If not, then some forbidden graph $G_i$ of $\C$ is contained in $\P(x,H)$, so there exists a subsequence $i_1<\cdots < i_k$ (where $k=|G|\leq m$) such that $G_{x,H}(i_1,\dots,i_k)\cong G_i$.

Following the proof of~\cite[Theorem 4.6]{atminas:deciding-the-bell:}, we now identify a function $\varphi: \{i_1,\dots,i_k\} \rightarrow \mathbb{N}$ with the following properties: (1) $\varphi(i_j) < \varphi(i_{j'})$ if and only if $i_j < i_{j'}$; (2) $w_{\varphi(i_j)} = x_{i_j}$ for all $j=1,\dots,k$; and (3) $\varphi(i_j) - \varphi(i_{j'}) = 1$ if and only if $i_j - i_{j'}=1$. The existence of such a $\varphi$ will tell us that 
\[G_i \cong G_{x,H}(i_1,\dots,i_k) \cong G_{w,H}(\varphi(i_1),\dots,\varphi(i_k))\in \P(w,H)\subseteq \C,\]
which is a contradiction.

To construct such a $\varphi$, consider a maximal-length sequence $i_j, i_{j+1},\dots,i_{j+p}$ consisting of consecutive integers (i.e.\ $i_{q} +1 = i_{q+1}$ for all $q=j,j+1,\dots,j+p-1$). Since this can have length at most $k\leq m$, we can find it as a factor of $x$ embedded inside some $aba$ factor of $x$. Thus, every maximal block of consecutive integers of $x$ embeds inside $aba$. Now, $aba$ appears infinitely often in $w$, and therefore we can construct $\varphi$ to map the first block of consecutive entries to a factor in the first instance of $aba$ inside $w$, the second block to a factor in the second instance of $aba$, and so on. Thus $\varphi$ is order-preserving (giving condition (1)), preserves letters (condition~(2)) and maps consecutive entries to consecutive entries (which gives (3)).

The second case we consider is where $j-i< m$, in which case the factors $f_i$ and $f_j$ of $u$ overlap. Here, we let $a=u_iu_{i+1}\cdots u_{j-1}$ denote the initial prefix of $f_i$ before $f_j$. After this initial prefix, we repeatedly see copies of $a$ until we find a suffix $a'$ of $f_j$ (which is necessarily a prefix of $a$). This means that the factor $u_iu_{i+1}\cdots u_{j+m-1}$ of $u$ forms a copy of the word $aa\cdots a a'$, where there are at least two $a$s, and where $a'$ is a prefix of $a$. Note that, since $j>i$, this factor $aa\cdots aa'$ is of length at least $m+|a|$, which means that every factor of $a^\infty$ of length $m$ appears in $aa\cdots aa'$.

We now claim that $\C$ contains the class $\P(x,H)$ where $x=a^\infty$. The argument is exactly the same as before upon replacing $aba$ with $aa\cdots aa'$, and we omit the details.

Finally, the maximum length of the period occurs in the case where $f_i$ and $f_j$ do not overlap, in which case the period has length $|ab| \leq \ell(m)^m+1$.
\end{proof}

The above theorem provides a bound on the number of minimal classes of the form $\P(w,H)$ which could be contained in 
$\C=\free(G_1, G_2, \ldots, G_s)$, by bounding the order of the graph $H$ in terms of the sizes of minimal forbidden subgraphs. From this, we can conclude the following.

\begin{corollary}\label{cor:deciding-bell}
There exists a procedure to decide whether $\C=\free(G_1, G_2, \ldots, G_s)$ is above or
 below the Bell numbers, with running time $r(m)$ satisfying
\[ \log\log\log\log \left(r(m)\right) \leq m^{1+o(1)}\] 
where $m=\max\{|G_1|,\dots,|G_s|\}$.
\end{corollary}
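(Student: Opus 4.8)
The plan is to assemble the pieces already established in the paper into a terminating decision procedure, and then observe that all the quantities involved are bounded by explicit functions of $m$. First I would describe the procedure itself: on input $G_1,\dots,G_s$, set $m=\max\{|G_i|\}$, and use the easy-to-check characterisation of Balogh et al.\ (mentioned at the start of Section~\ref{sec-bound}) to decide whether $k_\C=\infty$; if so, then by Theorem~\ref{thm:infinite} the class contains some $\X_j$ and hence (by~\cite{balogh:a-jump-to-the-bell:}) lies above the Bell numbers, so we output ``above'' and halt. Otherwise $k_\C<\infty$, and by the bound preceding Theorem~\ref{periodic} we know $\ell_\C<\ell(m)$, so by Theorem~\ref{periodic} if $\C$ is above the Bell numbers it must contain some class $\P(w,H)$ with $H$ having at most $\ell(m)$ vertices and $w$ periodic with period at most $p(m)\le \ell(m)^m+1$.

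The second step is to turn this into a finite search. There are only finitely many graphs $H$ (with loops allowed) on at most $\ell(m)$ vertices, and for each such $H$ only finitely many periodic words $w$ over $V(H)$ of period at most $p(m)$; so the collection of candidate classes $\P(w,H)$ to check is finite and its size is bounded by a function of $m$. For each candidate $\P(w,H)$, we test whether $\P(w,H)\subseteq\C$, i.e.\ whether none of $G_1,\dots,G_s$ lies in $\P(w,H)$. Checking whether a fixed graph $G_i$ lies in $\P(w,H)$ is decidable: since $\P(w,H)$ is hereditary and generated by the graphs $G_{w,H}(i_1,\dots,i_k)$, membership of a graph on $k\le m$ vertices can be settled by examining the finitely many relevant windows of $w$ — and because $w$ is periodic with period $p(m)$, it suffices to look at increasing sequences $i_1<\dots<i_k$ lying within an initial segment of $w$ of length bounded by a function of $m$ and $k$ (this is precisely the kind of ``localisation'' argument used in the proof of Theorem~\ref{periodic} to pull factors back from $x$ to $w$). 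If some candidate $\P(w,H)$ is contained in $\C$, output ``above''; if no candidate is, output ``below''. Correctness in the ``below'' direction is immediate from Theorem~\ref{periodic} (its contrapositive: if $\C$ were above the Bell numbers it would contain one of the candidates); correctness in the ``above'' direction follows because any $\P(w,H)$ lies above the Bell numbers, hence so does any class containing it.

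Finally I would bound the running time. Every loop in the procedure ranges over a set whose size is bounded explicitly in terms of $m$ (via $\ell(m)$, $p(m)$, $k(m)$ and the crude Ramsey bounds listed after Lemma~\ref{lem:uniform-bags}), and every individual membership test $G_i\in\P(w,H)$ runs in time bounded by a function of $m$ (since it inspects a bounded-length prefix of a periodic word and a bounded number of vertex-subsets). Composing these finitely many bounded quantities yields a single function of $m$ bounding the whole run time, which is exactly the claim. The main obstacle here is not conceptual but bookkeeping: one must be careful that the test ``$G_i\in\P(w,H)$'' is genuinely decidable in bounded time despite $w$ being infinite, and the key observation making this work is that periodicity of $w$ lets us restrict attention to an initial segment of length at most roughly $m\cdot p(m)$ — any occurrence of a pattern on $k\le m$ vertices that uses widely separated indices can, by periodicity, be translated to one using indices within such a segment, using the same window-matching idea already spelled out in the proof of Theorem~\ref{periodic}. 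Everything else is a routine, if tedious, composition of the explicit bounds already recorded in this section.
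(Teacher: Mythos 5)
Your proposal is correct and follows essentially the same route as the paper: first dispose of the infinite-distinguishing-number case in polynomial time, then use Theorem~\ref{periodic} to reduce to a finite list of candidate classes $\P(w,H)$ with $|V(H)|\le\ell(m)$ and period at most $p(m)$, and decide each membership test $G_i\in\P(w,H)$ by localising, via periodicity, to an initial segment of $w$ of length bounded by a function of $m$ (the paper uses $2mp(m)$). The correctness argument in both directions and the final composition of bounds match the paper's proof.
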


\begin{proof}
First, checking whether $\C$ contains one of the minimal classes $\X_1,\dots,\X_{13}$ with infinite distinguishing number can be done in polynomial time, so we assume that $\C$ has $k_\C<\infty$. Now, by Theorem~\ref{thm:infinitetofinite}, if $\C$ lies above the Bell numbers then it contains some class $\P(w,H)$ from a finite list, ranging over all graphs $H$ (with loops allowed) of order at most $\ell_\C\leq\ell(m)$, and all periodic words $w$ whose period is less than $p(m)\leq \ell(m)^m+1$.

For each class $\P(w,H)$ from this list, it suffices to check for each $G_i$ ($i=1,\dots,s$) whether $G_i\in\P(w,H)$. Since $|G_i|\leq m$, if $G_i\in \P(w,H)$ then $G_i$ can be embedded in an initial segment of $w$ of length at most $2mp(m)$.

Thus $\C$ lies below the Bell numbers if and only if every graph $G_{w,H}(1,2,\dots,2mp(m))$ contains some $G_i$ as an induced subgraph, ranging over all graphs $H$ with up to $\ell(m)$ vertices, and all periodic words with period up to length $p(m)$.

To bound the runtime $r = r(m)$, note that there are no more than $p(m)\cdot 2^{\binom{p(m)}{2}}$ possible choices of graph $G_{w,H}(1,2,\dots,2mp(m))$ where $H$ has order at most $\ell(m) \leq p(m)$ and $w$ has period at most $p(m)$. In addition, the total number $s$ of possible forbidden graphs $G_i$ of $\C$ satisfies $s\leq m\cdot 2^{\binom{m}{2}}$. Checking whether $G_i$ is an induced subgraph of $G_{w,H}(1,2,\dots,2mp(m))$ can be completed by considering every subset of size $m$, i.e.~$\binom{2mp(m)}{m}\leq (2mp(m))^m$ instances of the graph isomorphism problem on graphs with at most $m$ vertices, which has runtime at most $2^{O(\sqrt{m\log m})}$ (see Babai, Kantor and Luks~\cite{Babai:computational-complexity:}). 

Thus, we have 
\[r(m) \leq p(m) 2^{\binom{p(m)}{2}} \cdot m 2^{\binom{m}{2}} \cdot (2mp(m))^m \cdot 2^{O(\sqrt{m\log m})}\]
from which we note the dominant term is $2^{\binom{p(m)}{2}}$ and is a quadruple exponential in $m$. The stated bound then follows.
\end{proof}

Together with the results from the previous section we can now conclude with the following decision procedure, which also completes the proof of Theorem~\ref{thm-section-one-statement}.

\begin{corollary}
Let $\C=\free(G_1, G_2, \ldots, G_s)$ be a class with finite distinguishing number and $m=\max\{|G_1|, |G_2|, \ldots, |G_s|\}$.
Then there is an algorithm to decide whether $\C$ is well-quasi-ordered or not, whose running time $r(m)$ satisfies $\log\log\log\log \left(r(m)\right) \leq m^{1+o(1)}$.
\end{corollary}

\begin{proof}
By Theorems~\ref{belowwqo} and~\ref{thm:abovebell}, $\C$ is well-quasi-ordered if and only if it lies below the Bell numbers. However, this procedure has an algorithm with the required running time by Corollary~\ref{cor:deciding-bell}.
\end{proof}

%\begin{proof}
%Given an input class $Free(G_1, \ldots, G_s)$, take $m=\max \{|G_i|: 1 \leq i \leq s\}$. 
%Then, to decide whether the imput class is above 
%the Bell number, we only need to check if it contains a periodic class $P(w,H)$ of period not bigger 
%than $m \times {L(m)}^m$.
%We can consider these as cyclic classes and each of them determined by the graph $H$. 
%As there are $2^l$ graphs of size $l$, 
%we have to check $2+4+\ldots, 2^{m \times {L(m)}^m}$ classes, and checking each of them, 
%as we have shown in our algorithm
%can be done in time depending on $m$ and $|H| \leq m \times {L(m)}^m$ only. Thus we are done. 
%\end{proof}

%
%
%
%
%
%
%
%
%
%
\section{Concluding remarks and open problems}\label{sec-conclusion}

\paragraph{Boundary classes} In the previous section, we proved that every class $\P(w,H)$ where $w$ is almost periodic is a boundary class, and consequently defines a minimal antichain in the induced subgraph ordering. This family includes every previously-identified boundary class from~\cite{korpelainen:boundary-properties:}. 

It would be interesting to know what other boundary classes (or, equivalently, minimal antichains) there are. When a class has infinite distinguishing number, it contains at least one of the classes $\X_1,\dots,\X_{13}$. To our minds, the natural first place to start exploring these structures are in classes that contain one of $\X_6$, $\X_7$ or $\X_{13}$, being those that contain vertices that are linearly ordered by their neighbourhood inclusions. 

\paragraph{Periodic minimal antichains} When $\C$ is a finitely defined class with finite distinguishing number, it contains some class $\P(w,H)$ where $w$ is a \emph{periodic} word. Since $\P(w,H)$ is a boundary class, it defines some minimal antichain, being the forbidden graphs of $\P(w,H)$. It seems likely (though we have not proved it here) that the sufficiently large elements of this minimal antichain will have a periodic construction. This would add evidence to support the conjecture that if \emph{any} finitely defined class $\C$ contains some infinite antichain, then it contains a periodic one. 

\paragraph{Labelled well-quasi-ordering}
Lemma~\ref{lemma:lwqo-fd} (Proposition~3 in Daligault, Rao and Thomass\'e~\cite{daligault:well-quasi-orde:}) tells us that every labelled well-quasi-ordered class must be finitely defined. A partial converse to this has been conjectured:
\begin{conjecture}[Korpelainen et al~\cite{korpelainen:boundary-properties:}]
Let $\C$ be a well-quasi-ordered hereditary class. Then $\C$ is finitely defined if and only if it is labelled well-quasi-ordered.
\end{conjecture}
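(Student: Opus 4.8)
The plan is to prove the substantive direction of the conjecture — that a well-quasi-ordered, finitely defined hereditary class $\C$ is labelled well-quasi-ordered — since the converse is exactly Lemma~\ref{lemma:lwqo-fd}. Since this is an open problem, what follows is a plan of attack rather than a routine argument. I would first split on the distinguishing number. If $k_\C<\infty$, the conjecture already follows from the results of this paper: a finitely defined class above the Bell numbers is not well-quasi-ordered by the second part of Theorem~\ref{thm:abovebell}, so a well-quasi-ordered such class lies below the Bell numbers, whence it is labelled well-quasi-ordered by Theorem~\ref{belowwqo}. Hence all the content of the conjecture lies in the case $k_\C=\infty$, where by Theorem~\ref{thm:infinite} the class $\C$ contains one of the thirteen minimal classes $\X_1,\dots,\X_{13}$.

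For $k_\C=\infty$ I would argue by contraposition: starting from a finitely defined class $\C=\free(G_1,\dots,G_s)$ that is \emph{not} labelled well-quasi-ordered, I would construct an infinite antichain in $\C$ under the (unlabelled) induced subgraph order, contradicting well-quasi-ordering. By the standard reformulation of labelled well-quasi-ordering in terms of colourings from a finite antichain, failure of labelled well-quasi-ordering yields an integer $n$ and an infinite antichain $\{A_1,A_2,\dots\}$ of $n$-coloured graphs of $\C$ in the colour-preserving induced subgraph order. The goal is then to trade colour for structure: fix a large parameter $N$ and replace each vertex of colour $i$ by a homogeneously attached gadget $\Gamma_i$ (for instance a clique on $N+i$ vertices, or an independent set, depending on which $\X_j$ is being used), so that the resulting uncoloured graph $\widehat{A_j}$ satisfies (a) $\widehat{A_j}\in\C$, (b) the colouring of $A_j$ is reconstructible from $\widehat{A_j}$, and (c) $A_j$ embeds in $A_{j'}$ (colour-preserving) if and only if $\widehat{A_j}$ is an induced subgraph of $\widehat{A_{j'}}$. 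Given (a)--(c), $\{\widehat{A_1},\widehat{A_2},\dots\}$ is an infinite antichain inside $\C$. The reason such a construction should be available at all is that $k_\C=\infty$ forces $\C$ to contain arbitrarily large $r$-stars, $r$-costars and $r$-skewchains (Section~\ref{sec-bound}): inside such a configuration the large homogeneous bags can be reshaped into colour gadgets while the apex vertices play the role of connectors, and the presence of the corresponding $\X_j$ is exactly the guarantee that this reshaping keeps us within reach of $\C$.

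The main obstacle — and why the conjecture is hard — is property (a): ensuring that the gadgeted graphs avoid every one of $G_1,\dots,G_s$. Containing some $\X_j$ tells us only that $\C$ is structurally \emph{at least} as rich as $\X_j$; it says nothing about which further structure $\C$ forbids, and a crude gadget can readily create an induced copy of some $G_i$ (each of bounded order $m=\max_i|G_i|$). What is really required is a structural classification of the well-quasi-ordered, finitely defined classes of infinite distinguishing number — one level above the Balogh--Bollob\'as--Weinreich structure theorem at the Bell numbers: such a classification should describe precisely how much extra structure may be attached to an $\X_j$-skeleton while well-quasi-ordering is preserved, and should exhibit that this attachment is tame enough to admit a uniform colour encoding. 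No such classification is currently known; moreover these classes are not even known to have bounded clique-width (the Daligault--Rao--Thomass\'e conjecture~\cite{daligault:well-quasi-orde:}), so the alternative route of propagating labels through a bounded-width decomposition is unavailable as well.

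Concretely I would proceed as follows: (i) record the $k_\C<\infty$ reduction above; (ii) handle the ``matching-like'' classes $\X_1,\dots,\X_5,\X_8,\dots,\X_{12}$, where the homogeneous-bag structure should reduce the verification of (a)--(c) to a finite, if delicate, case analysis of the forbidden graphs against the gadget; (iii) attack the recalcitrant classes $\X_6$, $\X_7$ and $\X_{13}$ — those containing vertices linearly ordered by neighbourhood inclusion, exactly the families flagged in Section~\ref{sec-conclusion} — by first proving, in the spirit of the $\P(w,H)$ analysis of Section~\ref{5.5}, a structure theorem for finitely defined well-quasi-ordered classes containing them (one expects the well-quasi-ordered ones to be, up to a bounded perturbation, governed by \emph{bounded}-width word constructions rather than the unbounded $\P(w,H)$), and then reading a colour encoding off that description. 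Step~(iii) is where essentially all of the difficulty resides.
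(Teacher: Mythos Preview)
The paper does not prove this statement: it is listed in Section~\ref{sec-conclusion} as an open conjecture, with the remark that the results of the paper confirm it in the special case $k_\C<\infty$. Your reduction of that special case is exactly the argument the paper has in mind (combine Theorem~\ref{thm:abovebell} with Theorem~\ref{belowwqo}), so on that part you agree with the paper.

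Everything beyond that is, as you say yourself, a plan of attack rather than a proof, and should be read as such. Two comments on the plan. First, your step~(ii) is more optimistic than the evidence warrants: even for the ``matching-like'' minimal classes $\X_1,\dots,\X_5$ there is no a priori bound on how the ambient class $\C$ may constrain gadget attachments, because knowing $\X_j\subseteq\C$ gives no control over what $\C$ forbids \emph{outside} $\X_j$; the ``finite, if delicate, case analysis'' you allude to has no clear termination without precisely the structure theorem you defer to step~(iii). Second, your framing in step~(iii) --- that well-quasi-ordered classes containing $\X_6,\X_7,\X_{13}$ should be governed by bounded-width word constructions up to bounded perturbation --- is a reasonable heuristic but is itself a conjecture of comparable difficulty to the one you are trying to prove; the paper explicitly flags these classes as the natural starting point for further work precisely because nothing of this kind is known. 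In short: your write-up is an honest summary of where the problem stands, and there is no ``paper's own proof'' to compare it against.
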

We remark that our work here confirms this conjecture when $\C$ has finite distinguishing number.

\paragraph{Classes below the Bell numbers}
The proof of Theorem~\ref{belowwqo} (either the one given here or the essentially identical argument in~\cite{balogh:the-speed-of-he:,korpelainen:two-forbidden:}) did not use the fact that classes below the Bell numbers are finitely defined to establish the result (instead, this came as a consequence of labelled well-quasi-ordering). Thus, the following question remains open: 

\begin{question}
If $\C=\free(G_1,\dots,G_s)$ is a class below the Bell numbers, can every graph in $\C$ be seen as a $k$-uniform graph where $k$ is bounded by some function of $s$ and $m=\max\{|G_1|, |G_2|, \ldots, |G_s|\}$?
\end{question}

\paragraph{Acknowledgements} We are grateful to Viktor Zamaraev and the anonymous referees for comments on earlier versions of this paper. We must also acknowledge the substantial input of Vadim Lozin to this work, who indicated to the first author that it should be possible to prove that hereditary classes above the Bell numbers are either not well-quasi-ordered, or have infinite distinguishing number.

\bibliographystyle{acm}
\bibliography{refs}

\begin{thebibliography}{10}

\bibitem{alekseev:range-of-values:}
{\sc Alekseev, V.~E.}
\newblock Range of values of entropy of hereditary classes of graphs.
\newblock {\em Diskret. Mat. 4}, 2 (1992), 148--157.

\bibitem{atminas:deciding-the-bell:}
{\sc Atminas, A., Collins, A., Foniok, J., and Lozin, V.~V.}
\newblock Deciding the {B}ell number for hereditary graph properties.
\newblock {\em SIAM J. Discrete Math. 30}, 2 (2016), 1015--1031.

\bibitem{Babai:computational-complexity:}
{\sc Babai, L., Kantor, W.~M., and Luks, E.~M.}
\newblock Computational complexity and the classification of finite simple
  groups.
\newblock In {\em Proceedings of the 24th Annual Symposium on Foundations of
  Computer Science\/} (Washington, DC, USA, 1983), SFCS '83, IEEE Computer
  Society, pp.~162--171.

\bibitem{balogh:the-speed-of-he:}
{\sc Balogh, J., Bollob{\'a}s, B., and Weinreich, D.}
\newblock The speed of hereditary properties of graphs.
\newblock {\em J. Combin. Theory Ser. B 79}, 2 (2000), 131--156.

\bibitem{balogh:a-jump-to-the-bell:}
{\sc Balogh, J., Bollob{\'a}s, B., and Weinreich, D.}
\newblock A jump to the {B}ell number for hereditary graph properties.
\newblock {\em J. Combin. Theory Ser. B 95}, 1 (2005), 29--48.

\bibitem{cook:an-extension-of:}
{\sc Cook, B.~M.}
\newblock An extension of {R}amsey's theorem to multipartite graphs.
\newblock Master's thesis, Georgia State University,
  http://scholarworks.gsu.edu/math\_theses/29, 2007.

\bibitem{daligault:well-quasi-orde:}
{\sc Daligault, J., Rao, M., and Thomass{\'e}, S.}
\newblock Well-quasi-order of relabel functions.
\newblock {\em Order 27}, 3 (2010), 301--315.

\bibitem{damaschke:induced-subgrap:}
{\sc Damaschke, P.}
\newblock Induced subgraphs and well-quasi-ordering.
\newblock {\em J. Graph Theory 14}, 4 (1990), 427--435.

\bibitem{ding:subgraphs-and-w:}
{\sc Ding, G.}
\newblock Subgraphs and well-quasi-ordering.
\newblock {\em J. Graph Theory 16}, 5 (1992), 489--502.

\bibitem{ding:on-canonical-antichains:}
{\sc Ding, G.}
\newblock On canonical antichains.
\newblock {\em Discrete Math. 309}, 5 (2009), 1123--1134.

\bibitem{gustedt:finiteness-theo:}
{\sc Gustedt, J.}
\newblock Finiteness theorems for graphs and posets obtained by compositions.
\newblock {\em Order 15\/} (1999), 203--220.

\bibitem{korpelainen:two-forbidden:}
{\sc Korpelainen, N., and Lozin, V.}
\newblock Two forbidden induced subgraphs and well-quasi-ordering.
\newblock {\em Discrete Mathematics 311\/} (2011), 1813--1822.

\bibitem{korpelainen:boundary-properties:}
{\sc Korpelainen, N., Lozin, V.~V., and Razgon, I.}
\newblock Boundary properties of well-quasi-ordered sets of graphs.
\newblock {\em Order 30}, 3 (2013), 723--735.

\bibitem{lozin:wqo-vs-cw}
{\sc Lozin, V., Razgon, I., and Zamaraev, V.}
\newblock Well-quasi-ordering versus clique-width.
\newblock {\em J. Combin. Theory Ser. B 130\/} (2018), 1--18.

\bibitem{nash-williams:on-well-quasi-o:}
{\sc Nash-Williams, C. S. J.~A.}
\newblock On well-quasi-ordering finite trees.
\newblock {\em Proc. Cambridge Philos. Soc. 59\/} (1963), 833--835.

\bibitem{ramsey}
{\sc Ramsey, F.~P.}
\newblock On a {P}roblem of {F}ormal {L}ogic.
\newblock {\em Proc. London Math. Soc. S2-30}, 1 (1930), 264.

\bibitem{robertson:graph-minors-i-xx:}
{\sc Robertson, N., and Seymour, P.}
\newblock Graph minors {I}--{XX}.
\newblock {\em J. Combinatorial Theory Ser. B\/} (1983--2004).

\bibitem{scheinerman:on-the-size:}
{\sc Scheinerman, E.~R., and Zito, J.}
\newblock On the size of hereditary classes of graphs.
\newblock {\em J. Combin. Theory Ser. B 61}, 1 (1994), 16--39.

\end{thebibliography}

\appendix
\section{Classes below the Bell numbers: a proof of Theorem~\ref{belowwqo}}\label{subsec-belowbell}

Instead of simply making reference to Theorem~30 of~\cite{balogh:the-speed-of-he:}, the starting point we will use is a structural characterisation provided a few pages earlier in the same paper, which builds on constants guaranteed by Lemma~\ref{finitedistinguishingnumber}.

\begin{theorem}[{Balogh et al~\cite[Theorem~28]{balogh:the-speed-of-he:}}] \label{paths}
Let $\C$ be a hereditary class with $k_\C<\infty$, and let $t\geq 5\times 2^{\ell_\C}d_\C$ be fixed.
Then $|\C^n| \geq n^{(1+o(1))n}$ if and only if for every $m$ there
exists a $t$-strong $(\ell_\C, d_\C)$-graph~$G$ in~$\C$ such that its sparsification~$\phi(G)$
has a component of order at least~$m$.
\end{theorem}

Combined with Lemma~\ref{finitedistinguishingnumber}, this says that for any class $\C$ that lies below the Bell numbers, there exist constants $c_\C$, $d_\C$ and $m_C$ such that for any graph $G\in\C$, after removing at most $c_\C$ vertices from $G$ we can form a $t$-strong $(\ell_C,d_C)$-graph. Moreover, for any $(\ell_C,d_\C)$-partition of this graph, the sparsification with respect to this partition has components each of size at most $m_\C$. This is essentially the same statement as Theorem~30 of~\cite{balogh:the-speed-of-he:}.

The other result that is needed is Theorem~2 of \cite{korpelainen:two-forbidden:}, so we now introduce the necessary definitions in order to be able to state it. 

Fix $k\in \mathbb{N}$, let $F_k$ be a graph with $V(F_k) = \{1,\dots,k\}$, and let $M$ be an undirected graph with loops allowed on the same vertex set $\{1,\dots,k\}$.\footnote{In other texts, $M$ is (equivalently) defined to be a $k\times k$ symmetric 0/1 matrix. We have defined it to be another graph here to improve consistency with the construction of $\P(w,H)$ classes.} We construct an infinite graph $H(M,F_k)$ as follows. Let $V(H(M,F_k))=V_1\cup V_2\cup \cdots \cup V_k$ consists of $k$ disjoint copies of $\mathbb N$ (complete with the ordering on $\mathbb N$). Within each set $V_i$, we form a clique if there is a loop on vertex $i$ in the graph $M$, and an independent set if there is no loop. Between each pair of sets $V_i$ and $V_j$ with $i\neq j$, the graphs $M$ and $F_k$ give four possibilities:
\begin{itemize}
\item $ij\in E(F_k)$, $ij\not\in E(M)$: matching between $V_i$ and $V_j$ (connecting the $\ell$th vertex of $V_i$ to the $\ell$th vertex of $V_j$);
\item $ij \in E(F_k)$, $ij\in E(M)$: co-matching between $V_i$ and $V_j$ (the $\ell$th vertex of $V_i$ and the $\ell$th vertex of $V_j$ are co-connected);
\item $ij \not\in E(F_k)$, $ij\not\in E(M)$: $V_i$ and $V_j$ are co-joined;
\item $ij \not\in E(F_k)$, $ij\in E(M)$: $V_i$ and $V_j$ are joined.
\end{itemize}
Informally, the graph $M$ determines whether there is a high or low density of edges between $V_i$ and $V_j$, and the graph $F_k$ then specifies whether this relationship is a matching/co-matching or joined/co-joined. 

We say that a (finite) graph $G$ is \emph{$k$-uniform} if it is an induced subgraph of $H(M,F_k)$ for some pair of graphs $M$ and $F_k$. We will require the following two facts about $k$-uniform graphs:

\begin{lemma}[{Korpelainen and Lozin~\cite[Lemma 2]{korpelainen:two-forbidden:}}]\label{kuniformadd}
If a graph $G$ has a subset $W$ of at most $c$ vertices such that $G-W$ is k-uniform, then $G$ is $(2^c(k+1)-1)$-uniform.  
\end{lemma}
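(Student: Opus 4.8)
The plan is to proceed by induction on the size $c$ of the removable set $W$, mimicking the proof of Lemma~\ref{lem:uniform-bags} that appears earlier in the excerpt: each additional vertex doubles the number of parts (the $2^c$ factor) and the additive $(k+1)$ with the $-1$ accounts for the fact that the new vertex may need to sit in a part of its own. First I would treat the base case $c=0$, where the statement is vacuous: $G-W = G$ is $k$-uniform, and $2^0(k+1)-1 = k$, so $G$ is $k$-uniform.

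For the inductive step, suppose the claim holds for removable sets of size $c$, and let $G$ have a set $W$ with $|W| = c+1$ such that $G - W$ is $k$-uniform. Pick a vertex $w \in W$ and set $W' = W \setminus\{w\}$, so $|W'| = c$ and $(G - w) - W' = G - W$ is $k$-uniform; by induction $G - w$ is $k'$-uniform with $k' = 2^c(k+1)-1$. Fix $M'$ and $F_{k'}$ with $G - w \subseteq H(M', F_{k'})$, and let $V_1,\dots,V_{k'}$ be the corresponding partition of $V(G-w)$ (inherited from $H(M',F_{k'})$). The key step is now to show how to augment this structure to absorb $w$. The obstacle is that $w$'s adjacency to the vertices already placed in $V_i$ need not be ``uniform'' with respect to the ordered structure of $H$. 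I would resolve this by splitting each $V_i$ into two parts $V_i^0$ and $V_i^1$ according to whether a vertex is a non-neighbour or a neighbour of $w$; this gives at most $2k'$ parts, and on each such part $w$ behaves uniformly (co-joined to $V_i^0$, joined to $V_i^1$). Then I would add one more part $V^*= \{w\}$ for $w$ itself. Altogether this yields at most $2k' + 1$ parts.

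To finish I must exhibit graphs $M''$ and $F_{2k'+1}$ realizing $G$ as an induced subgraph of $H(M'', F_{2k'+1})$. Within each $V_i^b$ the induced subgraph is a subgraph of the original $V_i$, hence still a clique or independent set consistent with the loop/non-loop of $i$ in $M'$, so we copy the relevant loop to both new parts. Between $V_i^b$ and $V_j^{b'}$ with $i \neq j$ the relation is inherited from the $V_i$--$V_j$ relation in $H(M',F_{k'})$ (a matching, co-matching, joined, or co-joined pattern restricts to the same type of pattern on subsets, after re-indexing the copies of $\mathbb{N}$ monotonically), so we copy the corresponding $M'$- and $F_{k'}$-edges. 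Between $V_i^b$ and $V_i^{b'}$ with $b \neq b'$ the relation is again whatever it was inside $V_i$, namely joined (if $i$ has a loop) or co-joined (if not) — so we set $ij'\in E(M'')$ and $ij'\notin E(F)$, or both absent, accordingly. Finally, between $V^*$ and $V_i^0$ we make them co-joined, and between $V^*$ and $V_i^1$ we make them joined; $V^*$ has a single vertex so its within-part relation is irrelevant (no loop). This determines $M''$ and $F_{2k'+1}$ completely, and one checks that $G$ embeds into $H(M'', F_{2k'+1})$ as required. Hence $G$ is $(2k'+1)$-uniform, and since $2k'+1 = 2(2^c(k+1)-1)+1 = 2^{c+1}(k+1)-1$, the induction is complete.

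The main obstacle I anticipate is the bookkeeping in the step above: verifying that restricting the structured bipartite relations of $H(M',F_{k'})$ to the subsets $V_i^b$ genuinely yields relations of the same four admissible types (in particular that a matching between two copies of $\mathbb{N}$, when restricted to subsets and re-indexed monotonically, is still a matching between two copies of $\mathbb{N}$), and that the within-$V_i$ split into $V_i^0, V_i^1$ does not create any relation outside the permitted list. These are routine but need care; everything else is straightforward counting of parts.
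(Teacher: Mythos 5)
The paper does not actually prove this lemma: it is imported verbatim from Korpelainen and Lozin (their Lemma~2), so there is no in-paper argument to compare yours against. Judged on its own, your inductive argument is sound and is the natural proof: peel off one vertex $w$ at a time, split each of the $k'$ parts of the host graph $H(M',F_{k'})$ into neighbours and non-neighbours of $w$, add a singleton part for $w$, and check $2(2^c(k+1)-1)+1=2^{c+1}(k+1)-1$. The within-part, joined/co-joined, and $V^*$ relations all go through exactly as you say.

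The one point you should fix is the parenthetical claim that a matching between $V_i$ and $V_j$ ``restricts to the same type of pattern on subsets, after re-indexing the copies of $\mathbb{N}$ monotonically.'' Monotone re-indexing of each new part separately can destroy the matchings: if $V_i=\{a_1,a_2\}$ and $V_j=\{b_1,b_2\}$ occupy indices $1,2$ with matched pairs $a_1b_1$ and $a_2b_2$, and the split puts $V_i^0=\{a_2\}$ while $V_j^1=\{b_1,b_2\}$, then monotone re-indexing sends $a_2$ to index $1$, which would match it to $b_1$ rather than $b_2$. The correct move is simply to \emph{retain} the original index $\iota(v)$ from the embedding of $G-w$ into $H(M',F_{k'})$ when placing $v\in V_i^b$ into the copy of $\mathbb{N}$ for the new part $(i,b)$; this map is injective on each new part (being a restriction of an injective map), and since the matching/co-matching relations in $H(M'',F_{2k'+1})$ are defined by equality of indices, every cross-part relation of $G$ is reproduced verbatim. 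With that adjustment (and the trivial observation that $k$-uniform implies $k''$-uniform for $k''\geq k$, so that $|W|<c$ causes no trouble), your proof is complete.
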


\begin{theorem}[{Korpelainen and Lozin~\cite[Theorem 2]{korpelainen:two-forbidden:}}]\label{kuniformwqo}
For any fixed $k$, the set of $k$-uniform graphs is labelled-well-quasi-ordered.
\end{theorem}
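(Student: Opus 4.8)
The plan is to represent every $k$-uniform graph by a finite word over an alphabet built from the labels, so that induced-subgraph containment of $k$-uniform graphs is controlled by Higman's subword ordering, which is a well-quasi-order. Since $k$ is fixed there are only finitely many pairs $(M,F_k)$, and a graph is $k$-uniform precisely when it is a finite induced subgraph of $H(M,F_k)$ for one of these pairs. A finite union of labelled-well-quasi-ordered classes is labelled-well-quasi-ordered — given an infinite sequence of labelled graphs from the union, an infinite subsequence lies in a single part, to which the hypothesis applies — so it suffices to fix $M$ and $F_k$ and show that the class $\mathcal H_{M,F_k}$ of finite induced subgraphs of $H(M,F_k)$ is labelled-well-quasi-ordered.

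For the encoding, fix a well-quasi-order $W$ of labels. A finite $W$-labelled induced subgraph $G$ of $H(M,F_k)$ occupies finitely many of the positions of $\mathbb N$; list them in increasing order as the \emph{columns} of $G$. Each column carries at most one vertex from each bag $V_1,\dots,V_k$, so column $s$ is recorded by the function $c_s\colon\{1,\dots,k\}\to W\cup\{\bot\}$ sending $i$ to the label of the bag-$i$ vertex in that column if it is present and to $\bot$ otherwise; this exhibits $G$ as a word $w(G)=c_1c_2\cdots c_n$ over the alphabet $\Lambda=(W\cup\{\bot\})^{\{1,\dots,k\}}$. Order $W\cup\{\bot\}$ by making $\bot$ its least element (a wqo, since $W$ is), order $\Lambda$ coordinatewise (a finite product of wqos, hence a wqo), and let $\le_H$ be the resulting Higman subword order on $\Lambda^*$; then $(\Lambda^*,\le_H)$ is a well-quasi-order by Higman's lemma.

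The crux is to show that if $w(G)\le_H w(G')$ then $G$ is a labelled induced subgraph of $G'$. Given witnessing indices $\ell_1<\cdots<\ell_n$ with the $s$-th column of $w(G)$ coordinatewise below the $\ell_s$-th column of $w(G')$, map the bag-$i$ vertex of column $s$ of $G$ to the bag-$i$ vertex of column $\ell_s$ of $G'$; this map is well defined, injective, and nondecreasing on labels by construction. For adjacency, observe from the definition of $H(M,F_k)$ that whether two vertices are adjacent depends only on their two bag indices and on whether they lie in the same column; since the map preserves bag indices and, being induced by the strictly increasing function $s\mapsto\ell_s$, preserves the relation ``lying in the same column'', it preserves adjacency and non-adjacency, so it is a labelled embedding. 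Finally, given any infinite sequence of $W$-labelled graphs in $\mathcal H_{M,F_k}$, the associated sequence of words in $\Lambda^*$ has, by the well-quasi-order just established, two terms with $w(G_i)\le_H w(G_j)$ for some $i<j$, whence $G_i$ labelled-embeds into $G_j$; thus $\mathcal H_{M,F_k}$ is labelled-well-quasi-ordered, and with the first paragraph this completes the proof.

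The one delicate point is this adjacency-preservation step: it is exactly here that the matching and co-matching edges between $F_k$-adjacent bags must be shown to transport correctly, and this works only because the index map $s\mapsto\ell_s$ is strictly increasing and hence cannot merge two columns or reverse their order. Everything else — the reduction to a single $H(M,F_k)$, the construction of the alphabet, and the appeal to Higman's lemma — is routine.
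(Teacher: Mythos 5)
The paper does not prove this statement: it is imported verbatim from Korpelainen and Lozin (Theorem~2 of \emph{Two forbidden induced subgraphs and well-quasi-ordering}), so there is no in-paper proof to compare against. Your argument is correct and is essentially the canonical one (and, as far as I recall, the one in the cited source): reduce to a single $H(M,F_k)$ by finiteness of the set of pairs $(M,F_k)$, encode a finite induced subgraph column-by-column as a word over the wqo alphabet $(W\cup\{\bot\})^{\{1,\dots,k\}}$, and apply Higman's lemma. The two points that need care both check out: first, because $\bot$ is strictly below every genuine label, a coordinate witnessing a present vertex can only be dominated by a coordinate witnessing a present vertex, so the induced map really sends vertices to vertices; second, adjacency in $H(M,F_k)$ depends only on the pair of bag indices and on whether the two positions coincide, and the strictly increasing index map supplied by Higman's lemma preserves both bag indices and the same-column equivalence, so the map is a labelled induced-subgraph embedding.
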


The following technical lemma provides the connection to $(\ell,d)$-graphs that we require.

\begin{lemma} \label{kuniformld}
Let $t\geq 5\times 2^{\ell}d$, and let $G$ be a $t$-strong $(\ell,d)$-graph. If each component in the sparsification $\phi(G)$ of $G$ contains at most $m$ vertices, then $G$ is a $k$-uniform graph for $k \leq m \ell^m 2^{\binom{m}{2}+1}$.
\end{lemma}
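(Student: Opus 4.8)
The plan is to decode the definition of sparsification and re-express a $t$-strong $(\ell,d)$-graph $G$ as an induced subgraph of some $H(M,F_k)$, using the uniqueness of sparsification to pin down the large-scale structure. First I would pass from $\pi$ to the prime partition $p(\pi) = \{V_1,\dots,V_{\ell'}\}$ guaranteed by Lemma~\ref{symdif0}: this is a $t$-strong $(\ell,\ell d)$-partition, and crucially every pair $(V_i,V_j)$ is either $d$-dense or $d$-sparse (since $t\geq 2d+1$), so the sparsification $\phi(G)$ is obtained from $G$ by bipartite-complementing exactly the $d$-dense pairs. The key point is that inside $\phi(G)$, the pair $(V_i,V_j)$ is now $d$-\emph{sparse} for all $i,j$ (including $i=j$), so $\phi(G)$ has maximum degree at most $\ell d$ by Lemma~\ref{symdif1}-type reasoning — in fact by the $(\ell,\ell d)$-partition property — and its components are small by hypothesis (at most $m$ vertices each).

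The heart of the argument is then to build the pair $(M,F_k)$ witnessing $k$-uniformity. The index set $\{1,\dots,k\}$ should be taken to be the set of \emph{component types} of $\phi(G)$: since each component has at most $m$ vertices and lives inside some bag-structure with at most $\ell$ bags, there are boundedly many isomorphism types of "labelled component" (a component together with the data of which bag each of its vertices lies in, and the induced graph on it), and this is where the bound $k\le m\ell^m 2^{\binom m2+1}$ will come from: $m$ vertices, each assigned one of $\le\ell$ bag-labels ($\ell^m$), with one of $2^{\binom m2}$ possible internal edge-sets, and the extra factor of $2$ absorbing a loop/no-loop choice or an off-by-one. Within each $V_i$, a loop in $M$ records whether $V_i$ is a clique or independent set; between $V_i$ and $V_j$, whether the pair was $d$-dense (high density: $M$-edge) or $d$-sparse (low density: $M$-non-edge) is recorded in $M$, and whether that relationship is "join/co-join" versus "matching-like" goes into $F_k$. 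The sparsity of $\phi(G)$ forces the matching-like relationships to genuinely look like the matchings in the $H(M,F_k)$ construction once we line up the vertices of $\phi(G)$ component-by-component with consecutive coordinates of the copies of $\mathbb N$.

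The main obstacle I expect is the bookkeeping needed to \emph{simultaneously} realise all bags: one must choose, for each component of $\phi(G)$, an order-consistent placement of its vertices into the $\mathbb N$-coordinates of the bags $V_i$ of $H(M,F_k)$ so that (a) within a bag the clique/independent structure is automatic, (b) the $d$-dense (join/co-join) pairs come for free from the definition of $H$, and (c) the $d$-sparse pairs — which after sparsification are genuinely sparse and hence a disjoint union of small gadgets — are matched up with the matching/co-matching edges of $H$. Step (c) is where one actually uses that components are small and that distinct components attach to the bags in a "type-consistent" way; the subtlety is ensuring two different components of the same type can be placed into disjoint coordinate-blocks without creating spurious edges between them, which is exactly guaranteed because between components there are no edges in $\phi(G)$ and the $d$-dense part is uniform. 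Once $G$ is shown to be an induced subgraph of $H(M,F_k)$ with this $k$, we are done; the uniqueness of sparsification (from $t\geq 5\times 2^\ell d$) is what lets us speak of "the" structure $(M,F_k)$ independent of the chosen partition, matching the hypotheses of Theorem~\ref{paths}.
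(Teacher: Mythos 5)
Your proposal is essentially the paper's proof: the paper defines two components of $\phi(G)$ to be $\pi$-equivalent when some isomorphism between them preserves the bag of every vertex, takes one representative component per class, lets the index set of the $k$-uniform structure be the \emph{vertices} of these representatives (whence $k\le m\ell^m 2^{\binom{m}{2}+1}$), reads off $F_k$ as $\phi(G)$ restricted to those vertices and $M$ from the $d$-dense/$d$-sparse data of the bags, and realises $G$ inside $H(M,F_k)$ by sending the $j$-th component of each type to coordinate $j$. The only imprecision is your phrase ``the index set should be the set of component types'': one index per type cannot work, since a component need not induce a clique or independent set inside a single part $V_i$ of $H(M,F_k)$; it must be one index per vertex of each representative component, which is what your own count (with its factor of $m$) and your coordinate-alignment discussion already implicitly assume.
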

\begin{proof}
Since $G$ is a $t$-strong $(\ell,d)$ graph for $t\geq 5\times 2^\ell d$, the sparsification $\phi(G)$ is unique irrespective of the chosen $(\ell,d)$ partition. Thus, fix any $(\ell,d)$ partition $\pi$ of $G$, and consider the connected components of $\phi(G)$, all of which contain at most $m$ vertices. 

We define the following equivalence relation: two components $C$ and $C'$ of $\phi(G)$ are \emph{$\pi$-equivalent} if there is a graph isomorphism $f: C \rightarrow C'$ such that for every vertex $v \in C$ both $v$ and $f(v)$ belong to the same bag $V_i$ of the partition $\pi$. 

Set $V'$ to be a subset of vertices of $V(G)$ which in $\phi(G)$ induces exactly one component from each $\pi$-equivalence class. There are fewer than $2^{\binom{m}{2}+1}$ graphs on at most $m$ vertices, and each such graph can be embedded into bags of the partition $\pi$ in at most $\ell^m$ different (non-$\pi$-equivalent) ways, therefore (since each graph has at most $m$ vertices)  $|V'| \leq  m \ell^m 2^{\binom{m}{2}+1}$.

Now set $k=|V'|$, and let $F_k$ be the subgraph of $\phi(G)$ induced on the set $V'$. For two vertices $u, v \in V'$, $u \in V_i$, $v \in V_j$ set $uv\not\in E(M)$ if $(V_i,V_j)$ is $d$-sparse, and $uv\in E(M)$ if $(V_i, V_j)$ is $d$-dense. Now, by the definition of $k$-uniformity, $G$ is an induced subgraph of $H(M,F_k)$, as required.
\end{proof}

The proof of Theorem~\ref{belowwqo} will be completed by the next theorem, when combined with Theorem~\ref{kuniformwqo}.

\begin{theorem}\label{thm:below-bell-k-uniform}Let $\C$ be a hereditary class below the Bell numbers. Then there exists a constant $k$ such that every graph in $\C$ is a $k$-uniform graph.\end{theorem}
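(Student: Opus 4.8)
The plan is to assemble the ingredients that have already been set up in this appendix. By Theorem~\ref{paths} combined with Lemma~\ref{finitedistinguishingnumber}, since $\C$ lies below the Bell numbers we obtain constants $\ell_\C$, $d_\C$, $c_\C$ and $m_\C$ with the following property: fixing $t = 5\times 2^{\ell_\C}d_\C$, every $G\in\C$ contains an induced subgraph $G'$ with $|V(G)\setminus V(G')| < c_\C$ such that $G'$ is a $t$-strong $(\ell_\C,d_\C)$-graph, and (invoking Theorem~\ref{paths} in the ``only if'' direction, applied to the hereditary subclass generated by $G'$, or more directly the reformulation stated right after Theorem~\ref{paths}) every component of the sparsification $\phi(G')$ has order at most $m_\C$. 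So first I would record these constants.

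Next, I would apply Lemma~\ref{kuniformld} to $G'$: since $G'$ is a $t$-strong $(\ell_\C,d_\C)$-graph with $t\geq 5\times 2^{\ell_\C}d_\C$ whose sparsification has all components of order at most $m_\C$, the lemma gives that $G'$ is $k'$-uniform for $k' \leq m_\C\,\ell_\C^{\,m_\C}\,2^{\binom{m_\C}{2}+1}$, a constant depending only on $\C$. Then, since $G$ is obtained from the $k'$-uniform graph $G'$ by adding back fewer than $c_\C$ vertices, Lemma~\ref{kuniformadd} gives that $G$ is $k$-uniform for $k \leq 2^{c_\C}(k'+1)-1$. This bound is independent of $G$, so setting $k = 2^{c_\C}(k'+1)-1$ with $k'$ as above works uniformly over all of $\C$, which is exactly the statement.

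\textbf{Main obstacle.} The only real subtlety is making precise the claim that ``every component of $\phi(G')$ has order at most $m_\C$'' holds for a single fixed $m_\C$ working for \emph{all} $G\in\C$. Theorem~\ref{paths} is phrased as an ``if and only if'' about the whole class $\C$: because $\C$ is below the Bell numbers it is \emph{not} the case that for every $m$ there is a $t$-strong $(\ell_\C,d_\C)$-graph in $\C$ with a sparsification component of order $\geq m$; negating this quantifier yields precisely the desired uniform bound $m_\C$ on component sizes. One must be slightly careful that the graph $G'$ produced by Lemma~\ref{finitedistinguishingnumber} indeed lies in $\C$ (it does, being an induced subgraph of $G\in\C$) and that its chosen $t$-strong $(\ell_\C,d_\C)$-partition is among those to which the uniqueness of sparsification (and hence the component bound) applies — both of which hold since $t\geq 5\times 2^{\ell_\C}d_\C$. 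Everything else is a direct chaining of the cited lemmas, with no genuinely hard step.

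\begin{proof}
Fix $t = 5\times 2^{\ell_\C}d_\C$, where $\ell_\C$ and $d_\C$ are the constants from Lemma~\ref{finitedistinguishingnumber} (applied with this value of $t$; note $\ell_\C$ and $d_\C$ do not depend on $t$, while $c_\C = c_\C(t)$ does). Since $\C$ lies below the Bell numbers, $|\C^n| < n^{(1+o(1))n}$ fails the condition of Theorem~\ref{paths}, and therefore there exists a constant $m_\C$ such that every $t$-strong $(\ell_\C,d_\C)$-graph $H\in\C$ has the property that every component of its sparsification $\phi(H)$ has order at most $m_\C$. (Here we use that $t\geq 5\times 2^{\ell_\C}d_\C$, so that $\phi(H)$ is well-defined independently of the chosen $t$-strong $(\ell_\C,d_\C)$-partition.)

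Now let $G\in\C$ be arbitrary. By Lemma~\ref{finitedistinguishingnumber}, $G$ contains an induced subgraph $G'$ that is a $t$-strong $(\ell_\C,d_\C)$-graph with $|V(G)\setminus V(G')| < c_\C$. Since $G'\in\C$, the previous paragraph gives that every component of $\phi(G')$ has order at most $m_\C$. Applying Lemma~\ref{kuniformld} to $G'$, we conclude that $G'$ is $k'$-uniform, where
\[
k' \leq m_\C\,\ell_\C^{\,m_\C}\,2^{\binom{m_\C}{2}+1}.
\]
Finally, $G$ is obtained from the $k'$-uniform graph $G'$ by adding back a set $W = V(G)\setminus V(G')$ of fewer than $c_\C$ vertices, so by Lemma~\ref{kuniformadd}, $G$ is $k$-uniform with
\[
k \leq 2^{c_\C}(k'+1) - 1.
\]
This bound on $k$ depends only on $\C$ and not on $G$, so every graph in $\C$ is $k$-uniform for this fixed value of $k$.
\end{proof}
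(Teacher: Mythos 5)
Your proof is correct and follows essentially the same route as the paper's: apply Lemma~\ref{finitedistinguishingnumber} to strip off at most $c_\C$ vertices, use the negation of the condition in Theorem~\ref{paths} to get the uniform component bound $m_\C$, then chain Lemma~\ref{kuniformld} and Lemma~\ref{kuniformadd}. Your explicit care over why $m_\C$ is uniform across all of $\C$ (negating the quantifier in the ``if and only if'') is a point the paper only states informally in the discussion following Theorem~\ref{paths}, so no issues here.
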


\begin{proof}
Let $\C$ be a hereditary class below the Bell numbers, necessarily therefore of finite distinguishing number $k_{\C}$. Given any $G \in {\C}$, by Lemma~\ref{finitedistinguishingnumber} we know that by removing at most $c_\C$ vertices from $G$ we can find a $t$-strong $(\ell_C, d_C)$ induced subgraph $G'$, where $t=5\times 2^{\ell_\C} d_\C$. Fix any $(\ell_C,d_\C)$ partition $\pi$ of $G'$.

By Theorem~\ref{paths} there is an absolute constant $m_{\C}$ depending only on $\C$ such that the sparsification of $G'$ has every component of size at most $m_{\C}$. 
From Lemma~\ref{kuniformld} it follows that $G'$ is a $k'$-uniform graph with $k' = m_{\C} \ell_{\C}^{m_{\C}} 2^{\binom{m_{\C}}{2}+1}$. Since $G$ contains at most $c_\C$ more vertices than $G'$, Lemma~\ref{kuniformadd} tells us that $G$ is $k$-uniform, where $k=2^{c_\C}(k'+1)-1$. 
\end{proof}

\end{document}